\definecolor{linkcol}{rgb}{0,0.2,0.6}
\definecolor{citecol}{rgb}{0,0.4,0}
\definecolor{green}{rgb}{0.67,0.77,0}
\newcounter{thcounter}
\numberwithin{thcounter}{section}
\numberwithin{equation}{section}
\theoremstyle{break}
\newtheorem{lemma}[thcounter]{Lemma}
\newtheorem{proposition}[thcounter]{Proposition}
\newtheorem{theorem}[thcounter]{Theorem}
\newtheorem{corollary}[thcounter]{Corollary}
\newtheorem{definition}[thcounter]{Definition}
\newtheorem{remark}[thcounter]{Remark}
\newtheorem{example}[thcounter]{Example}
\newtheorem{theoremalpha}{Theorem}
\begin{document}

\renewcommand{\bar}{\overline}
\renewcommand{\tilde}{\widetilde}
\renewcommand{\hat}{\widehat}
\makeatletter
\let\@temp\phi
\let\phi\varphi
\let\varphi\@temp

\let\@temp\epsilon
\let\epsilon\varepsilon
\let\varepsilon\@temp
\makeatother

\providecommand\numberthis{\addtocounter{equation}{1}\tag{\theequation}}
\providecommand{\CC}{\mathbb{C}}

\newcommand{\Tr}{\operatorname{Tr}}
\newcommand{\Hom}{\operatorname{Hom}}
\newcommand{\Irr}{\operatorname{Irr}}
\newcommand{\onb}{\operatorname{onb}}
\newcommand{\mult}{\operatorname{mult}}
\newcommand{\I}{\mathds{1}}
\newcommand{\id}{\mathrm{id}}
\newcommand{\End}{\operatorname{End}}
\newcommand{\Hilb}{\operatorname{Hilb}}
\newcommand{\HilbBimod}{\operatorname{HilbBimod}}
\newcommand{\Rep}{\operatorname{Rep}}
\newcommand{\tensorhat}{\mathbin{\hat{\otimes}}}
\newcommand{\emptyword}{\diameter}
\newcommand{\catid}{\mathbf{1}}
\newcommand{\qdim}{\mathrm{d}}

\begin{center}
    {\LARGE\boldmath\bf Some remarks on free products of rigid $C^*$-2-categories}

    \bigskip

    {\sc Matthias Valvekens\footnotemark}
    
    \bigskip
    
    {\small\sc KU~Leuven, Department of Mathematics\\ Celestijnenlaan 200B, B-3001 Leuven, Belgium}
\end{center}
\footnotetext{Email address: \texttt{maths@mvalvekens.be}.}

\begin{abstract}
    In this informal expository note, we present a universal, formulaic construction of the free product of rigid $C^*$-2-categories.
    This construction allows for a straightforward, purely categorical formulation of the free composition of subfactors and planar algebras considered by Bisch and Jones \cite{bisch-jones-interm-subfactors,jones-planar-algebras}.
    As an application, we explain the results of \cite{tarrago-wahl} on free wreath products of compact quantum groups in this categorical language.
\end{abstract}

\section{Introduction}

First studied in \cite{bisch-jones-fuss-catalan,bisch-jones-interm-subfactors}, the free composition of subfactors $N\subset M$ and $M\subset P$, along with the (closely related) free product of rigid $C^*$-tensor categories appear frequently in the literature on subfactors and quantum symmetries.

Given two rigid $C^*$-tensor categories $\mathcal{C}_1$ and $\mathcal{C}_2$, the free product is the rigid $C^*$-tensor category containing $\mathcal{C}_1$ and $\mathcal{C}_2$ with ``minimal'' relations.
Concretely, this is often stated as follows\footnote{Compare with the definition of the free composition of subfactors in \cite{bisch-jones-interm-subfactors}.}.
If $\mathcal{C}_1$ and $\mathcal{C}_2$ are given as subcategories of a third rigid $C^*$-tensor category $\mathcal{C}$, we say that $\mathcal{C}_1$ and $\mathcal{C}_2$ are \textit{free} inside $\mathcal{C}$ if any alternating tensor product of nontrivial irreducible objects in $\mathcal{C}_1$ and $\mathcal{C}_2$ remains irreducible.
If such irreducibles generate all of $\mathcal{C}$, we say that $\mathcal{C}$ is a free product of $\mathcal{C}_1$ and $\mathcal{C}_2$.
This ``working definition'' is sufficient for many purposes, but immediately raises two questions:
\begin{enumerate}[(i)]
    \item Does such a category $\mathcal{C}$ always exist?
    \item If so, in what sense is it unique?
\end{enumerate}
This also applies to the original definition given in \cite{bisch-jones-interm-subfactors} of the free composition of subfactors---see \cite[p.~94]{bisch-jones-interm-subfactors} in particular.
To some degree, these questions have been addressed and answered in the literature.
\begin{itemize}
    \item If for $i\in\{1,2\}$, $\mathcal{C}_i$ is the category of finite-dimensional unitary representations of a compact quantum group $\mathbb{G}_i$, then the free product of $\mathcal{C}_1$ and $\mathcal{C}_2$ is concretely given as the representation category of the free product compact quantum group $\mathbb{G}_1*\mathbb{G}_2$ in the sense of \cite{wang-free-products}.
    \item More generally, \cite{ghosh-jones-reddy} gives a diagrammatic construction of the free product of any two rigid $C^*$-tensor categories.
    \item The free composition of subfactors has been recast in planar algebra language in \cite{jones-planar-algebras}; see also \cite{tarrago-wahl}.
    \item In the formulation of standard invariants as paragroups, the free composition was defined by Gnerre in \cite{gnerre-thesis}.
\end{itemize}

In this note, we unify the above in a common framework by providing a construction for arbitrary free products of rigid $C^*$-2-categories.
While the core ideas are by no means new, we believe that our exposition is complementary to the material on free products published in the literature so far, for the following reasons.
\begin{itemize}
    \item Our free product construction comes with a universal property that guarantees both existence and uniqueness in the usual way.
    \item Our approach is entirely formulaic, and does not require a diagram calculus a priori.
    In particular, it is painless to generalise to free products with an arbitrary number of factors.
    Of course, the universal property guarantees that the diagram calculus of \cite{ghosh-jones-reddy} remains valid, but it is not part of the construction.
    \item The framework of rigid $C^*$-2-categories is flexible enough to also encode the free product of subfactor planar algebras.
    While closely related to the free product of rigid $C^*$-tensor categories, it is not quite the same, and the precise difference is immediately apparent in 2-category language.
    We use this to provide some categorical clarification with regard to the results of \cite{tarrago-wahl} on free wreath products of compact quantum groups.
\end{itemize}

Concretely, the goal of this note is to give a proof of the following theorem.
\begin{theoremalpha}
\label{thm:free-product-2cat-up}
Let $(\mathcal{C}_i)_{i\in I}$ be a family of rigid $C^*$-2-categories, and let $S$ be a set together with injections $f_i: S\to B(\mathcal{C}_i)$.
Then there exists a rigid $C^*$-2-category $\mathcal{C}$ together with unitary 2-embeddings $(\phi_i, \Phi_i):\mathcal{C}_i\to \mathcal{C}$ that satisfy $\phi_if_i=\phi_jf_j$ for all $i,j\in I$, and are universal in the following sense.

Given any rigid $C^*$-2-category $\mathcal{D}$ and unitary 2-functors $(\psi_i,\Psi_i):\mathcal{C}_i\to \mathcal{D}$ with the property that $\psi_if_i=\psi_jf_j$ for all $i,j\in I$, there exists a unitary 2-functor $(\psi,\Psi):\mathcal{C}\to\mathcal{D}$ such that $(\psi \phi_i,\Psi\Phi_i)\cong (\psi_i,\Psi_i)$ as unitary 2-functors.
If $(\psi',\Psi'):\mathcal{C}\to\mathcal{D}$ is another unitary 2-functor with the same properties, then $(\psi,\Psi)\cong (\psi',\Psi')$ as unitary 2-functors.
\end{theoremalpha}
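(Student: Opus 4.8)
The plan is to construct $\mathcal{C}$ explicitly as a $C^*$-$2$-category of reduced words in the $\mathcal{C}_i$ and then read off the universal property. First I would take the $0$-cells of $\mathcal{C}$ to be the amalgam $B(\mathcal{C}):=\bigl(\bigsqcup_{i\in I}B(\mathcal{C}_i)\bigr)/{\sim}$, where $f_i(s)\sim f_j(s)$ for all $i,j\in I$ and $s\in S$; write $[a]$ for the class of $a\in B(\mathcal{C}_i)$, and note that injectivity of the $f_i$ guarantees that $[a]=[a']$ with $a,a'$ in the same factor forces $a=a'$. A $1$-cell of $\mathcal{C}$ is then a \emph{reduced word} $w=X_1\cdots X_n$ ($n\ge 0$) in which each letter $X_k$ is a non-identity $1$-cell of some $\mathcal{C}_{i_k}$, consecutive letters lie in distinct factors, and the sources and targets match up in $B(\mathcal{C})$ — which, since $i_k\neq i_{k+1}$, forces each intermediate $0$-cell to be a glued point $f_{i_k}(s)=f_{i_{k+1}}(s)$; the empty word $\emptyword$ is the identity $1$-cell. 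For two such words I declare $\Hom_{\mathcal{C}}(w,w'):=0$ unless they have equal length, the same sequence of factor indices, and matching sources and targets of corresponding letters, in which case $\Hom_{\mathcal{C}}(w,w'):=\bigotimes_k\Hom_{\mathcal{C}_{i_k}}(X_k,X_k')$ (an algebraic tensor product), with vertical composition and the involution performed letterwise. Horizontal composition of $1$-cells is concatenation followed by reduction — compose adjacent letters lying in the same factor, discard any resulting identity letters, and repeat — and horizontal composition of $2$-cells is the tensor product of the components, invoking horizontal composition in the relevant $\mathcal{C}_{i_k}$ at each merged junction. Finally, the associativity and unit $2$-isomorphisms, the $C^*$-category structure on the $2$-cell spaces, and the duality data — with $\overline{X_1\cdots X_n}:=\overline{X_n}\cdots\overline{X_1}$ and evaluation and coevaluation built up letter by letter — are all imported from the $\mathcal{C}_i$.

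Next I would verify that this defines a rigid $C^*$-$2$-category. The $C^*$ and rigidity axioms are local and hold letterwise: $\End_{\mathcal{C}}(w)\cong\bigotimes_k\End_{\mathcal{C}_{i_k}}(X_k)$ is a finite tensor product of $C^*$-algebras, hence carries a unique $C^*$-norm and a well-behaved positive cone; $*$-functoriality of horizontal composition holds because at each junction it is built from $*$-functors; and the conjugate equations reduce, via the zig-zag for a composite, to the conjugate equations in the factors. Since a rigid $C^*$-$2$-category is required to be closed under finite direct sums and subobjects, I would then pass to the Cauchy completion, formed locally on each Hom-category by adjoining direct sums and splitting projections, which is again rigid and $C^*$; call the result $\mathcal{C}$. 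For the embeddings I would define $(\phi_i,\Phi_i):\mathcal{C}_i\to\mathcal{C}$ on $0$-cells by $a\mapsto[a]$, on $1$-cells by sending a non-identity $1$-cell $X$ to the one-letter word $(X)$ and an identity $1$-cell to $\emptyword$, and on $2$-cells by the tautological identification $\Hom_{\mathcal{C}_i}(X,Y)=\Hom_{\mathcal{C}}\bigl((X),(Y)\bigr)$; the structure $2$-cells $\Phi_i$ are unitors of $\mathcal{C}$, trivial away from the places where identity letters are created or removed. Since $\phi_i$ is bijective on the relevant $2$-cell spaces, $(\phi_i,\Phi_i)$ is a unitary $2$-embedding, and $\phi_if_i=\phi_jf_j$ holds on the nose because both send $s\in S$ to $[f_i(s)]=[f_j(s)]$.

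It remains to establish the universal property. Given $\mathcal{D}$ and unitary $2$-functors $(\psi_i,\Psi_i):\mathcal{C}_i\to\mathcal{D}$ with $\psi_if_i=\psi_jf_j$, I would set $\psi([a]):=\psi_i(a)$ for $a\in B(\mathcal{C}_i)$ (well defined precisely because $\psi_if_i=\psi_jf_j$), $\psi(X_1\cdots X_n):=\psi_{i_1}(X_1)\circ\cdots\circ\psi_{i_n}(X_n)$ (these composites being legal since the consecutive $0$-cell constraints match up under the same identity $\psi_{i_k}f_{i_k}=\psi_{i_{k+1}}f_{i_{k+1}}$), and on $2$-cells apply $\psi_{i_k}$ to each tensor factor and compose horizontally in $\mathcal{D}$; the structure $2$-cells $\Psi$ are assembled from the $\Psi_i$ and the coherence isomorphisms of $\mathcal{D}$, and $(\psi,\Psi)$ extends along the Cauchy completion by its universal property. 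That $(\psi,\Psi)$ is a unitary $2$-functor with $(\psi\phi_i,\Psi\Phi_i)\cong(\psi_i,\Psi_i)$ is then a matter of unwinding definitions — the comparison is an equality on $0$-cells and on non-identity $1$-cells, and the $\Psi_i$ absorb the identity-$1$-cell cases. For uniqueness, I would invoke the fact that the images of the $(\phi_i,\Phi_i)$ generate $\mathcal{C}$ under horizontal composition, direct sums and subobjects: any two unitary $2$-functors out of $\mathcal{C}$ that become unitarily isomorphic after restriction along every $\phi_i$ are themselves unitarily isomorphic, since the comparison can be built on the generators and propagated. This both pins down $(\psi,\Psi)$ up to unitary isomorphism and yields the final uniqueness clause.

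I expect the genuinely laborious step to be the coherence of the word model: showing that concatenate-and-reduce is associative up to the canonical associator and unitor $2$-isomorphisms inherited from the factors — a Church--Rosser argument of the same flavour as the solution of the word problem for amalgamated free products of small categories — and that horizontal and vertical composition of $2$-cells interchange, together with the attendant bookkeeping of all the structure $2$-cells $\Phi_i$, $\Psi$ and of the pseudonatural isomorphisms witnessing $(\psi\phi_i,\Psi\Phi_i)\cong(\psi_i,\Psi_i)$ and the uniqueness statement. By contrast, everything involving the $C^*$-structure and rigidity is inherited factor by factor, with positivity coming from the fact that a finite tensor product of $C^*$-algebras is again a $C^*$-algebra, and should go through routinely.
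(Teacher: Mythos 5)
The construction breaks down at the very first step, in your declaration of the $2$-cell spaces. You allow the letters $X_k$ to be arbitrary non-identity $1$-cells of $\mathcal{C}_{i_k}$, and then set $\Hom_{\mathcal{C}}(w,w')=0$ whenever $w,w'$ have different lengths. But a non-identity $1$-cell can contain the tensor unit as a subobject, and then the free product genuinely has nonzero morphisms between words of different lengths. Concretely, take $X=\epsilon_{i}\oplus\alpha$ with $\alpha$ a nontrivial irreducible of $\mathcal{C}_i$: in any category receiving $\mathcal{C}_i$ fully faithfully one must have $(\Phi_i(X),\Phi_i(\epsilon_i))\cong(X,\epsilon_i)\neq 0$, whereas your rule gives $\Hom_{\mathcal{C}}((X),\emptyword)=0$; so your $\Phi_i$ is not fully faithful, and in fact it has no consistent value on $2$-cells such as the coevaluation $s_\alpha:\epsilon_i\to\alpha\bar{\alpha}$, whose target is a one-letter word. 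The same problem kills rigidity of the word model itself: for a two-letter word $w=X_1X_2$, the conjugate equations require a $2$-cell $\emptyword\to w\bar{w}$, and $w\bar{w}$ reduces to a word of length $\geq 3$, so the required morphism lives in a space you have declared to be zero. Passing to the Cauchy completion cannot repair this, because completion never changes Hom-spaces between pre-existing objects; instead it produces, e.g., a subobject of $(\epsilon_i\oplus\alpha)$ that "is" the unit but is not isomorphic to $\emptyword$, so the completed category is neither rigid nor the free product, and the universal $2$-functor $\Psi$ you build on it cannot be well defined on all $2$-cells (it would have to send nonzero morphisms of $\mathcal{D}$-valued images to morphisms factoring through zero spaces).

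The freeness dichotomy you are implicitly invoking ("Homs are letterwise or zero") is only valid for alternating words whose letters are \emph{nontrivial irreducible} $1$-cells. If you restrict your letters accordingly, the Hom declaration becomes correct, but then concatenate-and-reduce no longer stays inside your class of words: the product of two irreducibles in the same factor is generally reducible, so horizontal composition must be defined through decompositions into irreducibles, i.e.\ through the full fusion data, and all of the coherence you deferred to a "Church--Rosser argument" becomes the actual mathematical content. This is precisely the difficulty the paper sidesteps: there, words are never multiplied combinatorially; each word is realised as a module endofunctor of the category $\mathcal{X}$ of Hilbert spaces graded by reduced irreducible words (so composition of $1$-cells is literal composition of functors and is strictly associative), the $2$-cells are the module natural transformations, and the universal property is proved by an explicit assembly map built by induction on word length. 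To salvage your approach you would either have to carry out the fusion-theoretic bookkeeping for a genuine word model with irreducible letters, or switch to a realisation of the words as operators on some auxiliary category, as the paper does.
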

The relevant terminology and notation will be introduced below.
Applied to rigid $C^*$-tensor categories in particular, the statement simplifies as follows.
\begin{theoremalpha}
\label{thm:free-product-up}
Let $(\mathcal{C}_i)_{i\in I}$ be a family of rigid $C^*$-tensor categories.
Then there exists a rigid $C^*$-tensor category $\mathcal{C}$ together with fully faithful unitary tensor functors $\Phi_i:\mathcal{C}_i\to \mathcal{C}$ that are universal in the following sense.

Given any rigid $C^*$-tensor category $\mathcal{D}$ and unitary tensor functors $\Psi_i:\mathcal{C}_i\to\mathcal{D}$, there exists a unitary tensor functor $\Psi:\mathcal{C}\to \mathcal{D}$ such that $\Psi\Phi_i\cong \Psi_i$ as unitary tensor functors.
If $\Psi':\mathcal{C}\to\mathcal{D}$ is another unitary tensor functor with the same property, then $\Psi\cong \Psi'$ as unitary tensor functors.
\end{theoremalpha}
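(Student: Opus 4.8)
The plan is to deduce Theorem~\ref{thm:free-product-up} from Theorem~\ref{thm:free-product-2cat-up} by specialising to $2$-categories with a single $0$-cell. The first step is to spell out the standard dictionary: a rigid $C^*$-tensor category is precisely a rigid $C^*$-$2$-category $\mathcal{C}$ with $|B(\mathcal{C})|=1$, with the tensor product given by $1$-cell composition and the tensor unit by the identity $1$-cell; under this identification a unitary tensor functor is exactly a unitary $2$-functor between one-object rigid $C^*$-$2$-categories, a fully faithful unitary tensor functor is exactly a unitary $2$-embedding, and a unitary monoidal natural (iso)transformation is exactly a $2$-natural (iso)transformation, so that ``$\cong$ as unitary tensor functors'' and ``$\cong$ as unitary $2$-functors'' coincide.

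Second, given a family $(\mathcal{C}_i)_{i\in I}$ of rigid $C^*$-tensor categories, I would view each $\mathcal{C}_i$ as a one-object rigid $C^*$-$2$-category, take $S$ to be a one-point set, and let $f_i\colon S\to B(\mathcal{C}_i)$ be the forced injection. Applying Theorem~\ref{thm:free-product-2cat-up} produces a rigid $C^*$-$2$-category $\mathcal{C}$ with unitary $2$-embeddings $(\phi_i,\Phi_i)\colon\mathcal{C}_i\to\mathcal{C}$ satisfying $\phi_if_i=\phi_jf_j$. One then checks—either abstractly or by inspecting the construction underlying Theorem~\ref{thm:free-product-2cat-up}—that $\mathcal{C}$ has a single $0$-cell: the relations $\phi_if_i=\phi_jf_j$ force every $0$-cell in the image of some $\phi_i$ to equal the common image of $S$, and the free product is generated by the images of the $\mathcal{C}_i$, so $B(\mathcal{C})$ is a singleton. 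Hence $\mathcal{C}$ is a rigid $C^*$-tensor category and the $\Phi_i$ are fully faithful unitary tensor functors, giving the existence half of the statement.

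Third, for the universal property, let $\mathcal{D}$ be a rigid $C^*$-tensor category and $\Psi_i\colon\mathcal{C}_i\to\mathcal{D}$ unitary tensor functors. Regarding $\mathcal{D}$ as a one-object rigid $C^*$-$2$-category and the $\Psi_i$ as unitary $2$-functors $(\psi_i,\Psi_i)$, the compatibility $\psi_if_i=\psi_jf_j$ holds automatically because $B(\mathcal{D})$ is a singleton—this is exactly why Theorem~\ref{thm:free-product-up} carries no hypothesis analogous to $\psi_if_i=\psi_jf_j$. Theorem~\ref{thm:free-product-2cat-up} then yields a unitary $2$-functor $(\psi,\Psi)\colon\mathcal{C}\to\mathcal{D}$ with $(\psi\phi_i,\Psi\Phi_i)\cong(\psi_i,\Psi_i)$, and since both $\mathcal{C}$ and $\mathcal{D}$ have one $0$-cell, $(\psi,\Psi)$ is the data of a unitary tensor functor $\Psi$ with $\Psi\Phi_i\cong\Psi_i$. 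Uniqueness of such a $\Psi$ up to isomorphism transports in the same way from the uniqueness clause of Theorem~\ref{thm:free-product-2cat-up}.

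The only real obstacle I anticipate is bookkeeping rather than mathematics: one must verify that the construction proving Theorem~\ref{thm:free-product-2cat-up} genuinely returns a $2$-category with exactly one $0$-cell in this degenerate case—rather than, say, one $0$-cell per factor glued along $S$ in a manner that might fail to collapse—and that the definition of unitary $2$-embedding specialises correctly to fully faithfulness in the one-object setting. Both points are immediate once the construction behind Theorem~\ref{thm:free-product-2cat-up} is available, so modulo that theorem the proof of Theorem~\ref{thm:free-product-up} is essentially a translation exercise.
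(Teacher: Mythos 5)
Your proposal is correct and matches the paper's (implicit) argument: Theorem~\ref{thm:free-product-up} is obtained precisely by regarding each $\mathcal{C}_i$ and $\mathcal{D}$ as a one-object rigid $C^*$-2-category, taking $S$ a singleton with the forced injections (so that $B=\bigsqcup_i B(\mathcal{C}_i)/\sim$ collapses to a point and the compatibility condition on the $\Psi_i$ becomes vacuous), and translating 2-embeddings and isomorphisms of unitary 2-functors into fully faithful unitary tensor functors and monoidal natural isomorphisms. The paper treats this as an immediate specialisation of Theorem~\ref{thm:free-product-2cat-up}, exactly as you do.
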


The construction we present below is only one of several possible variations.
In the (formally simpler) case of rigid $C^*$-tensor categories, the rough idea is the following.
We know a posteriori that, given some family of rigid $C^*$-tensor categories $(\mathcal{C}_i)_{i\in I}$, the irreducible objects in the free product should be labelled by alternating words in the irreducibles of the factors.
This means that, as a $C^*$-category, the free product is essentially the same as the category $\mathcal{X}$ of finite-dimensional Hilbert spaces graded over these words.
One could of course attempt to define a tensor product on $\mathcal{X}$ directly, but this is notationally rather difficult.
Instead, we use the fact that $\mathcal{X}$ is a unitary $\mathcal{C}_i$-$\mathcal{C}_j$-bimodule category for all $i,j\in I$ in a canonical way.
The free product is then given by an appropriate category $\mathcal{C}$ of endofunctors of $\mathcal{X}$, that is in some sense generated by the copies of $\mathcal{C}_i$ acting from the left.
This way of working ensures that virtually all properties of $\mathcal{C}$ can be verified quite straightforwardly using induction arguments, essentially ``dealing with one letter at a time''.
The same idea applies (mutatis mutandis) to rigid $C^*$-2-categories.

\subsection*{Acknowledgement}

The author is indebted to Jonas Wahl for numerous fruitful discussions and valuable comments on earlier versions of this note.

\section{Definitions and notation}

\subsection{Rigid \texorpdfstring{$C^*$}{C*}-tensor 2-categories and unitary 2-functors}

The principal objects of study are rigid $C^*$-2-categories, as discussed in \cite{longo-roberts}.
Rigid $C^*$-2-categories are to rigid $C^*$-tensor categories as groupoids are to groups.
A rigid $C^*$-2-category $\mathcal{C}$ comes with a collection of 0-cells $B(\mathcal{C})$, which we always assume to be small (i.e.\@ a set).
For $a,b\in B(\mathcal{C})$, the category of 1-cells from $b$ to $a$ is a $C^*$-category denoted by $\mathcal{C}_{ab}$.
Morphisms in $\mathcal{C}_{ab}$ are called 2-cells, and the collection of 2-cells from $\alpha$ to $\alpha'$ in $\mathcal{C}_{ab}$ will be denoted by $(\alpha',\alpha)$.
The composition of $\alpha\in\mathcal{C}_{ab}$ and $\beta\in\mathcal{C}_{bd}$ is denoted by $\alpha\otimes\beta$ or simply $\alpha\beta$ if there is no danger of confusion.
By analogy with $C^*$-tensor categories, composition of 1-cells is only required to be associative up to a natural unitary isomorphism satisfying the usual pentagon relation.
Similarly, for all $a\in B(\mathcal{C})$, the category of 1-cells $\mathcal{C}_{aa}$ has a distinguished object $\epsilon_a$, acting as a local tensor unit.
We usually---except in the last section---assume that the tensor units $\epsilon_a$ are simple objects, and identify their endomorphism algebras with the complex numbers.
See \cite[\S~7]{longo-roberts} for further discussion.

The adjective ``rigid'' refers to the fact that every 1-cell $\alpha\in\mathcal{C}_{ab}$ has an essentially unique conjugate $\bar{\alpha}\in\mathcal{C}_{ba}$.
There additionally exist maps $s_\alpha:\epsilon_a\to\alpha\bar{\alpha}$ and $t_\alpha:\epsilon_b\to\bar{\alpha}\alpha$ such that
\[
    (s_\alpha^*\otimes\I)(\I\otimes t_\alpha) = \I
    \qquad\text{and}\qquad (t_\alpha^*\otimes\I)(\I\otimes s_\alpha) = \I\,.
\]
Such a pair is referred to as a \textit{solution to the conjugate equations} for $\alpha$.
If moreover $s_\alpha^*(T\otimes\I) s_\alpha=t_\alpha^*(\I\otimes T)t_\alpha$ for all endomorphisms $T$ of $\alpha$, then we say that the solution is \textit{standard}.
In this case, the functional $\Tr_\alpha(T)= s_\alpha^*(T\otimes\I) s_\alpha$ is tracial, independent of the choice of standard solution and referred to as the \textit{categorical trace} on $(\alpha,\alpha)$.
The quantity $\Tr_\alpha(\I)=s_\alpha^*s_\alpha=t_\alpha^*t_\alpha$ is called the \textit{quantum dimension} of $\alpha$, and denoted by $\qdim(\alpha)$

The existence of such conjugate objects forces $\mathcal{C}_{ab}$ to be semisimple for all $a,b\in B(\mathcal{C})$.
Moreover, all 2-cell spaces $(\alpha,\beta)$ for $\alpha,\beta\in\mathcal{C}_{ab}$ are finite-dimensional, and come equipped with a natural inner product given by
\[
    \langle T, S\rangle = \Tr_{\alpha}(TS^*) = \Tr_\beta(S^*T)
\]
for $T,S \in(\alpha,\beta)$.
In this way, all 2-cell spaces in $\mathcal{C}$ are finite-dimensional Hilbert spaces.
Throughout, we always assume that a full set of representatives of irreducible objects in $\mathcal{C}_{ab}$ has been chosen, and we denote it by $\Irr(\mathcal{C}_{ab})$.
We do not typically distinguish between classes of irreducible objects and their representatives.

Given $C^*$-2-categories $\mathcal{C}$ and $\mathcal{D}$, a \textit{unitary 2-functor} from $\mathcal{C}$ to $\mathcal{D}$ is a tuple $(f,F,F^{(2)},F^{(0)})$ consisting of 
\begin{itemize}
    \item a function $f:B(\mathcal{C})\to B(\mathcal{D})$;
    \item unitary functors $F_{ab}: \mathcal{C}_{ab}\to \mathcal{D}_{f(a)f(b)}$ for all $a,b\in B(\mathcal{C})$;
    \item natural unitaries $F^{(2)}_{\alpha,\beta}: F_{ab}(\alpha)F_{bc}(\beta)\to F_{ac}(\alpha\beta)$ for $a,b,c\in B(\mathcal{C})$, $\alpha\in\mathcal{C}_{ab}$, $\beta\in\mathcal{C}_{bc}$;
    \item unitaries $F^{(0)}_a:F(\epsilon_a)\to \epsilon_{f(a)}$ for $a\in B(\mathcal{C})$.
\end{itemize}
We often suppress the subscripts when they are clear from context.
Denoting the associators in $\mathcal{C}$ and $\mathcal{D}$ by $a$ and $a'$, respectively, we require
\[
    \begin{tikzcd}
    (F(\alpha)\otimes F(\beta))\otimes F(\gamma) \arrow[rr, "a_{F(\alpha),F(\beta),F(\gamma)}'"] \arrow[d, "F^{(2)}_{\alpha,\beta}\otimes\I_{F(\gamma)}"] & & F(\alpha)\otimes (F(\beta)\otimes F(\gamma)) \arrow[d, "\I_{F(\alpha)}\otimes F^{(2)}_{\beta,\gamma}"]\\
    F(\alpha\otimes\beta)\otimes F(\gamma)\arrow[d, "F^{(2)}_{\alpha\otimes\beta,\gamma}"] & & F(\alpha)\otimes F(\beta\otimes \gamma)\arrow[d,"F^{(2)}_{\alpha,\beta\otimes\gamma}"] \\
    F((\alpha\otimes\beta)\otimes\gamma) \arrow[rr, "F(a_{\alpha,\beta,\gamma})"] & & F(\alpha\otimes(\beta\otimes\gamma))
    \end{tikzcd}
\]
to commute for all composable 1-cells $\alpha,\beta,\gamma$ in $\mathcal{C}$, and similarly for $F^{(0)}$.
When $F^{(2)}$ and $F^{(0)}$ are equal to the identity everywhere, we say that $F$ is a \textit{strict} unitary 2-functor.

We usually simply refer to the entire tuple $(f,F,F^{(2)},F^{(0)})$ as $(f,F)$.
When the function $f$ is clear from context (especially when it is the identity map), we simply write $F$.
Given unitary 2-functors $(f,F)$ and $(f',F')$, we write $(f,F)\cong (f',F')$ whenever $f=f'$ and there exist unitary natural transformations
\[
    \eta: (F:\mathcal{C}_{ab}\to \mathcal{D}_{f(a)f(b)})\to (F':\mathcal{C}_{ab}\to \mathcal{D}_{f(a)f(b)})
\]
for all $a,b\in B(\mathcal{C})$, that are monoidal in the sense that $\eta F^{(2)}=(F')^{(2)}(\eta\otimes\eta)$.
We say that $(f,F)$ is a unitary \textit{2-embedding} if $f$ is injective and $F$ is fully faithful everywhere.
A sub-2-category $\mathcal{C}'\subset\mathcal{C}$ is \textit{2-full} if $\mathcal{C}'$ is closed under composition and conjugation of 1-cells (i.e.\@ tensor products) and $\mathcal{C}'_{ab}\subset\mathcal{C}_{ab}$ is a full, isomorphism-closed inclusion of $C^*$-categories for all $a,b\in B(\mathcal{C}')$---in particular, $\mathcal{C}'_{ab}$ is closed under direct sums and subobjects.

\subsection{Pointed \texorpdfstring{$C^*$}{C*}-2-categories}
\label{sec:pointed-cats}

Pointed $C^*$-2-categories are essentially the same as subfactor planar algebras and $\lambda$-lattices, considered from a more categorical point of view (see \cite[Remark~2.1]{arano-vaes}).
This formalism will be useful to establish the concrete link with the free composition of Bisch and Jones \cite{bisch-jones-interm-subfactors}, and with the results of Tarrago and Wahl on free wreath products \cite{tarrago-wahl}.
We revisit this point in the last section.
\begin{definition}
    A \textit{pointed} $C^*$\textit{-2-category} is a tuple $\mathcal{P}=(\mathcal{C},a,b,u)$, where 
    \begin{itemize}
    \item $\mathcal{C}$ is a rigid $C^*$-2-category with two distinct 0-cells given by $\mathcal{B}(\mathcal{C})=\{a,b\}$;
    \item $u$ is an object in $\mathcal{C}_{ab}$.
    \end{itemize}
    We say that $\mathcal{P}$ is \textit{nondegenerate} if every irreducible object in $\mathcal{C}_{aa}$ appears as a subobject of some tensor power of $u\bar{u}$.
\end{definition}
The natural morphisms between pointed 2-categories are unitary 2-functors preserving the distinguished object.
This can be made precise as follows.
\begin{definition}
    A \textit{morphism} of pointed $C^*$-2-categories from $(\mathcal{C},a,b,u)$ to $(\mathcal{C}',a',b',u')$ is a unitary 2-functor $(f,F):\mathcal{C}\to \mathcal{C}'$ such that $f(a)=a'$ and $f(b)=b'$, together with a unitary $p: F(u)\to u'$.
    Since the map $f$ is fixed and hence not part of the data, we suppress it.
    The composition of $(F,p)$ with another such morphism $(F',p'): (\mathcal{C}',a',b',u')\to (\mathcal{C}'',a'',b'',u'')$ is defined by $(F'F,p'F'(p))$.
    Hence, $(F,p)$ is an isomorphism if and only if $F$ is a unitary 2-equivalence.

    Given two such morphisms $(F_1,p_1):(\mathcal{C}_1,a_1,b_1,u_1)\to (\mathcal{C},a,b,u)$ and $(F_2, p_2):(\mathcal{C}_2,a_2,b_2,u_2)\to (\mathcal{C},a,b,u)$, we say that $F_1$ and $F_2$ are \textit{conjugate} if there exists an isomorphism $(G,p):(\mathcal{C}_1,a_1,b_1,u_1)\to (\mathcal{C}_2, a_2, b_2, u_2)$ together with an isomorphism of unitary 2-functors $\eta:F_2G \to F_1$ such that the diagram
    \[
        \begin{tikzcd}
        F_2G(u_1) \arrow[d, "\eta_{u_1}"] \arrow[r, "F_2(p)"] & F_2(u_2) \arrow[d, "p_2"]\\
        F_1(u_1)\arrow[r, "p_1"] & u
        \end{tikzcd}
    \]
    is commutative.
\end{definition}
We say that a morphism of pointed $C^*$-2-categories is \textit{dimension-preserving} if the underlying 2-functor is.
As explained in \cite[Remark~2.1]{arano-vaes}, a subfactor planar algebra encodes the same data as a generating standard simple $Q$-system in a rigid $C^*$-tensor category, which is in turn essentially the same as a nondegenerate pointed $C^*$-2-category.

\begin{example}
\label{exa:subfactor}
    Let $N\subset M$ be a finite-index inclusion of factors of type $\mathrm{II}_1$.
    One can associate a pointed $C^*$-2-category to $N\subset M$ as follows.
    Put $u=L^2(M)$, viewed as a Hilbert $N$-$M$-bimodule.
    Then define $\mathcal{C}_{N--N}$ as the category of Hilbert $N$-$N$-bimodules generated by taking subobjects and tensor powers of $u\bar{u}=u\mathbin{\overline{\otimes}_M} \bar{u}$.
    Note that $u\bar{u}\cong L^2(M)$ as Hilbert $N$-$N$-bimodules.
    Similarly, define $\mathcal{C}_{M--M}$ as the $C^*$-tensor category of Hilbert $M$-$M$-bimodules generated by $\bar{u}u=\bar{u}\mathbin{\overline{\otimes}_N} u$.
    If $N\subset M\subset M_1$ is the result of Jones' basic construction applied to $N\subset M$, then $\bar{u}u\cong L^2(M_1)$ as Hilbert $M$-$M$-bimodules.

    Finally, define $\mathcal{C}_{N--M}$ and $\mathcal{C}_{M--N}$ as the categories generated by taking subobjects of $u(\bar{u}u)^{\otimes n}$ (resp.\@ $\bar{u}(u\bar{u})^{\otimes n}$).
    Then $\mathcal{P}_{N\subset M}=(\mathcal{C}, N,M, u)$ is a pointed $C^*$-2-category.
    By Popa's reconstruction theorem from \cite{popa-lambda-lattices} and \cite[Remark~2.1]{arano-vaes}, every pointed $C^*$-2-category is isomorphic to $\mathcal{P}_{N\subset M}$ for some extremal finite-index subfactor $N\subset M$.
\end{example}

\begin{example}
\label{exa:pointed-tlj}
    The Temperley--Lieb--Jones (TLJ) planar algebra can be realised as a nondegenerate pointed $C^*$-2-category as follows.
    Fix $\delta\geq 2$ and $\tau=\pm\I$, and let $\mathcal{TLJ}_\delta$ be the Temperley--Lieb--Jones rigid $C^*$-tensor category with parameter $\delta$.
    That is to say, $\mathcal{TLJ}_{\delta}$ is generated by a self-conjugate irreducible object $u$, and there is a (standard) solution to the conjugate equations for $u$ given by a single morphism $R\in (uu,\epsilon)$ satisfying
    \[
        (R^*\otimes\I)(\I\otimes R) = \I
        \qquad R^*R = \qdim(u) = \delta\,.
    \]
    This intertwiner generates all morphisms between higher tensor powers of $u$, in the sense that any morphism from $u^{\otimes n}$ to $u^{\otimes m}$ can be written by composing morphisms of the form $\I^{\otimes i}\otimes R\otimes\I^{\otimes j}$ and their adjoints.
    Equivalently, the intertwiner spaces between higher tensor powers of $u$ are given by corners of the universal Temperley--Lieb $C^*$-algebra (see e.g.\@ \cite[\S~2.5]{neshveyev-tuset} for details).
    Now define $\mathcal{TLJ}_\delta^{\mathrm{even}}$ (resp.\@ $\mathcal{TLJ}_\delta^{\mathrm{odd}}$) as the full rigid $C^*$-tensor subcategory consisting of all objects in $\mathcal{TLJ}_\delta$ that embed into a direct sum of even (resp.\@ odd) tensor powers of $u$.

    Now put $\mathcal{C}_{aa}=\mathcal{C}_{bb}=\mathcal{TLJ}_\delta^{\mathrm{even}}$ and $\mathcal{C}_{ab}=\mathcal{C}_{ba}=\mathcal{TLJ}_\delta^{\mathrm{odd}}$.
    Then $(\mathcal{C},a,b,u)$ is a pointed $C^*$-2-category, which we refer to as the pointed TLJ 2-category with parameter $\delta$.
    These pointed $C^*$-2-categories are universal in a strong sense.
    Given any pointed rigid $C^*$-2-category $(\mathcal{D},a,b,v)$ with $\qdim(v)=\delta$, there exists an essentially unique morphism $\Phi$ from $(\mathcal{C},a,b,u)$ to $(\mathcal{D},a,b,v)$ that can be described as follows.

    Let $t_v: \epsilon_b\to \bar{v}v$ and $s_v: \epsilon_a\to v\bar{v}$ be a standard solution to the conjugate equations for $v$, and put $\Phi(u)=v$.
    The morphism $R:\epsilon\to uu$ is then mapped to $s_v$ when viewed as a 2-cell in $\mathcal{C}_{aa}$, and to $t_v$ when viewed as a 2-cell in $\mathcal{C}_{bb}$.
    The universal property of the Temperley--Lieb $C^*$-algebra \cite[see][p.~69]{neshveyev-tuset} ensures that this gives rise to a unitary 2-functor.
    See also \cite[Theorem~2.5.3]{neshveyev-tuset} for a similar statement about $\mathcal{TLJ}_\delta$ as a rigid $C^*$-tensor category.
\end{example}

\subsection{Modules over \texorpdfstring{$C^*$}{C*}-2-categories}

Modules over $C^*$-2-categories are defined in much the same way as module categories over tensor categories (see e.g.\@ \cite[Ch.~7]{egno15}).
Since we were unable to find the relevant definitions for 2-categories in the literature, we provide them here.
As a consequence of the fact that we work with categories equipped with an involution on the 2-cells, isomorphisms are typically required to be unitary.

\begin{definition}
    Let $B$ be a set.
    A $B$\textit{-graded} $C^*$\textit{-category} is a family of $C^*$-categories $(\mathcal{X}_b)_{b\in B}$.
\end{definition}

\begin{definition}
\label{def:two-cat-module}
    Let $\mathcal{C}$ be a $C^*$-2-category, and put $B=B(\mathcal{C})$.
    A \textit{unitary left} $\mathcal{C}$-\textit{module} consists of a $B$-graded $C^*$-category $\mathcal{X}$ and a collection of bi-unitary\footnote{We say that a bifunctor $F(-,-)$ is bi-unitary if it is bilinear and $F(\phi,\psi)^*=F(\phi^*,\psi^*)$ on morphisms $\phi,\psi$.} bifunctors $-\triangleright -:\mathcal{C}_{ab}\times\mathcal{X}_b\to \mathcal{X}_a$, together with unitary associators and unitors
    \[
        \mu_{\alpha,\beta,x}:\alpha\triangleright (\beta\triangleright x)\to (\alpha\otimes\beta)\triangleright x
        \qquad\text{and}\qquad
        \upsilon_x: \epsilon_c\triangleright x\to x
    \]
    for all $a,b,c\in B$ and $\alpha\in\mathcal{C}_{ab}$, $\beta\in\mathcal{C}_{bc}$ and $x\in\mathcal{X}_c$.
    For the associators, we require that
    \begin{equation}
    \label{eqn:left-module-assoc-diagram}
        \begin{tikzcd}
            & \alpha\triangleright(\beta\triangleright(\gamma\triangleright x))\arrow[ld, "\mu_{\alpha,\beta,\gamma\triangleright x}" description] \arrow[rd, "\I_\alpha\triangleright\mu_{\beta,\gamma,x}" description] &\\
            (\alpha\otimes\beta)\triangleright(\gamma\triangleright x) \arrow[d, "\mu_{x,\alpha,\beta\otimes\gamma}" description] & & \alpha\triangleright ((\beta\otimes\gamma)\triangleright x) \arrow[d, "\mu_{\alpha,\beta\otimes\gamma,x}" description]\\
            ((\alpha\otimes\beta)\otimes\gamma)\triangleright x \arrow[rr, "a_{\alpha,\beta,\gamma}\triangleright \I_x"] & & (\alpha\otimes(\beta\otimes\gamma))\triangleright x
        \end{tikzcd}
    \end{equation}
    commute for all $a,b,c,d\in B$ $\alpha\in\mathcal{C}_{ab}$, $\beta\in\mathcal{C}_{bc}$, $\gamma\in\mathcal{C}_{cd}$ and $x\in\mathcal{X}_d$.
    The unitors should satisfy
    \begin{equation}
    \label{eqn:left-module-unitor-diagram}
        \begin{tikzcd}
            (\alpha\triangleright \epsilon_b)\triangleright x\ar[rd, "\rho_\alpha\triangleright \I_x" description] \ar[rr, "\mu_{\alpha,\epsilon_b,x}"] & & \alpha\triangleright (\epsilon_b\triangleright x) \ar[ld, "\I_\alpha\triangleright \upsilon_x" description]\\
            & \alpha\triangleright x &
        \end{tikzcd}
    \end{equation}
    for all $a,b\in B$, $\alpha\in\mathcal{C}_{ab}$ and $x\in \mathcal{X}_b$.

    Analogously, a \textit{unitary right} $\mathcal{C}$-\textit{module} consists of a $B$-graded $C^*$-category $\mathcal{X}$ and a collection of bi-unitary bifunctors $-\triangleleft -:\mathcal{X}_{a}\times\mathcal{C}_{ab}\to\mathcal{X}_b$ together with unitary associators and unitors
    \[
        \mu_{x,\alpha,\beta}:(x\triangleleft \alpha)\triangleleft\beta\to x\triangleleft(\alpha\otimes\beta)
        \qquad\text{and}\qquad
        \upsilon_x: x\triangleleft \epsilon_a\to x
    \]
    for all $a,b,c\in B$, $\alpha\in\mathcal{C}_{ab}$, $\beta\in\mathcal{C}_{bc}$ and $x\in\mathcal{X}_c$.
    For the associators, we require that
    \begin{equation}
    \label{eqn:right-module-assoc-diagram}
        \begin{tikzcd}
            & ((x\triangleleft\alpha)\triangleleft\beta)\triangleleft\gamma\arrow[ld, "\mu_{x\triangleleft\alpha,\beta,\gamma}" description] \arrow[rd, "\mu_{x,\alpha,\beta}\triangleleft \I_\gamma" description] &\\
            (x\triangleleft \alpha)\triangleleft(\beta\otimes\gamma) \arrow[d, "\mu_{\alpha\otimes\beta,\gamma,x}" description] & & (x\triangleleft(\alpha\otimes\beta))\triangleleft\gamma \arrow[d, "\mu_{x,\alpha\otimes\beta,\gamma}" description]\\
            x\triangleleft (\alpha\otimes (\beta\otimes\gamma)) & & x\triangleleft ((\alpha\otimes \beta)\otimes\gamma) \arrow[ll, "\I_x\triangleleft a_{\alpha,\beta,\gamma}"]
        \end{tikzcd}
    \end{equation}
    commute for all $a,b,c,d\in B$ $\alpha\in\mathcal{C}_{ab}$, $\beta\in\mathcal{C}_{bc}$, $\gamma\in\mathcal{C}_{cd}$ and $x\in\mathcal{X}_a$. 
    The unitors should satisfy
    \begin{equation}
    \label{eqn:right-module-unitor-diagram}
        \begin{tikzcd}
            x\triangleleft(\epsilon_a\triangleleft\alpha)\arrow[rd, "\I_x\triangleleft\lambda_\alpha" description] \arrow[rr, "\mu_{x,\epsilon_a,\alpha,}"] & & (x\triangleleft \epsilon_a)\triangleleft \alpha \arrow[ld, "\upsilon_x\triangleleft \I_\alpha" description]\\
            & x\triangleleft\alpha &
        \end{tikzcd}
    \end{equation}
    for all $a,b\in B$, $\alpha\in\mathcal{C}_{ab}$ and $x\in \mathcal{X}_a$.
\end{definition}
For a $C^*$-2-category $\mathcal{C}$ and $c\in B(\mathcal{C})$, $\mathcal{C}_{-c}=(\mathcal{C}_{ac})_{a\in B(\mathcal{C})}$ (resp.\@ $\mathcal{C}_{c-}$) is a natural unitary left (resp.\@ right) $\mathcal{C}$-module.

\begin{remark}
    There is of course little reason why one could not consider modules coming from $B'$-graded $C^*$-categories together with a function $f:B(\mathcal{C})\to B'$.
    That being said, we state the definitions somewhat less generally to avoid making the notation more complicated than necessary.
    The ``restricted'' version with $B'=B(\mathcal{C})$ is already sufficient for our purposes.
\end{remark}

A $\mathcal{C}$-module also comes with a natural category of $\mathcal{C}$-linear functors.
\begin{definition}
\label{def:c-linear-category}
    Let $\mathcal{C}$ be a $C^*$-2-category, and put $B=B(\mathcal{C})$.
    Given unitary right $\mathcal{C}$-module categories $\mathcal{X}$ and $\mathcal{Y}$, define a $C^*$-category $\Hom_{-\mathcal{C}}(\mathcal{X},\mathcal{Y})$ as follows (compare \cite[\S~7.2]{egno15}):
    \begin{itemize}
        \item the objects are pairs $(F, c)$, where $F_b:\mathcal{X}_b\to \mathcal{X}_b$ is a unitary functor for all $b\in B$, and
        \[
            c_{x,\alpha}: F_a(x)\triangleleft\alpha \to F_b(x\triangleleft\alpha)
        \]
        is a unitary transformation, natural in both $\alpha\in\mathcal{C}_{ab}$ and $x\in\mathcal{X}_a$, that is compatible with the associators of $-\triangleleft -$.
        Concretely, the associator diagram
        \begin{equation}
        \label{eqn:right-commutant-assoc-diagram}
            \begin{tikzcd}
                & (F_a(x)\triangleleft \alpha)\triangleleft\beta\arrow[dl, "\mu_{F_a(x),\alpha,\beta}" description] \arrow[dr, "c_{x,\alpha}\triangleleft\I_\beta" description] & \\
                F_a(x)\triangleleft (\alpha\otimes\beta) \arrow[d, "c_{x,\alpha\otimes\beta}" description] & & F_b(x\triangleleft \alpha)\triangleleft \beta \arrow[d, "c_{x\triangleleft\alpha,\beta}" description] \\
                F_d(x\triangleleft (\alpha\otimes\beta)) & & F_d((x\triangleleft \alpha)\triangleleft\beta) \arrow[ll, "F_d(\mu_{x,\alpha,\beta})"]
            \end{tikzcd}
        \end{equation}
        should commute for all $a,b,d\in B$ and all $\alpha\in\mathcal{C}_{ab}$, $\beta\in\mathcal{C}_{bd}$.
        Additionally, the unitor diagram
        \begin{equation}
        \label{eqn:right-commutant-unitor-diagram}
            \begin{tikzcd}
                F_a(x)\triangleleft\epsilon_a\arrow[rr, "c_{x,\epsilon_a}"]\arrow[dr, "\upsilon_{F_a(x)}" description] & & F_a(x\triangleleft\epsilon_a) \arrow[dl, "F_a(\upsilon_a)" description]\\
                & F_a(x) &
            \end{tikzcd}
        \end{equation}
        should commute for all $a\in B$ and $x\in\mathcal{X}_a$.
        \item a morphism $(F,c)\to (F',c')$ is a natural map
        \[
            \eta_x: F_a(x)\to F_a'(x)
        \]
        for all $a\in B$ and $x\in\mathcal{X}_a$, such that 
        \begin{equation}
        \label{eqn:right-commutant-morphism-diagram}
            \begin{tikzcd}
                F_a(x)\triangleleft\alpha \arrow[d, "\eta_x\triangleleft\I_\alpha"]\arrow[r, "c_{x,\alpha}"] & F_b(x\triangleleft\alpha) \arrow[d, "\eta_{x\triangleleft\alpha}"]\\
                F_a'(x)\triangleleft\alpha \arrow[r, "c'_{x,\alpha}"]& F_b'(x\triangleleft\alpha)
            \end{tikzcd}
        \end{equation}
        commutes for all $a,b\in B$, $x\in\mathcal{X}_a$ and $\alpha\in\mathcal{C}_{ab}$.
    \end{itemize}
    The definition for left $\mathcal{C}$-modules is analogous.
\end{definition}
One can compose $(F,c):\Hom_{-\mathcal{C}}(\mathcal{Y},\mathcal{Z})$ and $(F',c'):\Hom_{-\mathcal{C}}(\mathcal{X},\mathcal{Y})$ into
\[
    (F,c)\tensorhat (F',c')=(FF',c\boxdot c') \in\Hom_{-\mathcal{C}}(\mathcal{X},\mathcal{Z})\,.
\]
Here $c\boxdot c'$ is given by
\begin{equation}
\label{eqn:commutant-morphism-composition}
    (c\boxdot c')_{x,\alpha}:
    F_a(F_a'(x))\triangleleft \alpha\to F_b(F_b'(x\triangleleft\alpha)):
    F_b(c_{x,\alpha}')c_{F_a'(x),\alpha}\,.
\end{equation}
for $\alpha\in\mathcal{C}_{ab}$ and $x\in\mathcal{X}_a$;
This composition is functorial: given morphisms $\eta:(F,c)\to (G,d)$ and $\zeta: (F',c')\to (G',d')$, setting
\[
    (\eta\tensorhat \zeta)_x: F_a(F_a'(x))\to G_a(G_a'(x)): \eta_{G_a'(x)}F_a(\zeta_x)
\]
for $x\in\mathcal{X}_a$ defines a morphism from $(FF',c\boxdot c')$ to $(GG',d\boxdot d')$.
Moreover, $\hat{\otimes}$ is strictly associative, and the identity on $\mathcal{X}$ for $\tensorhat$ is given by the pair $(\catid, \I_{-})$, where $\catid$ is the identity functor.
This allows us to view the category of right $\mathcal{C}$-modules as a (non-small, but strict) dagger 2-category.

\begin{remark}
    One can use this to prove that any $C^*$-2-category $\mathcal{C}$ is unitarily 2-equivalent to a strict one.
    For every $a\in B(\mathcal{C})$, we can view $\mathcal{C}_{a-}=(\mathcal{C}_{ab})_{b\in B}$ as a unitary right $\mathcal{C}$-module.
    The strict dagger 2-category $\mathcal{C}'$ over $B(\mathcal{C})$ given by $\mathcal{C}'_{ab}=\Hom_{-\mathcal{C}}(\mathcal{C}_{b-},\mathcal{C}_{a-})$ is unitarily 2-equivalent to $\mathcal{C}$.
    The proof is entirely analogous to \cite[Theorem~2.8.5]{egno15}.

    In light of this fact, we will often assume rigid $C^*$-2-categories to be strict when convenient.
\end{remark}

\section{Hilbert spaces graded over reduced words}
Let $(\mathcal{C}_i)_{i\in I}$ be a family of rigid $C^*$-2-categories, and suppose that we are given a set $S$ together with injections $f_i:S\to B(\mathcal{C}_i)$.
We denote the classes of 1-cells of $\mathcal{C}_i$ by $\mathcal{C}_{i|ab}$ for $a,b\in B(\mathcal{C}_i)$.
Put $B=\bigsqcup_i B(\mathcal{C}_i)/\sim$, where $\sim$ is the equivalence relation identifying the respective images of the $f_i$.
The map $\phi_i$ sending an element of $B(\mathcal{C}_i)$ to its equivalence class in $B$ is injective.
Extend $\mathcal{C}_i$ to $B$ by taking $\mathcal{C}_{i|aa}=\Hilb_f$ for $a\in B\setminus \phi_i(B(\mathcal{C}_i))$ and $\mathcal{C}_{i|ab}=0$ for $a\neq b$, $a,b\in B$ with at least one not in $\phi_i(B(\mathcal{C}_i))$.
For the purpose of constructing the free product, we may therefore assume that $B=B(\mathcal{C}_i)$ and that both $f_i$ and $\phi_i$ are the identity map for all $i\in I$ without significant loss of generality.

Suppose that we have fixed a set of representatives of irreducibles $\Irr(\mathcal{C}_{i|ab})$ for all $i\in I$, $a,b\in B$.
We denote the tensor unit of $\mathcal{C}_{i|aa}$ by $\epsilon_{i|a}$, and always assume that $\epsilon_{i|a}\in\Irr(\mathcal{C}_{i|aa})$.

A \textit{word} of type $(a,b)$ in $(\mathcal{C}_i)_{i\in I}$ is a sequence $w$ of symbols 
\[
    [\alpha_1]_{i_1}\cdots [\alpha_n]_{i_n}\,,
\]
such that $\alpha_k\in\mathcal{C}_{i_k|a_kb_k}$, $b_k=a_{k+1}$ for all $k$ and $a=a_1$, $b=b_n$.
For $w$ to be \textit{reduced}, we should additionally have that $i_k\neq i_{k+1}$ and $\alpha_k\in\Irr(\mathcal{C}_{i_{k|a_kb_k}})$.
Moreover, we require that $\alpha_k\neq \epsilon_{k|a_k}$ if $a_k=b_k$ in a reduced word.
The empty word is denoted by $\emptyword$.
By convention, we consider $\emptyword$ a reduced word of type $(a,a)$ for any $a\in B$.
For $i\in I$ and $a,b\in B$, let $W_{i|ab}^\ell$ be the set of reduced words of type $(a,b)$ that do not start with a letter in $\mathcal{C}_i$ (this includes the empty word when $a=b$).
Given $\alpha\in\Irr(\mathcal{C}_{i|ab})$ and $w\in W_{i|bc}^\ell$, we then define
\begin{equation}
\label{eqn:left-cons}
    \,[\alpha]_{i}:w = \begin{cases}
        w & a=b, \alpha = \epsilon_{i|a}\,\\
        [\alpha]_i w & \text{otherwise}
    \end{cases}
\end{equation}
For any $i\in I$, one can exhaust all reduced words by expressions of the form $[\alpha]_i:w$ for $\alpha\in\Irr(\mathcal{C}_{i|ab})$ and $w\in W_{i|bc}^\ell$.
The sets $W_{i|ab}^r$ and the notation $w:[\alpha]_i$ are defined analogously.

Let $W_{ab}$ be the set of all reduced words of type $(a,b)$, and denote by $\mathcal{X}_{ab}$ the category of finite-dimensional complex $W_{ab}$-graded Hilbert spaces $\mathcal{H}=\bigoplus_{w\in W_{ab}}\mathcal{H}_w$.
The morphisms in $\mathcal{X}_{ab}$ are linear maps respecting the grading.
That is to say, a morphism $\phi:\mathcal{H}\to\mathcal{K}$ in $\mathcal{X}_{ab}$ can be identified with a collection of linear maps $\phi_w:\mathcal{H}_w\to\mathcal{K}_w$ for $w\in W_{ab}$.
We denote the collection of $C^*$-categories obtained in this way by $\mathcal{X}$.

There is one distinguished object, denoted by $\star$ going forward, that is contained in $\mathcal{X}_{aa}$ for all $a\in B$.
It is defined by
\begin{equation}
\label{eqn:vacuum-object}
    \star_w = \begin{cases}
        \CC & w = \emptyword\,,\\
        0 & \text{otherwise}\,.
    \end{cases}
\end{equation}

\begin{lemma}
\label{thm:factor-actions}
For any $i\in I$ and $a,b,c\in B$, there is a bi-unitary bifunctor $-\triangleright_i -:\mathcal{C}_{i|ab}\times\mathcal{X}_{bc}\to \mathcal{X}_{ac}$ defined on 1-cells by
\[
    (\alpha\triangleright_i \mathcal{H})_{[\pi]_{i}:w}
    =\bigoplus_{\gamma\in\Irr(\mathcal{C}_{i|bd})} (\alpha\gamma,\pi) \otimes \mathcal{H}_{[\gamma]_i:w}
\]
for $d\in B$, $\alpha\in\mathcal{C}_{i|ab}$, $\mathcal{H}\in\mathcal{X}_{bc}$, $w\in W_{i|dc}^\ell$ and $\pi\in\Irr(\mathcal{C}_{i|ad})$.
We denote the embedding of $(\alpha\gamma,\pi) \otimes \mathcal{H}_{[\gamma]_i:w}$ into $(\alpha\triangleright_i \mathcal{H})_{[\pi]_{i}:w}$ by $(\delta_{\triangleright}^i)_{\pi,\gamma,w}^{\alpha,\mathcal{H}}$.

Given morphisms $\phi:\alpha\to\beta$ in $\mathcal{C}_{i|ab}$ and $T:\mathcal{H}\to\mathcal{K}$ in $\mathcal{X}_{bc}$, the associated morphism $\alpha\triangleright_i\mathcal{H}\to \beta\triangleright_i\mathcal{K}$ is given by
\[
    (\phi\triangleright_i T)\left((\delta_{\triangleright}^i)_{\pi,\gamma,w}^{\alpha,\mathcal{H}}(V\otimes\xi)\right)
    = (\delta_{\triangleright}^i)_{\pi,\gamma,w}^{\beta,\mathcal{K}}\left((\phi\otimes\I)V\otimes T\xi\right)
\]
for $d\in B$, $w\in W_{i|dc}^\ell$, $\pi\in\Irr(\mathcal{C}_{i|ad})$, $\gamma\in\Irr(\mathcal{C}_{i|bd})$, $V\in (\alpha\gamma,\pi)$ and $\xi\in\mathcal{H}_{[\gamma]_i:w}$.

These bifunctors turn $\mathcal{X}_{-c}=(\mathcal{X}_{ac})_{a\in B}$ into a unitary left $\mathcal{C}_i$-module for all $c\in B$.
\begin{proof}
    Fix $c\in B$ throughout the proof.
    We only need to specify unitors and associators for $-\triangleright_i-$, all other claims are trivial.

    Given $a,b,d\in B$, $\alpha\in \mathcal{C}_{i|ab}$, $\beta\in\mathcal{C}_{i|bd}$ and $\mathcal{H}\in\mathcal{X}_{dc}$, the associator
    \begin{equation}
    \label{eqn:left-module-associator}
        \mu_{\alpha,\beta,\mathcal{H}}^\ell: \alpha\triangleright_i(\beta\triangleright_i\mathcal{H}) \to \alpha\beta\triangleright_i \mathcal{H}
    \end{equation}
    is given by
    \[
        \mu_{\alpha,\beta,\mathcal{H}}^\ell\left(
            (\delta_{\triangleright}^i)_{\pi,\gamma,w}^{\alpha,\beta\triangleright_i \mathcal{H}}\left(
                V\otimes
                (\delta_{\triangleright}^i)_{\gamma,\gamma',w}^{\beta,\mathcal{H}}(W\otimes\xi)
            \right)
        \right)
        =\qdim(\gamma)^{1/2}(\delta_{\triangleright}^i)_{\pi,\gamma',w}^{\alpha\beta,\mathcal{H}}
        \left((\I\otimes W)V\otimes\xi\right)
    \]
    for all $e\in B$, $\pi\in\Irr(\mathcal{C}_{i|ae})$, $\gamma\in\Irr(\mathcal{C}_{i|be})$, $\gamma'\in\Irr(\mathcal{C}_{i|de})$, $w\in W_{i|ec}^\ell$, $V\in (\alpha\gamma,\pi)$, $W\in (\beta\gamma',\gamma)$ and $\xi\in\mathcal{H}_{[\gamma']_i:w}$.

    In the case where $a=b$, the unitor $\upsilon^\ell_{\mathcal{H}}:\epsilon_{i|a}\triangleright_i \mathcal{H}\to \mathcal{H}$ for $\mathcal{H}\in\mathcal{X}_{ac}$ is the obvious one given by the formula
    \begin{equation}
    \label{eqn:left-module-unitor}
        \upsilon^\ell_{\mathcal{H}}\left((\delta_\triangleright^i)^{\epsilon_{i|a},\mathcal{H}}_{\pi,\pi,w}(\I_\pi\otimes \xi)\right)=\xi
    \end{equation}
    for $e\in B$, $\pi\in \Irr(\mathcal{C}_{i|ae})$, $w\in W_{ec}^\ell$ and $\xi\in\mathcal{H}_{[\pi]_i:w}$.

    One easily checks that the coherence conditions \eqref{eqn:left-module-assoc-diagram} and \eqref{eqn:left-module-unitor-diagram} are satisfied.
\end{proof}
\end{lemma}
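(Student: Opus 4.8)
The plan is to check, in order, that the displayed formulas really define a bi-unitary bifunctor $-\triangleright_i-$, that the associators \eqref{eqn:left-module-associator} and unitors \eqref{eqn:left-module-unitor} are unitary natural transformations, and finally that the two module coherence diagrams \eqref{eqn:left-module-assoc-diagram} and \eqref{eqn:left-module-unitor-diagram} commute. For the bifunctor itself, the first point is that every reduced word of type $(a,c)$ decomposes \emph{uniquely} as $[\pi]_i:w$ with $\pi\in\Irr(\mathcal{C}_{i|ad})$ and $w\in W_{i|dc}^\ell$ — this is the content of the remark preceding the lemma — so the formula for $\alpha\triangleright_i\mathcal{H}$ prescribes an unambiguous $W_{ac}$-graded vector space, finite-dimensional in each degree because $(\alpha\gamma,\pi)=0$ for all but finitely many $\gamma\in\Irr(\mathcal{C}_{i|bd})$ (semisimplicity together with finite-dimensionality of $2$-cell spaces) and $\mathcal{H}$ has only finitely many nonzero homogeneous components. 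Equipping each $(\alpha\gamma,\pi)$ with the inner product of the $C^*$-category $\mathcal{C}_{i|ad}$ turns $\alpha\triangleright_i\mathcal{H}$ into a graded Hilbert space in which the embeddings $(\delta_\triangleright^i)^{\alpha,\mathcal{H}}_{\pi,\gamma,w}$ are isometries with pairwise orthogonal ranges. Functoriality of $\phi\triangleright_i T$ is immediate from the elementary-tensor formula, and bi-unitarity, i.e.\ $(\phi\triangleright_i T)^*=\phi^*\triangleright_i T^*$, follows because post-composition by $\phi\otimes\I$ on $(\alpha\gamma,\pi)$ is the adjoint of post-composition by $\phi^*\otimes\I$, while $T\mapsto T^*$ on $\mathcal{X}_{bc}$.

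For the module structure, the observation to exploit is that, restricted to a fixed degree $[\pi]_i:w$, a fixed $\gamma'$ and a fixed $\xi\in\mathcal{H}_{[\gamma']_i:w}$, the associator $\mu^\ell_{\alpha,\beta,\mathcal{H}}$ is a rescaling of the canonical vector-space isomorphism
\[
  \bigoplus_{\gamma\in\Irr(\mathcal{C}_{i|be})}(\alpha\gamma,\pi)\otimes(\beta\gamma',\gamma)\;\xrightarrow{\ \sim\ }\;(\alpha\beta\gamma',\pi),
  \qquad V\otimes W\longmapsto (\I_\alpha\otimes W)V,
\]
which encodes that $\Hom(\pi,-)$ and $\alpha\otimes-$ commute with the semisimple decomposition of $\beta\gamma'$. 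A partial-trace (Frobenius reciprocity) computation gives $\langle(\I_\alpha\otimes W)V,(\I_\alpha\otimes W')V'\rangle=\qdim(\gamma)^{-1}\langle W,W'\rangle\langle V,V'\rangle$ whenever $W,W'\in(\beta\gamma',\gamma)$ with $\gamma$ irreducible, so the weight $\qdim(\gamma)^{1/2}$ in \eqref{eqn:left-module-associator} is exactly what makes $\mu^\ell$ an isometry; it is then unitary because the displayed map is bijective. The unitor \eqref{eqn:left-module-unitor} is the evident (rescaled) isomorphism coming from $\epsilon_{i|a}\gamma\cong\gamma$ and $\End(\pi)\cong\CC$. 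Both $\mu^\ell$ and $\upsilon^\ell$ extend by multilinearity to well-defined maps, and their naturality in $\alpha$, $\beta$ and $\mathcal{H}$ is read straight off the functoriality formula for $\triangleright_i$.

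Finally, for the coherence I would evaluate both composites of \eqref{eqn:left-module-assoc-diagram} on a triply nested elementary tensor $(\delta_\triangleright^i)(V\otimes(\delta_\triangleright^i)(W\otimes(\delta_\triangleright^i)(W'\otimes\xi)))$: each side produces the same product of two $\qdim^{1/2}$-factors — indexed by the same pair of intermediate irreducibles — multiplied by a rebracketing of the morphism composite built out of $V$, $W$, $W'$, and the two rebracketings coincide by the pentagon identity for the associator of $\mathcal{C}_i$ (which is precisely where $a_{\alpha,\beta,\gamma}$ enters the diagram) together with bifunctoriality of $\otimes$. The triangle \eqref{eqn:left-module-unitor-diagram} reduces to the triangle identity in $\mathcal{C}_i$ in the same fashion, and since each $\mathcal{C}_i$ may be replaced by a strict one without loss of generality, the $\mathcal{C}_i$-associators drop out entirely. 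The hard part is exactly this last bookkeeping: conceptually nothing is at stake once the scaling identity above is in hand, but one must be scrupulous about summing over the correct intermediate irreducibles at each stage and about matching the $\qdim$ normalizations on the two sides of each coherence square so that they cancel — the danger is clerical, not mathematical.
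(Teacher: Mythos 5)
Your overall strategy is the same as the paper's (the paper merely exhibits the associator and unitor and asserts that the checks are routine), and the one piece of real content you supply is correct: with the categorical-trace inner product on the multiplicity spaces, one has $\langle(\I_\alpha\otimes W)V,(\I_\alpha\otimes W')V'\rangle=\qdim(\gamma)^{-1}\langle W,W'\rangle\langle V,V'\rangle$ for $\gamma$ irreducible, so the weight $\qdim(\gamma)^{1/2}$ makes \eqref{eqn:left-module-associator} an isometry, bijectivity follows from semisimplicity, and your elementary-tensor evaluation of \eqref{eqn:left-module-assoc-diagram} does close up, since both composites carry the same two $\qdim^{1/2}$ factors and the rebracketed composites of $V,W,W'$ agree by the interchange law (pentagon of $\mathcal{C}_i$ in the non-strict case).

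There is, however, one concrete step that does not survive your own conventions, and it is precisely the step you assert rather than check. With the trace inner product that you (correctly) use to make \eqref{eqn:left-module-associator} unitary, the unitor as printed in \eqref{eqn:left-module-unitor} is \emph{not} an isometry, because $\|\I_\pi\otimes\xi\|=\qdim(\pi)^{1/2}\|\xi\|$; and, independently of any inner product, the triangle \eqref{eqn:left-module-unitor-diagram} then fails by exactly a factor $\qdim(\gamma)^{1/2}$: applying $\mu^\ell_{\alpha,\epsilon_{i|b},\mathcal{H}}$ to $(\delta_{\triangleright}^i)^{\alpha,\epsilon_{i|b}\triangleright_i\mathcal{H}}_{\pi,\gamma,w}\bigl(V\otimes(\delta_{\triangleright}^i)^{\epsilon_{i|b},\mathcal{H}}_{\gamma,\gamma,w}(\I_\gamma\otimes\xi)\bigr)$ yields $\qdim(\gamma)^{1/2}(\delta_{\triangleright}^i)^{\alpha,\mathcal{H}}_{\pi,\gamma,w}(V\otimes\xi)$, whereas $\I_\alpha\triangleright_i\upsilon^\ell_{\mathcal{H}}$ yields the same vector without the factor. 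So your claim that the triangle ``reduces to the triangle identity in $\mathcal{C}_i$ in the same fashion'' is not literally true for the printed unitor; the fix, which your parenthetical ``(rescaled)'' gestures at but never carries out, is to set $\upsilon^\ell_{\mathcal{H}}\bigl((\delta_\triangleright^i)^{\epsilon_{i|a},\mathcal{H}}_{\pi,\pi,w}(\I_\pi\otimes \xi)\bigr)=\qdim(\pi)^{1/2}\,\xi$, which simultaneously restores isometry of the unitor and commutativity of \eqref{eqn:left-module-unitor-diagram}. This normalization slip originates in the paper's formula rather than in your method, but since the entire content of the lemma is this bookkeeping, it is exactly the point that needed to be computed rather than asserted.
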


\begin{lemma}
\label{thm:factor-actions-right}
For any $i\in I$ and $a,b,c\in B$, there is a bi-unitary bifunctor $-\triangleleft_i -:\mathcal{X}_{ca}\times\mathcal{C}_{i|ab}\to \mathcal{X}_{cb}$ defined on 1-cells by
\[
    (\mathcal{H}\triangleleft_i \alpha)_{w:[\pi]_i}
    =\bigoplus_{\gamma\in\Irr(\mathcal{C}_{i|da})} \mathcal{H}_{w:[\gamma]_i}\otimes (\gamma\alpha,\pi)
\]
for $d\in B$, $\alpha\in\mathcal{C}_{i|ab}$, $\mathcal{H}\in\mathcal{X}_{ca}$, $w\in W_{i|cd}^r$ and $\pi\in\Irr(\mathcal{C}_{i|db})$.
We denote the embedding of $\mathcal{H}_{w:[\gamma]_i}\otimes (\gamma\alpha,\pi)$ into $(\mathcal{H}\triangleleft_i \alpha)_{w:[\pi]_i}$ by $(\delta_{\triangleleft}^i)_{w,\pi,\gamma}^{\alpha,\mathcal{H}}$.

Given morphisms $\phi:\alpha\to\beta$ in $\mathcal{C}_{i|ab}$ and $T:\mathcal{H}\to\mathcal{K}$ in $\mathcal{X}_{ca}$, the associated morphism $\mathcal{H}\triangleleft_i \alpha\to \mathcal{K}\triangleleft_i\beta$ is given by
\[
    (T\triangleleft_i \phi)\left((\delta_{\triangleleft}^i)_{w,\pi,\gamma}^{\mathcal{H},\alpha}(\xi\otimes V)\right)
    = (\delta_{\triangleleft}^i)_{w,\pi,\gamma}^{\mathcal{K},\beta}\left(T\xi\otimes (\I\otimes\phi)V\right)
\]
for $d\in B$, $w\in W_{i|cd}^r$, $\pi\in\Irr(\mathcal{C}_{i|db})$ and $\gamma\in\Irr(\mathcal{C}_{i|da})$, $V\in (\gamma\alpha,\pi)$ and $\xi\in\mathcal{H}_{w:[\gamma]_i}$

The bifunctors $-\triangleright_i-$ turn $\mathcal{X}_{c-}$ into a unitary right $\mathcal{C}_i$-module for all $c\in B$.
\begin{proof}
    Fix $c\in B$ throughout the proof.
    As in Lemma~\ref{thm:factor-actions}, we only need to specify unitors and associators for $-\triangleleft_i-$.

    Given $a,b,d\in B$, $\alpha\in \mathcal{C}_{i|ab}$, $\beta\in\mathcal{C}_{i|bd}$ and $\mathcal{H}\in\mathcal{X}_{ca}$, the associator
    \begin{equation}
    \label{eqn:right-module-associator}
        \mu_{\mathcal{H},\alpha,\beta}^r: (\mathcal{H}\triangleleft_i\alpha)\triangleleft_i \beta\to \mathcal{H}\triangleleft_i \alpha\beta
    \end{equation}
    is given by
    \[
        \mu_{\mathcal{H},\alpha,\beta}^r\left(
                (\delta_{\triangleleft}^i)_{w,\pi,\gamma}^{\mathcal{H}\triangleleft_i\alpha,\beta}\left(
                    (\delta_{\triangleleft}^i)_{w,\gamma,\gamma'}^{\mathcal{H},\alpha}(\xi\otimes W)
                    \otimes V
                \right)
        \right)
        =\qdim(\gamma)^{1/2}(\delta^i_\triangleleft)^{\mathcal{H},\alpha\beta}_{w,\pi,\gamma'}(\xi\otimes (W\otimes\I)V)
    \]
    for all $e\in B$, $\pi\in\Irr(\mathcal{C}_{i|ed})$, $\gamma\in\Irr(\mathcal{C}_{i|eb})$, $\gamma'\in\Irr(\mathcal{C}_{i|ea})$, $w\in W_{i|ce}^r$, $V\in (\gamma\beta,\pi)$, $W\in (\gamma'\alpha,\gamma)$ and $\xi\in\mathcal{H}_{w:[\gamma']_i}$.

    When $a=b$, the unitor $\upsilon^r_{\mathcal{H}}:\mathcal{H}\triangleleft_i \epsilon_{i|a}\to \mathcal{H}$ for $\mathcal{H}\in\mathcal{X}_{ca}$ is given by
    \begin{equation}
    \label{eqn:right-module-unitor}
        \upsilon^r_{\mathcal{H}}\left((\delta_\triangleleft^i)^{\mathcal{H},\epsilon_{i|a}}_{w,\pi,\pi}(\xi\otimes \I_\pi)\right)=\xi
    \end{equation}
    for $e\in B$, $\pi\in \Irr(\mathcal{C}_{i|ea})$, $w\in W_{ce}^r$ and $\xi\in\mathcal{H}_{w:[\pi]_i}$.
    One again easily verifies that these definitions satisfy the coherence conditions \eqref{eqn:right-module-assoc-diagram} and \eqref{eqn:right-module-unitor-diagram}.
\end{proof}
\end{lemma}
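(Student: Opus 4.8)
The plan is to carry out the exact left--right mirror of the proof of Lemma~\ref{thm:factor-actions}: throughout one replaces left consing $[\alpha]_i:w$ by right consing $w:[\alpha]_i$, the sets $W^\ell_{i|dc}$ by $W^r_{i|cd}$, the $2$-cell spaces $(\alpha\gamma,\pi)$ by $(\gamma\alpha,\pi)$, and reverses the order of the tensor factors. Since that proof is only sketched, I indicate the three points at which a verification is needed; each is the mirror image of the corresponding point for $-\triangleright_i-$.

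First, $-\triangleleft_i-$ is a well-defined bi-unitary bifunctor. On $1$-cells this is immediate, the right-hand side of the defining formula being a finite orthogonal direct sum of Hilbert spaces. On $2$-cells, $T\triangleleft_i\phi$ is prescribed on each orthogonal direct summand $\mathcal{H}_{w:[\gamma]_i}\otimes(\gamma\alpha,\pi)$ by $\xi\otimes V\mapsto T\xi\otimes(\I\otimes\phi)V$, which is manifestly functorial, and bi-unitarity follows because the embeddings are isometries with mutually orthogonal ranges. I would not spell this out.

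Second, I would define the associator $\mu^r_{\mathcal{H},\alpha,\beta}$ and unitor $\upsilon^r_{\mathcal{H}}$ by the formulas \eqref{eqn:right-module-associator} and \eqref{eqn:right-module-unitor}; being prescribed summand by summand, they are well defined, natural in all arguments, and clearly bijective. The one substantive point is that $\mu^r$ is \emph{unitary}, and this is where the factor $\qdim(\gamma)^{1/2}$ earns its keep. Fixing a graded component $w:[\pi]_i$ and the innermost label $\gamma'$, unitarity of $\mu^r$ amounts to the assertion that, for each $e\in B$, the map
\[
    \bigoplus_{\gamma\in\Irr(\mathcal{C}_{i|eb})}(\gamma'\alpha,\gamma)\otimes(\gamma\beta,\pi)\longrightarrow(\gamma'\alpha\beta,\pi),\qquad W\otimes V\longmapsto \qdim(\gamma)^{1/2}\,(W\otimes\I_\beta)V,
\]
is unitary for the categorical inner products on the $2$-cell spaces. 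This is the standard normalisation identity for intertwiner spaces between a tensor product of objects one of which is irreducible; it follows from a short computation with a standard solution to the conjugate equations (cf.\ \cite{neshveyev-tuset}). Unitarity of $\upsilon^r$ is immediate, using $\qdim(\epsilon_{i|a})=1$.

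Finally, for the coherence conditions \eqref{eqn:right-module-assoc-diagram} and \eqref{eqn:right-module-unitor-diagram} I would first invoke the remark that $\mathcal{C}_i$ may be taken strict, so that $a_{\alpha,\beta,\gamma}$ and $\lambda_\alpha$ are identities. Evaluated on a graded component and a choice of the three intermediate irreducibles --- at $\mathcal{H}$, at $\mathcal{H}\triangleleft_i\alpha$, and at $(\mathcal{H}\triangleleft_i\alpha)\triangleleft_i\beta$ --- the hexagon then reduces to associativity of composition of $2$-cells in $\mathcal{C}_i$ together with the trivial observation that both ways round absorb the quantum dimensions of the two middle irreducibles exactly once each; the unitor triangle reduces to the triviality of fusing with $\epsilon_{i|a}$. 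The only step requiring genuine work is the unitarity of $\mu^r$, i.e.\ the normalisation identity above; everything else is bookkeeping, identical in form to Lemma~\ref{thm:factor-actions}.
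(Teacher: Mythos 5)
Your proposal is correct and follows essentially the same route as the paper: the paper's proof likewise just writes down the associator \eqref{eqn:right-module-associator} and unitor \eqref{eqn:right-module-unitor} as the mirror images of those in Lemma~\ref{thm:factor-actions} and declares the coherence checks routine. Your added observation that unitarity of $\mu^r$ reduces to the standard normalisation identity justifying the factor $\qdim(\gamma)^{1/2}$ is exactly the point the paper leaves implicit, and it is correct.
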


The left and right actions of $\mathcal{C}_i$ on $\mathcal{X}$ commute up to a canonical isomorphism.
In fact, the left $\mathcal{C}_i$-module structure commutes with \textit{any} of the right $\mathcal{C}_j$-actions in this way.
This is the content of the next lemma.
\begin{lemma}
\label{thm:left-right-bimodule}
    Fix $i,j\in I$, $a,b\in B$ and $\alpha\in\mathcal{C}_{i|ab}$.
    For all $c,d\in B$, $\alpha\in\mathcal{C}_{i|ab}$, $\beta\in\mathcal{C}_{j|cd}$ and $\mathcal{H}\in\mathcal{X}_{bc}$, there exists a natural unitary
    \[
        \Sigma^{ij}_{\alpha,\mathcal{H},\beta}:
        (\alpha\triangleright_i\mathcal{H})\triangleleft_j\beta
        \to \alpha\triangleright_i(\mathcal{H}\triangleleft_j\beta)
    \]
    such that $(\alpha\triangleright_i -, \Sigma^{ij}_{\alpha,-,-})\in\Hom_{-\mathcal{C}_j}(\mathcal{X}_{b-},\mathcal{X}_{a-})$.

    Additionally, for all $\alpha\in\mathcal{C}_{i|ab}$ and $\alpha'\in\mathcal{C}_{i|bc}$, the module associator maps defined in \eqref{eqn:left-module-associator} produce unitary morphisms
    \[
        \mu^\ell_{\alpha,\alpha',-}:
        (\alpha\triangleright_i -, \Sigma^{ij}_{\alpha,-,-})
        \tensorhat (\alpha'\triangleright_i -, \Sigma^{ij}_{\alpha',-,-})
        \to (\alpha\alpha'\triangleright_i -, \Sigma^{ij}_{\alpha\alpha',-,-})
    \]
    in $\Hom_{-\mathcal{C}_j}(\mathcal{X}_{c-},\mathcal{X}_{a-})$.
    For all $a\in B$, the unitors \eqref{eqn:left-module-unitor} also yield unitaries
    \[
        \upsilon^\ell_{-}: (\epsilon_{i|a}\triangleright_i-, \Sigma^{ij}_{\epsilon_{i|a},-,-})
        \to (\catid, \I_{-})
    \]
    in $\Hom_{-\mathcal{C}_j}(\mathcal{X}_{a-},\mathcal{X}_{a-})$.
\begin{proof}
    First, assume that $i\neq j$.
    Additionally choose $e,f\in B$, $\pi\in\Irr(\mathcal{C}_{i|ae})$, $\pi'\in\Irr(\mathcal{C}_{j|fd})$, $\gamma\in\Irr(\mathcal{C}_{i|be})$ and $\gamma'\in\Irr(\mathcal{C}_{j|fc})$, and $w\in W_{i|ef}^\ell \cap W_{j|ef}^r$.
    Then set
    \begin{equation}
    \label{eqn:left-right-bimodule-generic}
        \Sigma^{ij}_{\alpha,\mathcal{H},\beta}\left[
            (\delta_{\triangleleft}^j)^{\alpha\triangleright_i\mathcal{H},\beta}_{[\pi]_i:w,\pi',\gamma'}\left(
                (\delta_{\triangleright})^{\alpha,\mathcal{H}}_{\pi,\gamma,w:[\gamma']_j}(V\otimes \xi)\otimes V'
            \right)
        \right]
        =(\delta_{\triangleright}^i)^{\alpha,\mathcal{H}\triangleleft_j\beta}_{\pi,\gamma,w:[\pi']_j}\left(
            V\otimes (\delta^j_{\triangleleft})^{\mathcal{H},\beta}_{[\gamma]_i:w,\pi',\gamma'}(\xi\otimes V')\right)
    \end{equation}
    for $V\in(\alpha\gamma,\pi)$, $V'\in(\gamma'\beta,\pi')$ and $\xi\in\mathcal{H}_{[\gamma]_i:w:[\gamma']_j}$.

    In case $i=j$, the formula \eqref{eqn:left-right-bimodule-generic} is applicable as long as $w$ is not the empty word.
    To define $\Sigma^{ii}_{\alpha,\mathcal{H},\beta}$ on $((\alpha\triangleright_i\mathcal{H})\triangleleft_i\beta)_{\emptyword:[\pi]_i}$ for $\pi\in\Irr(\mathcal{C}_{i|ad})$, we need a different approach, since $\emptyword:[\pi]_i$ does not lie in $W_{i|ad}^\ell$ in this case.

    For $\pi\in\Irr(\mathcal{C}_{i|ad})$, $\gamma\in\Irr(\mathcal{C}_{i|bc})$, $\gamma'\in\Irr(\mathcal{C}_{i|ac})$, $V\in (\alpha\gamma,\gamma')$, $V'\in(\gamma'\beta,\pi)$ and $\xi\in\mathcal{H}_{[\gamma]_i}$ we instead put
    \begin{align*}
        &\Sigma^{ii}_{\alpha,\mathcal{H},\beta}\left[(\delta^i_{\triangleleft})^{\alpha\triangleright_i\mathcal{H},\beta}_{\emptyword,\pi,\gamma'}\numberthis\label{eqn:left-right-bimodule-edge}
            \left(
                (\delta^i_{\triangleright})^{\alpha,\mathcal{H}}_{\gamma',\gamma,\emptyword}(V\otimes\xi)\otimes V'
            \right)
        \right]\\
        &\quad =\qdim(\gamma')^{1/2}\sum_{\substack{\sigma\in\Irr(\mathcal{C}_{i|bd}) \\ W\in\onb(\gamma\beta,\sigma)}} \qdim(\sigma)^{1/2} (\delta^i_{\triangleright})^{\alpha,\mathcal{H}\triangleleft_i\beta}_{\pi,\sigma,\emptyword}\left(
            (\I\otimes W^*)(V\otimes\I)V'\otimes (\delta^i_{\triangleleft})^{\mathcal{H},\beta}_{\emptyword,\sigma,\gamma}(\xi\otimes W)
        \right)\,.
    \end{align*}
    An easy diagram chase shows that $\Sigma^{ij}_{\alpha,-,-}$ makes the diagrams \eqref{eqn:right-commutant-assoc-diagram} and \eqref{eqn:right-commutant-unitor-diagram} commute in either case.
    The commutativity of the diagram \eqref{eqn:right-commutant-morphism-diagram} for $\mu^\ell_{\alpha,-,-}$ follows from an analogous computation.
\end{proof}
\end{lemma}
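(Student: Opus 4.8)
The plan is to compute the two $\mathcal{C}_j$-module endofunctors $(\alpha\triangleright_i-)\triangleleft_j\beta$ and $\alpha\triangleright_i(-\triangleleft_j\beta)$ completely explicitly, one graded component at a time, and to read off $\Sigma^{ij}$ from the comparison. The combinatorial input is that every reduced word occurring here has a normal form adapted to its leftmost $\mathcal{C}_i$-letter and its rightmost $\mathcal{C}_j$-letter: when $i\neq j$, a reduced word of type $(a,d)$ is uniquely of the form $[\pi]_i:w:[\pi']_j$ with $\pi\in\Irr(\mathcal{C}_{i|ae})$, $\pi'\in\Irr(\mathcal{C}_{j|fd})$ and $w\in W_{i|ef}^\ell\cap W_{j|ef}^r$, and the same normal form is available when $i=j$ unless the interior word $w$ is empty, i.e.\ for the words $[\pi]_i$ with $\pi\in\Irr(\mathcal{C}_{i|ad})$ (and $\emptyword$ when $a=d$). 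I would therefore split the argument into the \emph{generic} regime (normal form with arbitrary interior word when $i\neq j$, with nonempty interior word when $i=j$), the \emph{edge} case $i=j$ with empty interior word, and the verification of the coherence axioms.

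In the generic regime one just feeds the normal form into the definitions of Lemmas~\ref{thm:factor-actions} and~\ref{thm:factor-actions-right}. Both $((\alpha\triangleright_i\mathcal{H})\triangleleft_j\beta)_{[\pi]_i:w:[\pi']_j}$ and $(\alpha\triangleright_i(\mathcal{H}\triangleleft_j\beta))_{[\pi]_i:w:[\pi']_j}$ come out as the \emph{same} direct sum
\[
    \bigoplus_{\substack{\gamma\in\Irr(\mathcal{C}_{i|be})\\\gamma'\in\Irr(\mathcal{C}_{j|fc})}}(\alpha\gamma,\pi)\otimes\mathcal{H}_{[\gamma]_i:w:[\gamma']_j}\otimes(\gamma'\beta,\pi')\,,
\]
with the tensor factors in the same order, so $\Sigma^{ij}$ is forced to be the identity under this identification; this is formula~\eqref{eqn:left-right-bimodule-generic}, and its unitarity and naturality in $\mathcal{H}$ and $\beta$ (as well as in $\alpha$) are then immediate.

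The only genuine computation is the edge case. Unravelling the definitions at a word $[\pi]_i$ one finds
\[
    ((\alpha\triangleright_i\mathcal{H})\triangleleft_i\beta)_{[\pi]_i}\cong\bigoplus_{\gamma,\gamma'}(\alpha\gamma,\gamma')\otimes\mathcal{H}_{[\gamma]_i}\otimes(\gamma'\beta,\pi)\,,\qquad(\alpha\triangleright_i(\mathcal{H}\triangleleft_i\beta))_{[\pi]_i}\cong\bigoplus_{\sigma,\gamma}(\alpha\sigma,\pi)\otimes\mathcal{H}_{[\gamma]_i}\otimes(\gamma\beta,\sigma)\,,
\]
and I would identify each of these \emph{unitarily} with $\bigoplus_\gamma(\alpha\gamma\beta,\pi)\otimes\mathcal{H}_{[\gamma]_i}$ via the rescaled embeddings $V\otimes V'\mapsto\qdim(\gamma')^{1/2}(V\otimes\I_\beta)V'$ and $U\otimes W\mapsto\qdim(\sigma)^{1/2}(\I_\alpha\otimes W)U$. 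That these are unitary rests only on the standard facts recalled earlier: the completeness relations $\sum_{W\in\onb(\gamma\beta,\sigma)}\qdim(\sigma)\,WW^*=P^{\gamma\beta}_\sigma$ and $\sum_\sigma P^{\gamma\beta}_\sigma=\I_{\gamma\beta}$, and the norm identity $\|(V\otimes\I_\beta)V'\|^2=\qdim(\gamma')^{-1}\|V\|^2\|V'\|^2$ when $\gamma'$ is simple, all of which follow from rigidity and the defining property of standard solutions of the conjugate equations. Composing the two identifications yields exactly the recoupling formula~\eqref{eqn:left-right-bimodule-edge} for $\Sigma^{ii}_{\alpha,\mathcal{H},\beta}$ on the empty-word pieces; it is unitary as a composite of unitaries, naturality in $\mathcal{H}$ is trivial, and naturality in $\beta$ follows again from $\sum_W WW^*\propto P_\sigma$ and $\sum_\sigma P_\sigma=\I$.

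It then remains to verify that $\Sigma^{ij}_{\alpha,-,-}$ makes the associator and unitor diagrams~\eqref{eqn:right-commutant-assoc-diagram} and~\eqref{eqn:right-commutant-unitor-diagram} commute, so that $(\alpha\triangleright_i-,\Sigma^{ij}_{\alpha,-,-})\in\Hom_{-\mathcal{C}_j}(\mathcal{X}_{b-},\mathcal{X}_{a-})$, and that $\mu^\ell_{\alpha,\alpha',-}$ and $\upsilon^\ell_{-}$ make the morphism diagram~\eqref{eqn:right-commutant-morphism-diagram} commute. Away from the empty-interior-word locus every map in sight is an identity in normal-form coordinates, so those diagrams commute for trivial reasons; one therefore only chases them on the pieces graded by words $[\pi]_i$, where, through the identifications above, each diagram becomes a weighted associativity identity for intertwiners in $\mathcal{C}_i$ — a pentagon-type relation for the associators combined with the $\qdim^{1/2}$-weighted definitions~\eqref{eqn:left-module-associator} and~\eqref{eqn:right-module-associator} of $\mu^\ell$ and $\mu^r$ — which again follows from associativity of composition of $1$-cells together with the same completeness relations. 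The hard part will be this last step: not any single identity, but the bookkeeping required to keep track of which of~\eqref{eqn:left-right-bimodule-generic} and~\eqref{eqn:left-right-bimodule-edge} applies along each edge of each diagram (since $\mu^\ell$, $\mu^r$ and the module unitors can move a word into or out of the empty-interior regime) and to match up the various quantum-dimension weights. No ideas beyond semisimplicity, the conjugate equations, and functoriality of $\triangleright_i$ and $\triangleleft_j$ are needed.
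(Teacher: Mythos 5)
Your proposal is correct and takes essentially the same route as the paper: the same case split into the generic regime (where, in the normal-form coordinates $[\pi]_i:w:[\pi']_j$, the map $\Sigma^{ij}$ is the identity, i.e.\ formula \eqref{eqn:left-right-bimodule-generic}) and the $i=j$, empty-interior-word edge case, where your recoupling through the intermediate identification with $\bigoplus_\gamma(\alpha\gamma\beta,\pi)\otimes\mathcal{H}_{[\gamma]_i}$ reproduces exactly the paper's formula \eqref{eqn:left-right-bimodule-edge}, with unitarity justified by the same standard-solution/completeness facts. The paper likewise relegates the coherence diagrams \eqref{eqn:right-commutant-assoc-diagram}, \eqref{eqn:right-commutant-unitor-diagram} and \eqref{eqn:right-commutant-morphism-diagram} to an ``easy diagram chase'', so your outline (including the honest bookkeeping caveat about words moving into and out of the edge regime) matches its level of detail.
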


The takeaway from Lemma~\ref{thm:left-right-bimodule} is that for any fixed $i\in I$, $-\triangleright_i -$ commutes with $-\triangleleft_j-$ for all $j\in I$.
Operations with this property can also be neatly formalised as a $C^*$-2-category $\mathcal{C}^\wr$.
For each $a,b\in B$, the objects in $\mathcal{C}^\wr_{ab}$ are pairs $(F, (c^j)_{j\in J})$ such that $(F,c_j)\in\Hom_{-\mathcal{C}_j}(\mathcal{X}_{b-},\mathcal{X}_{a-})$ for all $j\in I$ (see Definition~\ref{def:c-linear-category}).
The 2-cells in $\mathcal{C}^\wr$ are natural maps that satisfy the compatibility property for morphisms in $\Hom_{-\mathcal{C}_j}(\mathcal{X}_{b-},\mathcal{X}_{a-})$ for all $j\in I$, and the composition of 1-cells is defined exactly as in Definition~\ref{def:c-linear-category}.

The existence of a norm on $\mathcal{C}^\wr$ is guaranteed by the following lemma, which describes the 2-cells in $\mathcal{C}^\wr$ in terms of the distinguished object $\star$ defined in \eqref{eqn:vacuum-object}.
\begin{lemma}
\label{thm:c-wr-morphisms}
    Fix $a,b\in B$.
    For all 1-cells $(F,(c^j)_{j\in I})$, $(G,(d^j)_{j\in I})$ in $\mathcal{C}^\wr_{ab}$, the map that sends a 2-cell $\eta: (F,(c^j)_{j\in I})\to (G,(d^j)_{j\in I})$ to $\eta_\star: F(\star)\to G(\star)$ is $*$-preserving and injective.
    In particular, setting $\|\eta\|=\|\eta_\star\|$ provides a $C^*$-norm on $\mathcal{C}^\wr$.
\begin{proof}
    Fix a 2-cell $\eta: (F,(c^j)_{j\in I})\to (G,(d^j)_{j\in I})$.
    Choose $c\in B$ and a reduced word $w$ of type $(b,c)$.
    Write $w=w' [\pi]_i$.
    Define $\star\triangleleft\emptyword=\star$, and inductively set
    \[
        \star\triangleleft w = (\star\triangleleft w')\triangleleft_i \pi\,.
    \]
    The object $\star\triangleleft w$ is irreducible in $\mathcal{X}_{bc}$, and letting $w$ run over all reduced words of type $(b,c)$, we exhaust all irreducibles in $\mathcal{X}_{bc}$ up to isomorphism.
    Moreover, $\eta_{\star\triangleleft w}$ satisfies the relation
    \begin{equation}
    \label{eqn:c-wr-morphisms-unroll}
        \eta_{\star\triangleleft w}=d^i_{\star\triangleleft w',\pi}(\eta_{w'}\triangleleft_i \I_\pi) (c^i_{\star\triangleleft w',\pi})^*\,.
    \end{equation}
    In other words, $\eta_\star$ fully determines $\eta_{\star\triangleleft w}$ for all reduced words $w$.
    For general objects $\mathcal{H}\in\mathcal{X}_{bc}$, we then have that
    \begin{equation}
    \label{eqn:c-wr-morphisms-decompose}
        \eta_{\mathcal{H}}: F(\mathcal{H})\to G(\mathcal{H}):
        \eta_{\mathcal{H}}=\sum_{\substack{w\in W_{bc}\\ U\in\onb(\mathcal{H},\star\triangleleft w)}}
        G(U)\eta_{\star\triangleleft w}F(U)^*\,.
    \end{equation}
    The relations \eqref{eqn:c-wr-morphisms-unroll} and \eqref{eqn:c-wr-morphisms-decompose} imply that $\eta=0$ if and only if $\eta_\star=0$.
    This proves the lemma.
\end{proof}
\end{lemma}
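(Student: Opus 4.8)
The plan is to show that the assignment $\eta\mapsto\eta_\star$ defines an injective, $*$-preserving, multiplicative functor from $\mathcal{C}^\wr_{ab}$ to the $C^*$-category $\mathcal{X}_{ab}$, from which the final claim follows formally. That it is $*$-preserving and multiplicative is immediate, since both the involution and the composition of $2$-cells in $\mathcal{C}^\wr$ are computed componentwise in $\mathcal{X}$, so $(\eta^*)_\star=(\eta_\star)^*$ and $(\zeta\eta)_\star=\zeta_\star\eta_\star$. As every morphism space in $\mathcal{X}_{ab}$ is a finite-dimensional Hilbert space carrying the operator norm, once injectivity is established the formula $\|\eta\|=\|\eta_\star\|$ automatically satisfies the axioms of a $C^*$-norm and turns each $\mathcal{C}^\wr_{ab}$ into a $C^*$-category.

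The heart of the matter is injectivity, i.e.\ that $\eta_\star=0$ forces $\eta=0$. The key structural fact is that $\star$ generates $\mathcal{X}_{b-}$ as a right $\mathcal{C}$-module, in the precise sense that every irreducible object of $\mathcal{X}_{bc}$ is isomorphic to one of the form $\star\triangleleft w$, where $w$ runs over reduced words of type $(b,c)$ and $\star\triangleleft w$ is built by iterating the right actions $\triangleleft_i$ one letter at a time. I would verify this by induction on the length of $w$, using the explicit formula for $-\triangleleft_i-$ from Lemma~\ref{thm:factor-actions-right}: if $\star\triangleleft w'$ is the simple object supported at the single grade $w'$ and $w=w'[\pi]_i$ is reduced (so $w'$ does not end in an $\mathcal{C}_i$-letter), then in $(\star\triangleleft w')\triangleleft_i\pi$ the only surviving summand is the one with $\gamma=\epsilon_{i|e}$, and $(\epsilon_{i|e}\pi,\rho)=(\pi,\rho)$ is one-dimensional exactly when $\rho=\pi$; hence $\star\triangleleft w$ is again the simple object supported at grade $w$. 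Since the simple objects of $\mathcal{X}_{bc}$ are precisely the one-dimensional Hilbert spaces supported at a single reduced word, this exhausts the irreducibles and they are pairwise non-isomorphic.

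Next I would \emph{unroll} $\eta$ along these generators. Writing a reduced word as $w=w'[\pi]_i$, the compatibility of $\eta$ with the right $\mathcal{C}_i$-module structure --- diagram~\eqref{eqn:right-commutant-morphism-diagram} applied to $\I_\pi$, together with the naturality of $c^i,d^i$ --- forces $\eta_{\star\triangleleft w}=d^i_{\star\triangleleft w',\pi}(\eta_{\star\triangleleft w'}\triangleleft_i\I_\pi)(c^i_{\star\triangleleft w',\pi})^*$. An induction on word length then shows $\eta_\star=0\Rightarrow\eta_{\star\triangleleft w}=0$ for every reduced word $w$. Finally, for an arbitrary $\mathcal{H}\in\mathcal{X}_{bc}$ I would pick, for each reduced word $w$, an orthonormal basis of isometries $U:\star\triangleleft w\to\mathcal{H}$ realising the isotypic decomposition of $\mathcal{H}$; naturality of $\eta$ then yields $\eta_\mathcal{H}=\sum_{w,U}G(U)\,\eta_{\star\triangleleft w}\,F(U)^*$, so $\eta_\mathcal{H}=0$, whence $\eta=0$.

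The step I expect to be the main obstacle --- or at least the one requiring the most care --- is the first: pinning down exactly which reduced words give legitimate iterated actions $\star\triangleleft w$ and checking that the bookkeeping with the reduction conventions (the interplay of $W^\ell$, $W^r$, the colon operations, and the role of the units $\epsilon_{i|a}$) really does produce each simple object once and only once. Everything after that is a routine diagram chase using the coherence built into Definition~\ref{def:c-linear-category}.
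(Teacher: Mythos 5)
Your proposal is correct and follows essentially the same route as the paper's proof: realise every irreducible of $\mathcal{X}_{bc}$ as $\star\triangleleft w$ for a reduced word $w$, unroll $\eta_{\star\triangleleft w}$ from $\eta_\star$ via the compatibility diagram \eqref{eqn:right-commutant-morphism-diagram} (this is exactly \eqref{eqn:c-wr-morphisms-unroll}), and then recover $\eta_{\mathcal{H}}$ for general $\mathcal{H}$ by naturality over an isotypic decomposition, exactly as in \eqref{eqn:c-wr-morphisms-decompose}. Your explicit check that only the $\gamma=\epsilon_{i|e}$ summand survives in $(\star\triangleleft w')\triangleleft_i\pi$ just spells out a detail the paper leaves implicit.
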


The conclusion of this section can then be summarised as follows.
\begin{proposition}
\label{thm:c-wr-embedding}
    For every $i\in I$ and $a,b\in B$ the operation
    \[
        \Phi_i: \mathcal{C}_{i|ab}\to \mathcal{C}^\wr_{ab}:
        \alpha\mapsto \left(\alpha\triangleright_i-, (\Sigma^{ij}_{\alpha,-,-})_{j\in I}\right)
    \]
    defines a fully faithful unitary 2-functor from $\mathcal{C}_i$ to $\mathcal{C}^\wr$ as follows.

    For all $a,b\in B$, a 2-cell $\phi:\alpha\to\beta$ in $\mathcal{C}_{ab}$ is mapped to a 2-cell $F_i(\phi)$ in $\mathcal{C}_{ab}$ by
    \[
        \Phi_i: \Phi_i(\alpha)\to \Phi_i(\beta): \Phi_i(\phi)_{\mathcal{H}} = \phi\triangleright_i \I_{\mathcal{H}}
    \]
    for all $c\in B$ and $\mathcal{H}\in \mathcal{X}_{bc}$.
    The associator and unitor maps
    \[
        \Phi_i^{(2)}: \Phi_i(\alpha)\tensorhat \Phi_i(\beta)\to\Phi_i(\alpha\beta)\,,\quad
        \Phi_i^{(0)}: \Phi_i(\epsilon_{i|a})\to (\catid, (\I_{-})_{j\in J})
    \]
    are those defined in \eqref{eqn:left-module-associator} and \eqref{eqn:left-module-unitor}.
\begin{proof}
    The only part that still requires an argument is the assertion that $\Phi_i$ is fully faithful.
    
    The faithfulness is in fact automatic because $\Phi_i$ is a unitary 2-functor on a rigid $C^*$-2-category, but the direct proof is not difficult. 
    Fix $a,b\in B$ and $\alpha,\beta\in\mathcal{C}_{i|ab}$, and let $\phi:\alpha\to\beta$ be a 2-cell in $\mathcal{C}_{i|ab}$.
    If $\Phi_i(\phi)=0$, then certainly 
    \[
        0=\Phi_i(\phi)_\star\left((\delta_{\triangleright}^i)^{\alpha,\star}_{\pi,\epsilon_{i|b},\emptyword}(V\otimes 1)\right) = (\delta_{\triangleright}^i)_{\pi,\epsilon_{i|b},\emptyword}(\phi V\otimes 1)
    \]
    and hence $\phi V=0$ for $\pi\in\Irr(\mathcal{C}_{ab})$ and $V\in (\alpha,\pi)$.
    This means that $\phi=0$.

    By faithfulness and Lemma~\ref{thm:c-wr-morphisms}, we moreover have that
    \begin{align*}
        \dim_\CC (\alpha,\beta)
        &\leq \dim_\CC (\Phi_i(\alpha),\Phi_i(\beta))
        \leq \dim_\CC (\alpha\triangleright\star, \beta\triangleright\star)\\
        &=\sum_{\pi\in\Irr(\mathcal{C}_{i|ab})} \mult(\pi,\alpha)\mult(\pi,\beta)
        =\dim_\CC(\alpha,\beta)
    \end{align*}
    so all inequalities are equalities, implying that $\Phi_i$ is also full.
\end{proof}
\end{proposition}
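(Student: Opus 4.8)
The plan is to dispatch the structural assertions by appealing to the lemmas of this section, and then to concentrate on full faithfulness, which is the only substantive point. First, $\Phi_i(\alpha)=\bigl(\alpha\triangleright_i-,(\Sigma^{ij}_{\alpha,-,-})_{j\in I}\bigr)$ is a legitimate object of $\mathcal{C}^\wr_{ab}$: this is exactly the first assertion of Lemma~\ref{thm:left-right-bimodule}, which records that $(\alpha\triangleright_i-,\Sigma^{ij}_{\alpha,-,-})\in\Hom_{-\mathcal{C}_j}(\mathcal{X}_{b-},\mathcal{X}_{a-})$ for every $j\in I$. For a 2-cell $\phi:\alpha\to\beta$ in $\mathcal{C}_{i|ab}$ I would check that $\Phi_i(\phi)_{\mathcal{H}}=\phi\triangleright_i\I_{\mathcal{H}}$ defines a 2-cell in $\mathcal{C}^\wr$, that is, a natural family satisfying the compatibility diagram~\eqref{eqn:right-commutant-morphism-diagram} for each $\Sigma^{ij}$; this is a direct diagram chase on the defining formulas~\eqref{eqn:left-right-bimodule-generic} and~\eqref{eqn:left-right-bimodule-edge}, entirely parallel to the computation indicated at the end of the proof of Lemma~\ref{thm:left-right-bimodule}, and functoriality of $\Phi_i$ on 2-cells is immediate from the formula for $\phi\triangleright_i T$. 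Finally, Lemma~\ref{thm:left-right-bimodule} already tells us that the module associators $\mu^\ell$ and unitors $\upsilon^\ell$ of Lemma~\ref{thm:factor-actions} are morphisms in the relevant categories $\Hom_{-\mathcal{C}_j}$, hence 2-cells in $\mathcal{C}^\wr$, and are unitary by construction; taking $\Phi_i^{(2)}=\mu^\ell$ as in~\eqref{eqn:left-module-associator} and $\Phi_i^{(0)}=\upsilon^\ell$ as in~\eqref{eqn:left-module-unitor}, the pentagon and triangle axioms for a unitary 2-functor unwind, component by component over the grading of $\mathcal{X}$, to the left-module coherence diagrams~\eqref{eqn:left-module-assoc-diagram} and~\eqref{eqn:left-module-unitor-diagram} checked in Lemma~\ref{thm:factor-actions}. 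This establishes that $\Phi_i$ is a unitary 2-functor.

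For full faithfulness the key input is Lemma~\ref{thm:c-wr-morphisms}. For faithfulness I would evaluate $\Phi_i(\phi)$ at $\star$: the defining formula for $\triangleright_i$ shows that $\alpha\triangleright_i\star$ carries, in the component indexed by the word $[\pi]_i$ with $\pi\in\Irr(\mathcal{C}_{i|ab})$, a copy of $(\alpha,\pi)$ on which $\Phi_i(\phi)_\star$ acts by post-composition with $\phi$, so $\Phi_i(\phi)=0$ forces $\phi=0$ because intertwiners into irreducibles span. (Alternatively one may quote the general fact that a unitary 2-functor between rigid $C^*$-2-categories is automatically faithful.) For fullness I would combine this with Lemma~\ref{thm:c-wr-morphisms}: restriction to $\star$ embeds $(\Phi_i(\alpha),\Phi_i(\beta))$ into $(\alpha\triangleright_i\star,\beta\triangleright_i\star)$, while the explicit description of $\alpha\triangleright_i\star$ and $\beta\triangleright_i\star$ gives
\[
    \dim_\CC(\alpha\triangleright_i\star,\beta\triangleright_i\star)=\sum_{\pi\in\Irr(\mathcal{C}_{i|ab})}\mult(\pi,\alpha)\,\mult(\pi,\beta)=\dim_\CC(\alpha,\beta)
\]
by semisimplicity. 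Chaining $\dim_\CC(\alpha,\beta)\le\dim_\CC(\Phi_i(\alpha),\Phi_i(\beta))\le\dim_\CC(\alpha\triangleright_i\star,\beta\triangleright_i\star)=\dim_\CC(\alpha,\beta)$ forces all inequalities to be equalities, so $\Phi_i$ is full.

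I expect the main obstacle to be the diagonal case $i=j$ in the verification that $\Phi_i(\phi)$ is compatible with $\Sigma^{ii}$: the ``edge'' formula~\eqref{eqn:left-right-bimodule-edge} carries an auxiliary sum over an orthonormal basis of intertwiners together with quantum-dimension normalisations, and propagating these through the compatibility diagram is the only genuinely delicate piece of bookkeeping; everything else reduces mechanically to the lemmas already established.
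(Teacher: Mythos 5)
Your proposal is correct and follows essentially the same route as the paper: the structural claims are delegated to Lemmas~\ref{thm:factor-actions} and~\ref{thm:left-right-bimodule}, faithfulness is checked by evaluating $\Phi_i(\phi)$ at $\star$ and identifying the $[\pi]_i$-component of $\alpha\triangleright_i\star$ with $(\alpha,\pi)$, and fullness follows from the same dimension chain $\dim_\CC(\alpha,\beta)\le\dim_\CC(\Phi_i(\alpha),\Phi_i(\beta))\le\dim_\CC(\alpha\triangleright_i\star,\beta\triangleright_i\star)=\dim_\CC(\alpha,\beta)$ via Lemma~\ref{thm:c-wr-morphisms}. The only cosmetic difference is that you re-verify compatibility of $\phi\triangleright_i\I$ with the $\Sigma^{ij}$ (including the $i=j$ edge case), which is already contained in the naturality statement of Lemma~\ref{thm:left-right-bimodule} and is simply taken for granted in the paper.
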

We can now construct the free product inside\footnote{In fact, the subcategory of $\mathcal{C}^\wr$ that we are about to construct is monoidally equivalent to $\mathcal{C}^\wr$, but the specific form we use below is somewhat more convenient when proving the universal property.} $\mathcal{C}^\wr$; this is the content of the next section.

\section{The free product 2-category}
We preserve the notation used in the previous section.
Lemma~\ref{thm:factor-actions} allows us to view objects in any of the $\mathcal{C}_i$'s as objects in a $C^*$-2-category of functors $\mathcal{C}^\wr$.
We claim that this generates a $C^*$-2-category $\mathcal{C}$ that satisfies the universal property of Theorem~\ref{thm:free-product-2cat-up}.

More precisely, given $a,b\in B$, the 1-cells in $\mathcal{C}_{ab}$ are finite formal sums $\bigoplus_k w_k$, where $w_k$ is an arbitrary (i.e.\@ possibly non-reduced) word of type $(a,b)$ in $(\mathcal{C}_i)_{i\in I}$.
To define morphisms between 1-cells in $\mathcal{C}$, we first have to realise them as 1-cells in $\mathcal{C}^\wr$.
The following recursive definition produces an object $(w\triangleright -, (c^j_w)_{j\in I})$ in $\mathcal{C}^\wr_{ab}$ for all words $w$ of type $(a,b)$.
\begin{itemize}
\item If $a=b$ and $w=\emptyword$, then $\emptyword\triangleright -$ is the unit object in $\mathcal{C}^\wr_{aa}$.
\item If $w=[\alpha]_i w'$ for $\alpha\in\mathcal{C}_{i|ad}$ and $w'$ a word of type $(d,b)$, then
\begin{align*}
    w\triangleright - &= \alpha\triangleright_i (w'\triangleright - )\,,\qquad\text{and}\\
    c^j_w &= \Sigma^{ij}_{\alpha,-,-}\boxdot c^j_{w'}\numberthis\label{eqn:cwr-composition}
\end{align*}
for all $j\in I$, where the $\boxdot$-operation is taken as in \eqref{eqn:commutant-morphism-composition}.
\end{itemize}
This definition extends to sums of words in the obvious way---we therefore usually work with individual words instead of sums.
We denote the object in $\mathcal{C}^\wr_{ab}$ associated with $w\in\mathcal{C}_{ab}$ by $\mathcal{L}_w$.

Given $a,b\in B$ and $w,w'\in\mathcal{C}_{ab}$, the morphisms from $w$ to $w'$ are by definition morphisms from $\mathcal{L}_w$ to $\mathcal{L}_{w'}$ in $\mathcal{C}^\wr_{ab}$.
In particular, Lemma~\ref{thm:c-wr-morphisms} provides us with a $C^*$-norm on 2-cells in $\mathcal{C}$.
The composition of two words $w$ of type $(a,b)$ and $w'$ of type $(b,c)$ is the word of type $(a,c)$ given by concatenation, i.e.\@ $ww'$.
By definition, we have that
\[
    \mathcal{L}_w\tensorhat \mathcal{L}_{w'} = \mathcal{L}_{ww'}
\]
with strict equality.
It follows that the composition of 1-cells in $\mathcal{C}$ inherits its functorial structure from $\mathcal{C}^\wr$.

Before proving Theorem~\ref{thm:free-product-2cat-up}, we can already show that $\mathcal{C}$ satisfies the usual ``working definition'' of a free product.
\begin{proposition}
\label{thm:poor-mans-free-product}
    The assignment $\Phi_i: \mathcal{C}_{i|ab}\to \mathcal{C}_{ab}: \alpha\mapsto [\alpha]_i$, $a,b\in B$ canonically defines a fully faithful unitary 2-functor from $\mathcal{C}_i$ to $\mathcal{C}$.

    For any reduced word $w$ of type $(a,b)$, the corresponding object $w\in\mathcal{C}_{ab}$ is irreducible.
    Moreover, these irreducible objects are pairwise non-isomorphic, and exhaust all irreducible objects in $\mathcal{C}_{ab}$ up to isomorphism.
\begin{proof}
    The first part is a rewording of Proposition~\ref{thm:c-wr-embedding}, and we preserve the notation $\Phi_i$ used there.
    The second claim follows from Lemma~\ref{thm:c-wr-morphisms}.
    Indeed, it is immediate from Lemma~\ref{thm:c-wr-morphisms} that an object $(F, (c^j)_{j\in I})$ in $\mathcal{C}^\wr_{ab}$ is irreducible if $F(\star)\in\mathcal{X}_{ab}$ is one-dimensional, where $\star$ is the distinguished object in $\mathcal{X}_{bb}$.
    By induction on the length of $w$, it is easy to see that $w\triangleright \star$ is one-dimensional for all reduced words $w$.
    If $w'$ is another reduced word of type $(a,b)$, then there are no morphisms from $w\triangleright\star$ to $w'\triangleright\star$ in $\mathcal{X}_{ab}$ unless $w=w'$, so $w\not\cong w'$ in $\mathcal{C}$ whenever $w\neq w'$.

    To prove that these exhaust all irreducibles, let $v$ be a general word of type $(a,b)$ such that $v$ is an irreducible object in $\mathcal{C}_{ab}$.
    We argue by induction on the length that $v$ must be (isomorphic to) a reduced word of length less than or equal to that of $v$.

    When $v$ is the empty word, there is nothing to prove.
    Moreover, when $v=[\alpha]_i$ for $\alpha\in\mathcal{C}_{i|ab}$, the functoriality of $\Phi_i$ guarantees that $\alpha$ must be irreducible, which clearly implies the claim.
    For a word $v$ of length $n\geq 2$, we can write $v=[\alpha]_i [\beta]_j v'$, where $\alpha\in\mathcal{C}_{i|ac}$, $\beta\in\mathcal{C}_{j|cd}$ and $v'$ is a word of type $(d,b)$.
    A composition of two $1$-cells in a $C^*$-2-category can only be irreducible if both factors are.
    By appealing to the induction hypothesis, we therefore know that $[\alpha]_i$ and $[\beta]_jv'$ are isomorphic to reduced words $w$ and $w'$ of type $(a,c)$ and $(c,d)$, respectively.
    Hence, $v$ is isomorphic to the composition of $w$ and $w'$.

    If either of $w$ or $w'$ are empty, then we are done, so assume that $w$ and $w'$ are both non-empty words.
    This implies that $w=[\pi]_i$ for some $\pi\in\mathcal{C}_{i|ac}$.
    If $w'=[\pi']_iw''$ with $\pi'\in\Irr(\mathcal{C}_{i|ce})$ and $w''\in W_{i|ed}^\ell$, then the fact that $\Phi_i$ is a 2-functor implies that
    \[
        v\cong [\pi\pi']_i w''.
    \]
    Since $w''$ has length at most $n-2$, we can apply the induction hypothesis to arrive at the conclusion.
    Finally, if the first letter of $w'$ is \textit{not} from $\mathcal{C}_i$, then $ww'$ is itself reduced and isomorphic to $v$, so we are done.
\end{proof}
\end{proposition}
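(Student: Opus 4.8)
The plan is to obtain all four claims as essentially formal consequences of Proposition~\ref{thm:c-wr-embedding} and Lemma~\ref{thm:c-wr-morphisms}, working throughout inside $\mathcal{C}^\wr_{ab}$ and converting every question about $2$-cells into a question about grading-preserving maps of $W_{ab}$-graded Hilbert spaces via $\eta\mapsto\eta_\star$. The first claim is then almost immediate. A single letter $[\alpha]_i$ is a word, and by the recursive definition of $\mathcal{L}_{(-)}$ the object $\mathcal{L}_{[\alpha]_i}$ is literally the object $\bigl(\alpha\triangleright_i-,(\Sigma^{ij}_{\alpha,-,-})_{j\in I}\bigr)$ produced by Proposition~\ref{thm:c-wr-embedding}; hence $\Phi_i\colon\mathcal{C}_i\to\mathcal{C}$ is nothing but the functor built there, followed by the inclusion $\mathcal{C}\hookrightarrow\mathcal{C}^\wr$. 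Since $\mathcal{C}_{ab}$ was \emph{defined} so that its $2$-cells are all of the $\mathcal{C}^\wr$-morphisms between its objects, fully faithfulness on $2$-cells and the coherence data $\Phi_i^{(2)}$, $\Phi_i^{(0)}$ (the module associator and unitor of Lemma~\ref{thm:factor-actions}) are inherited verbatim; nothing further needs to be checked.

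For irreducibility, I would use that by Lemma~\ref{thm:c-wr-morphisms} the map $\eta\mapsto\eta_\star$ embeds $\End_{\mathcal{C}}(w)=\End_{\mathcal{C}^\wr_{ab}}(\mathcal{L}_w)$ as a unital $*$-subalgebra of $\End_{\mathcal{X}_{ab}}(w\triangleright\star)$, so that it suffices to show $w\triangleright\star$ is one-dimensional whenever $w$ is reduced. I would prove, by induction on the length of the reduced word $w$ of type $(a,b)$, the slightly stronger statement that $w\triangleright\star$ is one-dimensional and concentrated in the single degree $w$. The base case $w=\emptyword$ holds since $\emptyword\triangleright-$ is the identity functor; for the inductive step one writes $w=[\alpha]_i w'$ with $\alpha$ a nontrivial irreducible and $w'$ a reduced word not beginning with a letter of type $i$, so that $w\triangleright\star=\alpha\triangleright_i(w'\triangleright\star)$, and plugging the inductive description of $w'\triangleright\star$ into the defining formula of Lemma~\ref{thm:factor-actions} kills every summand except $\gamma=\epsilon_{i|d}$, $\pi=\alpha$, leaving a one-dimensional space in degree $[\alpha]_iw'=w$ (see \eqref{eqn:left-cons}). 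Pairwise non-isomorphy then follows from the same embedding: $\Hom_{\mathcal{C}}(w,w')$ sits inside $\Hom_{\mathcal{X}_{ab}}(w\triangleright\star,w'\triangleright\star)$, which vanishes when $w\neq w'$ because the two graded Hilbert spaces are supported in distinct degrees.

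For exhaustion, I would first reduce to a single word: an arbitrary $1$-cell of $\mathcal{C}_{ab}$ is a finite direct sum $\bigoplus_k w_k$ of words, and $\mathcal{C}_{ab}$ contains the block projections onto the summands, so an irreducible $1$-cell is isomorphic to a single (nonzero) word. It then remains to show, by induction on the length $n$, that a word $v=[\alpha_1]_{i_1}\cdots[\alpha_n]_{i_n}$ which is irreducible in $\mathcal{C}_{ab}$ is isomorphic to a reduced word of length at most $n$. The case $n=0$ is vacuous; for $n=1$ the first claim forces $\alpha_1$ to be an irreducible object of $\mathcal{C}_{i_1}$, hence isomorphic either to $\epsilon_{i_1|a}$ --- in which case $v\cong\emptyword$ through $\Phi_{i_1}^{(0)}$ --- or to a nontrivial element of $\Irr(\mathcal{C}_{i_1|ab})$, giving a reduced word of length one. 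For $n\geq 2$ write $v=[\alpha_1]_{i_1}\otimes u$ with $u=[\alpha_2]_{i_2}\cdots[\alpha_n]_{i_n}$; since both factors admit conjugates in $\mathcal{C}$ (being tensor products of objects from the copies $\Phi_i(\mathcal{C}_i)$), a composite of $1$-cells can be irreducible only if both factors are, so by the induction hypothesis $[\alpha_1]_{i_1}\cong w$ and $u\cong w'$ for reduced words $w$, $w'$ of lengths $\leq 1$ and $\leq n-1$, whence $v\cong ww'$. If $w$ or $w'$ is empty, or if the first letter of $w'$ is not of type $i_1$, then $ww'$ is already reduced; otherwise $w=[\pi]_{i_1}$ and $w'=[\pi']_{i_1}w''$, and since $\Phi_{i_1}$ is a $2$-functor, $v\cong[\pi\pi']_{i_1}w''$, a word of length $\leq n-1$ which is irreducible (being isomorphic to $v$), and the induction hypothesis applies.

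The genuinely non-trivial part is this last induction, and I expect the main obstacle to be precisely the ``clash'' case just described, where concatenating the two reduced words supplied by the induction hypothesis forces two adjacent letters from the same factor. Multiplying them inside that factor produces a strictly shorter word, so the recursion terminates; but one must keep in mind that the merged object $\pi\pi'$ need not be irreducible and may even be isomorphic to the local unit $\epsilon_{i_1|a}$, in which case the word collapses further. Phrasing the induction hypothesis robustly --- as ``every irreducible word of length $\leq n$ is isomorphic to a reduced word of length $\leq n$'' --- is exactly what makes all of these outcomes feed back into it. The irreducibility and non-isomorphy statements are comparatively routine once Lemma~\ref{thm:c-wr-morphisms} is in hand, the one point requiring care being to propagate through the induction the \emph{degree} in which $w\triangleright\star$ is supported, not merely its total dimension.
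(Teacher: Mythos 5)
Your proposal is correct and follows essentially the same route as the paper: the first claim via Proposition~\ref{thm:c-wr-embedding}, irreducibility and pairwise non-isomorphy via Lemma~\ref{thm:c-wr-morphisms} together with an induction showing $w\triangleright\star$ is one-dimensional (and supported in degree $w$, which the paper leaves implicit), and exhaustion by the same length induction with the same splitting-and-merging case analysis. The extra details you supply (reduction from formal sums to single words, the clash case where $\pi\pi'$ may be reducible or trivial) are consistent with, and only make explicit, what the paper's proof tacitly uses.
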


Note that $\mathcal{C}$ is rigid, since $[\bar{\alpha}]_i$ is a conjugate for $[\alpha]_i$ in $\mathcal{C}$ for all 1-cells $\alpha$ in $\mathcal{C}_i$, which extends to all 1-cells in $\mathcal{C}$ by taking tensor products and direct sums.
In particular, the quantum dimension of a word $v=[\alpha_1]_{i_1}\cdots [\alpha_n]_{i_n}$ in $\mathcal{C}$ is given by $\qdim(v)=\qdim(\alpha_1)\cdots\qdim(\alpha_n)$.

To upgrade Proposition~\ref{thm:poor-mans-free-product} into the universal property required by Theorem~\ref{thm:free-product-2cat-up}, some more effort is required.
The main technical ingredient in the proof is the ``assembly map'' we construct below.

\begin{lemma}
\label{thm:assembly-map}
    Suppose that $\mathcal{D}$ is a strict $C^*$-2-category and $(g,\Psi_i):\mathcal{C}_i\to\mathcal{D}$ a unitary 2-functor for all $i\in I$.
    Given $a,b\in B$ and a word $v=[\alpha_1]_{i_1}\cdots [\alpha_n]_{i_n}$ of type $(a,b)$, define an object $\Psi(v)$ in $\mathcal{D}_{ab}$ by
    \[
        \Psi(v)=\Psi_{i_1}(\alpha_1)\cdots \Psi_{i_n}(\alpha_n)\,.
    \]
    For all $a,b\in B$, all words $v$ of type $(a,b)$ and all $w\in W_{ab}$, there exists a linear map
    \[
        \Psi_{v,w}: (v\triangleright\star)_w \to (\Psi(v), \Psi(w))
    \]
    satisfying the following relations.
    \begin{itemize}
        \item For all words $v,w,w'$ of type $(a,b)$ with $w,w'$ reduced and all $\zeta\in (v\triangleright \star)_w$, $\zeta'\in\ (v\triangleright\star)_{w'}$, we have that
        \begin{equation}
        \label{eqn:assembly-map-orthogonality}
            (\Psi_{v,w}\zeta)^* (\Psi_{v,w'}\zeta')
            = \begin{cases}
               \qdim(w)^{-1}\langle\zeta',\zeta\rangle\I_{\Psi(w)} & w=w'\,,\\
               0 & \text{otherwise}\,.
            \end{cases}
        \end{equation}
        in $(\Psi(w),\Psi(w'))$.
        \item For $a,b,c,d\in B$, $\alpha\in\mathcal{C}_{i|ab}$, words $v,v'$ of type $(b,c)$, $w\in W_{i|dc}^\ell$, $\gamma\in\Irr(\mathcal{C}_{i|bd})$, $\xi\in (v\triangleright\star)_{[\gamma]_i:w}$ and $\xi'\in (v'\triangleright\star)_{[\gamma]_i:w}$, the identity
        \begin{align*} 
            \sum_{\substack{\pi\in\Irr(\mathcal{C}_{i|ad}) \\ V\in\onb(\alpha\gamma,\pi)}} 
            &\qdim(\pi) \Psi_{[\alpha]_i v,[\pi]_i:w}\left((\delta_{\triangleright}^i)^{\alpha,v\triangleright\star}_{\pi,\gamma,w}(V\otimes\xi)\right)
            \Psi_{[\alpha]_i v',[\pi]_i:w} \left((\delta_{\triangleright}^i)^{\alpha,v'\triangleright\star}_{\pi,\gamma,w}(V\otimes\xi')\right)^*\\
            &=\qdim(\gamma)\I_{\Psi_i(\alpha)} \otimes \left((\Psi_{v,[\gamma]_i:w}\xi)(\Psi_{v',[\gamma]_i:w}\xi')^*\right)\,,\numberthis\label{eqn:assembly-map-left-identity}
        \end{align*}
        holds in $(\Psi_i(\alpha)\Psi(v),\Psi_i(\alpha)\Psi(v'))$.
        \item
        Choose $a,b,c\in B$, $\alpha\in\mathcal{C}_{i|ab}$, and let $T_\alpha$ be the canonical unitary isomorphism from $\star\triangleleft_i\alpha$ to $\alpha\triangleright_i\star$.
        For any word $u$ of type $(c,a)$, we then define a natural unitary
        \begin{equation}
        \label{eqn:sigma-rearrange}
            \tilde{\Sigma}_{u,\alpha}:
            (u\triangleright\star)\triangleleft_i \alpha\to u[\alpha]_i\triangleright\star:
            \tilde{\Sigma}_{u,\alpha}=(u\triangleright T_\alpha) (c^i_u)_{\star,\alpha}\,;
        \end{equation}
        see also \eqref{eqn:cwr-composition}.  
        For $d\in B$, words $v,v'$ of type $(c,a)$, $w\in W_{i|cd}^r$, $\gamma\in\Irr(\mathcal{C}_{i|da})$, $\xi\in(v\triangleright\star)_{w:[\gamma]_i}$ and $\xi'\in(v'\triangleright\star)_{[\gamma]_i:w}$, the right-handed version of \eqref{eqn:assembly-map-left-identity} reads as
        \begin{align*}
            \sum_{\substack{\pi\in\Irr(\mathcal{C}_{i|db}) \\ V\in\onb(\gamma\alpha,\pi)}} 
            &\qdim(\pi) \Psi_{v[\alpha]_i,w:[\pi]_i}\left(\tilde{\Sigma}_{v,\alpha}(\delta_{\triangleleft}^i)^{v\triangleright\star,\alpha}_{w,\pi,\gamma}(\xi\otimes V)\right)
            \Psi_{v'[\alpha]_i,w:[\pi]_i}\left(\tilde{\Sigma}_{v',\alpha}(\delta_{\triangleleft}^i)^{v\triangleright\star,\alpha}_{w,\pi,\gamma}(\xi'\otimes V)\right)^*\\
            &=\qdim(\gamma)\left((\Psi_{v,w:[\gamma]_i}\xi)(\Psi_{v',w:[\gamma]_i}\xi')^*\right)\otimes\I_{\Psi_i(\alpha)}\,.\numberthis\label{eqn:assembly-map-right-identity}
        \end{align*}
    \end{itemize}
\begin{proof}
    Denote the unit in $\mathcal{D}_{g(a)g(a)}$ by $\epsilon_{g(a)}$.
    Throughout the proof, we assume that $\Psi^{(0)}_i:\Psi_i(\epsilon_{i|a})\to \epsilon_{g(a)}$ is the identity map everywhere, for notational simplicity.
    Accounting for it explicitly is straightforward, so the lemma remains true in general.

    In order to define $\Psi_{v,w}$, we proceed by induction on the length of $v$.
    For $a=b$ and $v=\emptyword$, we map $1\in\star_{\emptyword}$ to the identity on the unit in $\mathcal{D}_{aa}$ if $w=\emptyword$, and use the zero map otherwise.

    Now take $v=[\alpha]_iv'$ with $c\in B$, $\alpha\in\mathcal{C}_{i|ac}$, and $v'$ a word of type $(c,b)$.
    Write $w=[\pi]_i:w'$ with $d\in B$, $w\in W_{i|db}^\ell$ and $\pi\in\Irr(\mathcal{C}_{i|ad})$, and put
    \begin{align*}
        \Psi_{v,w}: &(\alpha\triangleright_i (v'\triangleright \star))_w\to (\Psi_i(\alpha)\Psi(v'),\Psi_i(\pi)\Psi(w')):\\
        &\ (\delta_{\triangleright}^i)_{\pi,\gamma,w'}^{\alpha,v'\triangleright\star}(V\otimes\xi)
        \mapsto\qdim(\gamma)^{1/2}(\I_{\Psi_i(\alpha)}\otimes \Psi_{v',[\gamma]_i:w'}\xi)\left((\Psi^{(2)}_i)^*(\Psi_i(V))\otimes\I_{\Psi(w')}\right)\,.
    \end{align*}
    Here, $\gamma\in\Irr(\mathcal{C}_{i|bd})$, $V\in (\alpha\gamma,\pi)$ and $\xi\in (v'\triangleright\star)_{[\gamma]_i:w}$.

    The relation \eqref{eqn:assembly-map-orthogonality} can also be proven by induction on the length of $v$.
    For $v=\emptyword$, the statement is clear.
    Write $w=[\pi]_i:u$, $w'=[\pi']_i:u'$ for $d,d'\in B$, $u\in W_{i|db}^\ell$, $u'\in W_{i|d'b}^\ell$, $\pi\in\Irr(\mathcal{C}_{i|ad})$ and $\pi'\in\Irr(\mathcal{C}_{i|ad'})$.

    \noindent%
    Now choose $\gamma\in\Irr(\mathcal{C}_{i|bd})$, $\gamma'\in\Irr(\mathcal{C}_{i|bd'})$, $\xi\in (v'\triangleright\star)_{[\gamma]_i:u}$, $\xi'\in (v'\triangleright\star)_{[\gamma']_i:u'}$, $V\in (\alpha\gamma,\pi)$ and $V'\in(\alpha\gamma',\pi')$.
    To check \eqref{eqn:assembly-map-orthogonality} for the vectors
    \[
        \zeta=(\delta^i_{\triangleright})^{\alpha,v'\triangleright\star}_{\pi,\gamma,u}(V\otimes\xi)
        \qquad\text{and}\qquad
        \zeta'=(\delta^i_{\triangleright})^{\alpha,v'\triangleright\star}_{\pi',\gamma',u'}(V'\otimes\xi')\,
    \]
    we make the following computation in $(\Psi_i(\pi)\Psi(u),\Psi_i(\pi')\Psi(u'))$ based on Schur's lemma and the induction hypothesis:
    \begin{align*}
        \qdim(&\gamma)^{1/2}\qdim(\gamma')^{1/2}
           \left(\Psi_i(V^*)\Psi^{(2)}_i\otimes\I_{\Psi(u)}\right) (\I_{\Psi_i(\alpha)}\otimes (\Psi_{v',[\gamma]_i:u}\xi)^*(\Psi_{v',[\gamma']_i:u'}\xi'))\\
        &\qquad\qquad\qquad\qquad\qquad \left((\Psi^{(2)}_i)^*(\Psi_i(V'))\otimes\I_{\Psi(u')}\right)\\
        &=\delta_{\gamma,\gamma'}\delta_{u,u'} \qdim(u)^{-1}\langle \xi',\xi\rangle \left(\Psi_i(V^*)\Psi^{(2)}_i(\Psi^{(2)}_i)^*(\Psi_i(V'))\otimes\I_{\Psi(u)}\right)\\
        &=\delta_{\gamma,\gamma'} \qdim(u)^{-1}\langle \xi',\xi\rangle \Psi_i(V^*V')\otimes\I_{\Psi(u)}\\
        &=\qdim(u)^{-1}\qdim(\pi)^{-1}\delta_{\gamma,\gamma'}\delta_{u,u'}\delta_{\pi,\pi'} \langle \xi',\xi\rangle \Tr_{\pi}(V^*V')\I_{\Psi_i(\pi)\Psi(u)}\\
        &=\qdim(w)^{-1}\delta_{w,w'}\langle (\delta^i_{\triangleright})^{\alpha,v'\triangleright\star}_{\pi',\gamma',u'}(V'\otimes\xi'), (\delta^i_{\triangleright})^{\alpha,v'\triangleright\star}_{\pi,\gamma,u}(V\otimes\xi)\rangle\I_{\Psi_i(\pi)\Psi(u)}\,.
    \end{align*}
    This proves \eqref{eqn:assembly-map-orthogonality}.
    The identity \eqref{eqn:assembly-map-left-identity} follows immediately from the definitions.
    Similarly, \eqref{eqn:assembly-map-right-identity} is an immediate consequence of the fact that
    \begin{equation}
    \label{eqn:assembly-map-right-version}
        \Psi_{v[\alpha]_i,w:[\pi]_i}\left(\tilde{\Sigma}_{v,\alpha}(\delta_{\triangleleft}^i)^{v\triangleright\star,\alpha}_{w,\pi,\gamma}(\xi\otimes V)\right)
        =\qdim(\gamma)^{1/2} (\Psi_{v,w:[\gamma]_i}\xi,\I_{\Psi_i(\alpha)})\left(\I_{\Psi(w)}\otimes(\Psi_i^{(2)})^*(\Psi_i(V))\right)
    \end{equation}
    holds in $(\Psi(v)\Psi_i(\alpha),\Psi(w)\Psi_i(\pi))$
    for $a,b,c,d\in B$, $\alpha\in\mathcal{C}_{i|ab}$, any word $v$ of type $(c,a)$, $w\in W_{i|cd}$, $\pi\in\Irr(\mathcal{C}_{i|db})$, $\gamma\in\Irr(\mathcal{C}_{i|da})$, $\xi\in (v\triangleright\star)_{w:[\pi]_i}$ and $V\in (\alpha\gamma,\pi)$.
    The identity \eqref{eqn:assembly-map-right-version} clearly holds when $v=\emptyword$.
    Proceeding by induction on the length of $v$, write $v=[\beta]_jv'$.
    Using the fact that $c_v^i=\Sigma_{\beta,-,-}^{ji}\boxdot c^i_{v'}$ together with the explicit expressions for $\Sigma_{\beta,-,-}^{ji}$ given in \eqref{eqn:left-right-bimodule-generic} and \eqref{eqn:left-right-bimodule-edge} one readily obtains \eqref{eqn:assembly-map-right-version} after a straightforward (albeit somewhat tedious) computation.
\end{proof}
\end{lemma}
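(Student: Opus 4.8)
The plan is to build the maps $\Psi_{v,w}$ recursively on the length of $v$, following the inductive description of $w\triangleright-$, and afterwards to verify the three stated identities in the order \eqref{eqn:assembly-map-orthogonality}, \eqref{eqn:assembly-map-left-identity}, \eqref{eqn:assembly-map-right-identity}. Since carrying the isomorphisms $\Psi_i^{(0)}$ around changes nothing essential, I would first assume $\Psi_i^{(0)}=\id$ throughout. For the empty word I send $1\in\star_\emptyword$ to the identity $2$-cell of $\Psi(\emptyword)=\epsilon_{g(a)}$ when $w=\emptyword$ and to $0$ otherwise. For $v=[\alpha]_iv'$ with $\alpha\in\mathcal{C}_{i|ac}$, the reduced word $w$ has a unique decomposition $w=[\pi]_i:w'$ with $\pi\in\Irr(\mathcal{C}_{i|ad})$ and $w'\in W_{i|db}^\ell$, and by Lemma~\ref{thm:factor-actions} the graded piece $(v\triangleright\star)_w$ is the orthogonal direct sum over $\gamma\in\Irr(\mathcal{C}_{i|cd})$ of the spaces $(\alpha\gamma,\pi)\otimes(v'\triangleright\star)_{[\gamma]_i:w'}$. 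Because $[\gamma]_i:w'$ is again reduced, the inductively available map $\Psi_{v',[\gamma]_i:w'}$ takes values in $(\Psi(v'),\Psi_i(\gamma)\Psi(w'))$, and I would then define $\Psi_{v,w}$ on the $\gamma$-summand by
\[
    V\otimes\xi\ \mapsto\ \qdim(\gamma)^{1/2}\,\bigl(\I_{\Psi_i(\alpha)}\otimes\Psi_{v',[\gamma]_i:w'}\xi\bigr)\bigl((\Psi_i^{(2)})^*\Psi_i(V)\otimes\I_{\Psi(w')}\bigr).
\]
Linearity together with genuine orthogonality of the summands makes this well defined, and a bookkeeping check on domains and codomains shows that it lands in $(\Psi(v),\Psi(w))$ as required.

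Next I would establish the orthogonality relation \eqref{eqn:assembly-map-orthogonality}, again by induction on the length of $v$ with the empty word as a trivial base case. Writing $w=[\pi]_i:u$, $w'=[\pi']_i:u'$ and testing $(\Psi_{v,w}\zeta)^*(\Psi_{v,w'}\zeta')$ on vectors $\zeta,\zeta'$ supported on fixed data $(\pi,\gamma,u)$ and $(\pi',\gamma',u')$, three things happen in sequence: the central block $\Psi_i(V)^*\,\Psi_i^{(2)}(\Psi_i^{(2)})^*\,\Psi_i(V')$ collapses to $\Psi_i(V^*V')$ by unitarity of $\Psi_i^{(2)}$ and functoriality of $\Psi_i$; the induction hypothesis rewrites the $v'$-part as $\delta_{\gamma,\gamma'}\delta_{u,u'}\bigl(\qdim(\gamma)\qdim(u)\bigr)^{-1}\langle\xi',\xi\rangle$ times an identity, the $\qdim(\gamma)^{-1}$ being absorbed by the prefactor $\qdim(\gamma)^{1/2}\qdim(\gamma')^{1/2}$; and Schur's lemma on $(\pi,\pi')$ turns $\Psi_i(V^*V')$ into $\delta_{\pi,\pi'}\qdim(\pi)^{-1}\Tr_\pi(V^*V')\,\I_{\Psi_i(\pi)}$. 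Using $\qdim(w)=\qdim(\pi)\qdim(u)$ and $\langle V',V\rangle=\Tr_\pi(V^*V')$ one recognises the outcome as $\qdim(w)^{-1}\delta_{w,w'}\langle\zeta',\zeta\rangle\,\I_{\Psi(w)}$, which is \eqref{eqn:assembly-map-orthogonality}.

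The identity \eqref{eqn:assembly-map-left-identity} should then fall out directly from the definition: summing the defining formula over $\pi$ and $V\in\onb(\alpha\gamma,\pi)$ produces a factor $\sum_{\pi,V}\qdim(\pi)\,\Psi_i(V)\Psi_i(V)^*$, which equals $\I_{\Psi_i(\alpha\gamma)}$ because $\Psi_i$ is a fully faithful unitary $2$-functor and the completeness relation $\sum_{\pi}\sum_{V\in\onb(\alpha\gamma,\pi)}\qdim(\pi)\,VV^*=\I_{\alpha\gamma}$ holds in $\mathcal{C}_i$; conjugating by the unitary $\Psi_i^{(2)}$ turns it into $\I_{\Psi_i(\alpha)\Psi_i(\gamma)}$, and the surviving $\qdim(\gamma)$ reassembles the right-hand side of \eqref{eqn:assembly-map-left-identity}.

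Finally, for the right-handed identity \eqref{eqn:assembly-map-right-identity} I would first split off the auxiliary formula \eqref{eqn:assembly-map-right-version}; given that formula, the identity \eqref{eqn:assembly-map-right-identity} follows by exactly the completeness-relation argument used for the left-handed case, now applied inside $(\gamma\alpha,\gamma\alpha)$. The real content is therefore \eqref{eqn:assembly-map-right-version}, which I would prove by induction on the length of $v$: write $v=[\beta]_jv'$, use $c^i_v=\Sigma^{ji}_{\beta,-,-}\boxdot c^i_{v'}$ from \eqref{eqn:cwr-composition} to re-express the rearrangement unitary $\tilde\Sigma_{v,\alpha}$ of \eqref{eqn:sigma-rearrange} through $\tilde\Sigma_{v',\alpha}$, and plug in the explicit description of $\Sigma^{ji}_{\beta,-,-}$ from Lemma~\ref{thm:left-right-bimodule}. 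When $j\neq i$, or when $j=i$ but the word separating $\beta$ from $\alpha$ is nonempty, one substitutes the generic formula \eqref{eqn:left-right-bimodule-generic} and the claim drops out after relabelling indices. The step I expect to be the main obstacle is the edge case $j=i$ with the empty word, where $\Sigma^{ii}$ is governed instead by \eqref{eqn:left-right-bimodule-edge}: there one must expand the internal sum over $\sigma\in\Irr(\mathcal{C}_{i|bd})$ and $W\in\onb(\gamma\beta,\sigma)$, recombine $\Psi_i(W)$ with $(\Psi_i^{(2)})^*$ using the monoidal structure of $\Psi_i$, collapse the $W$-sum by semisimplicity, and match the square roots of quantum dimensions against those produced by the module associators of Lemma~\ref{thm:factor-actions}. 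Once \eqref{eqn:assembly-map-right-version} has been checked, the lemma is complete.
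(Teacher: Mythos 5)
Your proposal is correct and follows essentially the same route as the paper: the same recursive definition of $\Psi_{v,w}$ (empty word to the unit, then the $\qdim(\gamma)^{1/2}$-weighted formula through $(\Psi_i^{(2)})^*\Psi_i(V)$), the same inductive Schur-lemma computation for \eqref{eqn:assembly-map-orthogonality}, the completeness relation $\sum_{\pi,V}\qdim(\pi)VV^*=\I_{\alpha\gamma}$ for \eqref{eqn:assembly-map-left-identity} (which the paper leaves implicit), and the reduction of \eqref{eqn:assembly-map-right-identity} to the auxiliary identity \eqref{eqn:assembly-map-right-version} proved inductively via $c^i_v=\Sigma^{ji}_{\beta,-,-}\boxdot c^i_{v'}$ and the formulas \eqref{eqn:left-right-bimodule-generic}, \eqref{eqn:left-right-bimodule-edge}. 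The computation you defer in the edge case is exactly the one the paper also leaves as a ``straightforward (albeit somewhat tedious)'' verification, so no gap relative to the published argument.
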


We now prove that the $C^*$-2-category $\mathcal{C}$ we constructed in this section satisfies the universal property of Theorem~\ref{thm:free-product-2cat-up}.
The notation $\Psi(v)$ in the statement of the above lemma is suggestive; this is exactly how we will define the lift of the $\Psi_i$'s to $\mathcal{C}$.

\begin{proof}[Proof of Theorem~\ref{thm:free-product-2cat-up}]
    The unitary 2-functors $\Phi_i:\mathcal{C}_i\to\mathcal{C}$ are provided by Proposition~\ref{thm:poor-mans-free-product}.

    Suppose now that we are given a strict $C^*$-2-category $\mathcal{D}$ with unitary 2-functors $(g,\Psi_i):\mathcal{C}_i\to\mathcal{D}$.
    In this setup, we can even write down a strict 2-functor $(g,\Psi)$ from $\mathcal{C}$ to $\mathcal{D}$ that has the desired properties.
    On 1-cells, it is clear that we should define
    \[
        \Psi([\alpha_1]_{i_1}\cdots [\alpha_n]_{i_n})
        =\Psi_{i_1}(\alpha_1)\cdots \Psi_{i_n}(\alpha_n)\,.
    \]
    The empty word in $\mathcal{C}_{aa}$ is mapped to the tensor unit in $\mathcal{D}_{i|g(a)g(a)}$; in particular, $\Psi^{(0)}$ is the identity functor.

    The action of $\Psi$ on 2-cells is more involved, but Lemma~\ref{thm:assembly-map} does most of the heavy lifting at this point.
    For a 2-cell $\eta:v\to v'$ between words of type $(a,b)$, the corresponding map $\Psi(\eta)$ in $(\Psi(v'),\Psi(v))$ is given by
    \begin{equation}
    \label{eqn:universal-functor-morphism}
        \Psi(\eta) = \sum_{w\in W_{ab}} \sum_{\xi\in\onb((v\triangleright\star)_w)} \qdim(w) \Psi_{v',w}(\eta_\star\xi) (\Psi_{v,w}\xi)^*\,,
    \end{equation}
    where $\Psi_{v,w}$ is the assembly map defined in Lemma~\ref{thm:assembly-map}.

    The relation \eqref{eqn:assembly-map-orthogonality} ensures that \eqref{eqn:universal-functor-morphism} respects composition and involution of 2-cells.
    To verify that $\Psi$ preserves the identity and produces a strict 2-functor, we still have to check that
    \begin{equation}
    \label{eqn:universal-functor-tensor-compat}
        \Psi(\I_v\tensorhat\eta) = \I_{\Psi(v)}\otimes\Psi(\eta)
        \qquad\text{and}\qquad
        \Psi(\eta\tensorhat\I_{v'}) = \Psi(\eta)\otimes\I_{\Psi(v')}
    \end{equation}
    for all $a,b,c\in B$, all words $v_1, v_2$ of type $(c,a)$, 2-cells $\eta: v_1\to v_2$ and all words $v$ (resp.\@ $v'$) of type $(b,c)$ (resp.\@ $(a,b)$).
    Since the tensor products in $\mathcal{C}$ and $\mathcal{D}$ are strictly associative, it suffices to do this for words consisting of a single letter.
    The left identity in \eqref{eqn:universal-functor-tensor-compat} is then an immediate consequence of \eqref{eqn:assembly-map-left-identity}---this also implies that $\Psi$ preserves the identity map on all objects.

    The right identity in \eqref{eqn:universal-functor-tensor-compat} uses the fact that $\eta$ commutes with the right $\mathcal{C}_i$-module structures.
    Indeed, fix $\alpha\in\mathcal{C}_{i|ab}$ and two words $v_1,v_2$ of type $(c,a)$.
    Take $\tilde{\Sigma}_{v_1,\alpha}$ and $\tilde{\Sigma}_{v_2,\alpha}$ as in \eqref{eqn:sigma-rearrange}.
    Given a 2-cell $\eta:v_1\to v_2$, we then have that
    \begin{align*}
        \Psi(\eta\tensorhat\I_{[\alpha]_i})
        &=\sum_{w\in W_{cb}}\sum_{\xi\in\onb(v_1[\alpha]_i\triangleright\star)_w} \qdim(w) \Psi_{v_2,w}(\eta_{\alpha\triangleright_i\star}\xi)\Psi_{v_1,w}(\xi)^*\\
        &=\sum_{w\in W_{cb}}\sum_{\xi\in\onb((v_1\triangleright\star)\triangleleft_i\alpha)_w} \qdim(w) \Psi_{v_2,w}(\eta_{\alpha\triangleright_i\star}\tilde{\Sigma}_{v_1,\alpha}\xi)\Psi_{v_1,w}(\tilde{\Sigma}_{v_1,\alpha}\xi)^*\\
        &=\sum_{w\in W_{cb}}\sum_{\xi\in\onb((v_1\triangleright\star)\triangleleft_i\alpha)_w} \qdim(w) \Psi_{v_2,w}(\tilde{\Sigma}_{v_2,\alpha}(\eta_\star\triangleleft_i\I_\alpha)\xi)\Psi_{v_1,w}(\tilde{\Sigma}_{v_1,\alpha}\xi)^*\\
        &=\sum_{w'\in W_{ca}}\sum_{\xi'\in\onb(v_1\triangleright\star)_{w'}} \qdim(w') \Psi_{v_2,w'}(\eta_\star\xi')\Psi_{v_1,w'}(\xi')^*\otimes\I_{\Psi_i(\alpha)}\\
        &=\Psi(\eta)\otimes\I_{\Psi([\alpha]_i)}\,,
    \end{align*}
    where we used \eqref{eqn:assembly-map-right-identity} to get the fourth equality.
    This concludes the proof of the existence statement.

    The uniqueness statement in the theorem is much easier to prove (as one would expect).
    Suppose that $(g,\Psi),(g,\Psi'):\mathcal{C}\to\mathcal{D}$ are both unitary 2-functors with the stated property, i.e.\@ that there exist unitary monoidal natural transformations $\zeta^i: \Psi_i\to \Psi\Phi_i$ and $(\zeta')^i:\Psi_i\to \Psi'\Phi_i$. 
    We have to define a unitary monoidal natural transformation $\zeta:\Psi\to \Psi'$.
    Clearly $\zeta_{\emptyword}=(\Psi'^{(0)})^*\Psi^{(0)}$ is the only choice we have.
    Now suppose that we already defined $\zeta_v$ for all words of length at most $n$.
    Choose $\alpha\in\mathcal{C}_{i|ab}$ and $v'$ a word of type $(b,c)$ of length $n$.
    Then $((\zeta')^i_\alpha)^*\zeta^i_\alpha\otimes \zeta_{v'})$ is a 2-cell from $\Psi_i([\alpha]_i)\Psi(v')$ to $\Psi_i'([\alpha]_i)\Psi'(v')$.
    For $v=[\alpha]_iv'$, we can then define $\zeta_v: \Psi(v)\to \Psi'(v)$ by setting
    \[
        \zeta_v = (\Psi')^{(2)}\left[((\zeta')^i_\alpha)^*\zeta^i_\alpha\otimes \zeta_{v'}\right] (\Psi^{(2)})^*\,.
    \]
\end{proof}

As an application of the universal property, we can now show that any $C^*$-2-category satisfying the ``working definition'' of the free product is actually equivalent to the free product.
This provides a converse of sorts to Proposition~\ref{thm:poor-mans-free-product}.
\begin{proposition}
\label{thm:poor-mans-free-product-equivalence}
    Let $\mathcal{C}$ be a (strict) rigid $C^*$-2-category with 2-full sub-2-categories $(\mathcal{C}_i)_{i\in I}$ such that
    \begin{enumerate}[(i)]
        \item $\bigcup_{i\in I} B(\mathcal{C}_i)=B(\mathcal{C})$;
        \item $B(\mathcal{C}_i)\cap B(\mathcal{C}_j)=\bigcap_{i\in I} B(\mathcal{C}_i)$ for all distinct $i,j\in I$;
        \item for any choice of irreducible 1-cells $\alpha_1,\ldots,\alpha_n$ with $n\geq 0$, $\alpha_k\in \mathcal{C}_{i_k|a_kb_k}$, $a_k=b_{k-1}$, $i_k\neq i_{k+1}$ and $\alpha_k\not\cong \epsilon_{i_k|a_k}$, the composition $\alpha_1\cdots\alpha_n$ is irreducible in $\mathcal{C}_{a_1b_n}$, and all these irreducibles are distinct;
        \item tensor products of the form described in (iii) exhaust all irreducible 1-cells in $\mathcal{C}$ up to isomorphism.
    \end{enumerate}
    Writing $S=\bigcap_{i\in I} B(\mathcal{C}_i)$, the rigid $C^*$-tensor 2-category $\mathcal{C}$ together with the inclusions $\mathcal{C}_i\hookrightarrow\mathcal{C}$ satisfies the universal property of the free product of the rigid $C^*$-tensor 2-categories $(\mathcal{C}_i)_{i\in I}$ w.r.t.\@ the inclusions of $S$ into $B(\mathcal{C}_i)$ for $i\in I$.
\begin{proof}
    Throughout the proof, we identify the unit objects $\epsilon_{i|a}$ of $\mathcal{C}_{i|aa}$ with $\epsilon_a$ in $\mathcal{C}_{aa}$ for all $i\in I$ and $a\in B(\mathcal{C}_i)$.
    Let $(j_i,\iota_i)$ be the unitary 2-functor given by the inclusion of $\mathcal{C}_i$ into $\mathcal{C}$, and let $\mathcal{C}'$ be the free product constructed above.
    Denoting the associated unitary 2-embeddings $(\phi_i,\Psi_i):\mathcal{C}_i\to \mathcal{C}$, we have that $\phi_i\vert_S = \phi_j\vert_S$ by hypothesis.
    Theorem~\ref{thm:free-product-2cat-up} therefore provides a universal unitary 2-functor $(\psi,\Psi)$ from $\mathcal{C}'$ to $\mathcal{C}$.
    We claim that this is a unitary equivalence of 2-categories.

    First, we prove that $\psi$ is a bijection.
    Surjectivity is clear from condition (i).
    To prove that $\psi$ is injective, fix $x,y\in B(\mathcal{C}')$ such that $\psi(x)=\psi(y)$.
    These can be written in the form $x=\phi_i(x')$, $y=\phi_{i'}(y')$ for $i,i'\in I$ and $x'\in B(\mathcal{C}_i)$ and $y'\in B(\mathcal{C}_j)$.
    Suppose now that $\psi(x)=\psi(y)$.
    Since $\psi\phi_i=j_i$ for all $i\in I$, it follows that $x'=y'$ in $B(\mathcal{C})$.
    If $i=i'$, this immediately implies that $x=y$.
    Otherwise, condition (ii) implies that $x',y'$ lie in $S$, which also implies that $x=y$ since $\phi_i\vert_S = \phi_{i'}\vert_S$.
    In what follows, we therefore identify $B:=B(\mathcal{C}')=B(\mathcal{C})$ and treat $\psi$ as the identity.
    
    We now argue that $\Psi: \mathcal{C}'_{ab}\to \mathcal{C}_{ab}$ is fully faithful and essentially surjective everywhere.
    Faithfulness is automatic, and the essential surjectivity follows from condition (iv).
    To prove that $\Psi$ is full, it suffices to argue that $(\Psi(v),\Psi(w))\cong (v,w)$ for all $a,b\in B$, all words $v$ of type $(a,b)$ and all reduced words $w\in W_{ab}$.
    We do this by induction on the length of $v$.
    When $a=b$ and $v=\emptyword$ we have that
    \[
        (\Psi(\emptyword),\Psi(w))\cong (\epsilon_a,\Psi(w))\,.
    \]
    We already know from condition (iii) that the image of any reduced word under $\Psi$ produces an irreducible 1-cell in $\mathcal{C}$, and that all these irreducible objects are distinct.
    We conclude that both of these spaces are zero if $w$ is not the empty word, and one-dimensional otherwise.

    For the inductive step, choose $a,b,c,d\in B$, $i\in I$, $\alpha\in\mathcal{C}_{i|ab}$, $v$ a word of type $(b,c)$, $\pi\in\Irr(\mathcal{C}_{i|ad})$ and $w\in W^\ell_{i|dc}$.
    Then, by applying conditions (iii), (iv) and the induction hypothesis, we get that
    \begin{align*}
        (\Psi([\alpha]_iv), \Psi([\pi]_i:w))
        &\cong (\alpha\Psi(v),\pi\Psi(w))\\
        &\cong \bigoplus_{\substack{e\in B, w'\in W_{ec}^\ell\\ \gamma\in\Irr(\mathcal{C}_{i|be})}}
        (\Psi(v),\Psi([\gamma]_i:w'))\otimes (\alpha\Psi([\gamma]_i:w'),\pi\Psi(w))\\
        &\cong \bigoplus_{\substack{e\in B, w'\in W_{ec}^\ell\\ \gamma\in\Irr(\mathcal{C}_{i|be})}}
        (v,[\gamma]_iw')\otimes (\alpha\gamma\Psi(w'),\pi\Psi(w))\\
        &\cong \bigoplus_{\substack{e\in B, w'\in W_{ec}^\ell\\ \gamma\in\Irr(\mathcal{C}_{i|be}) \\ \pi'\in\Irr(\mathcal{C}_{i|ae})}}
        (v,[\gamma]_iw')\otimes (\alpha\gamma,\pi')\otimes (\pi'\Psi(w'),\pi\Psi(w))\,.
    \end{align*}
    It follows from condition (iii) that only the terms with $\pi'=\pi$ and $w=w'$ survive, and that the endomorphism space of $\pi\Psi(w)$ is one-dimensional.
    Combining this with the fact that $\Phi_i: \beta\mapsto [\beta]_i$ is fully faithful, we conclude that
    \begin{align*}
        (\Psi([\alpha]_iv), \Psi([\pi]_i:w))
        &\cong \bigoplus_{\gamma\in\Irr(\mathcal{C}_{i|be})}
        (v,[\gamma]_iw')\otimes (\alpha\gamma,\pi)
        \cong \bigoplus_{\gamma\in\Irr(\mathcal{C}_{i|be})}
        (v,[\gamma]_iw')\otimes ([\alpha]_i[\gamma]_i,[\pi]_i)\\
        &\cong ([\alpha]_iv,[\pi]_iw')\cong ([\alpha]_iv,[\pi_i]:w')\,.
    \end{align*}
\end{proof}
\end{proposition}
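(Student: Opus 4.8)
The strategy is to realise $\mathcal{C}$ as the abstract free product and read off the universal property from that identification. Applying Theorem~\ref{thm:free-product-2cat-up} to the family $(\mathcal{C}_i)_{i\in I}$ together with the inclusions $S\hookrightarrow B(\mathcal{C}_i)$ (which are injective since $S=\bigcap_{i}B(\mathcal{C}_i)$), one obtains a free product 2-category $\mathcal{C}'$ with unitary 2-embeddings $(\phi_i,\Phi_i):\mathcal{C}_i\to\mathcal{C}'$ satisfying $\phi_i\vert_S=\phi_j\vert_S$. The inclusions $(j_i,\iota_i):\mathcal{C}_i\hookrightarrow\mathcal{C}$ of the hypothesis likewise agree on $S$, since each $j_i$ restricts to the inclusion $S\hookrightarrow B(\mathcal{C})$. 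Hence the universal property supplies a unitary 2-functor $(\psi,\Psi):\mathcal{C}'\to\mathcal{C}$ with $(\psi\phi_i,\Psi\Phi_i)\cong(j_i,\iota_i)$, and the whole proof reduces to showing that $(\psi,\Psi)$ is a unitary 2-equivalence: once this is known, the universal property of $\mathcal{C}'$ transports verbatim to $\mathcal{C}$ along $(\psi,\Psi)$ and along any quasi-inverse.

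I would first check that $\psi$ is a bijection on 0-cells. Surjectivity is immediate from hypothesis~(i) together with $\psi\phi_i=j_i$. For injectivity, write $x=\phi_i(x')$ and $y=\phi_{i'}(y')$; if $\psi(x)=\psi(y)$ then $x'=y'$ in $B(\mathcal{C})$, and either $i=i'$, in which case $x=y$, or hypothesis~(ii) forces $x',y'\in S$, and then again $x=y$ because $\phi_i\vert_S=\phi_{i'}\vert_S$. From here on I identify $B:=B(\mathcal{C}')=B(\mathcal{C})$ and treat $\psi$ as the identity. It remains to show that every functor $\Psi:\mathcal{C}'_{ab}\to\mathcal{C}_{ab}$ is a unitary equivalence of $C^*$-categories. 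Faithfulness is automatic for a unitary 2-functor between rigid $C^*$-2-categories, and essential surjectivity follows by combining hypothesis~(iv) with Proposition~\ref{thm:poor-mans-free-product}: every irreducible 1-cell of $\mathcal{C}$ is an alternating composition of irreducible 1-cells of the $\mathcal{C}_i$, that is, up to isomorphism the image under $\Psi$ of a reduced word, and each reduced word is an irreducible object of $\mathcal{C}'_{ab}$.

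The real work lies in proving that $\Psi$ is full, and here I would argue by induction on word length. Since reduced words exhaust $\Irr(\mathcal{C}'_{ab})$ by Proposition~\ref{thm:poor-mans-free-product}, it is enough to show that $\Psi$ induces an isomorphism $(v,w)\cong(\Psi(v),\Psi(w))$ for every word $v$ and every reduced word $w$ of type $(a,b)$. The base case $v=\emptyword$ is handled by hypothesis~(iii), which says that distinct reduced words map to distinct irreducible 1-cells, so both spaces are one-dimensional when $w=\emptyword$ and zero otherwise. For the inductive step, peel off the first letter, writing $v=[\alpha]_iv'$ with $\alpha\in\mathcal{C}_{i|ab}$, and express the reduced word $w$ as $[\pi]_i:w'$ with $\pi\in\Irr(\mathcal{C}_{i|ad})$ and $w'\in W_{i|dc}^\ell$; then decompose $(\Psi([\alpha]_iv),\Psi([\pi]_i:w))$ by inserting complete sets of irreducibles, first decomposing $\Psi(v')$ over words $[\gamma]_i:w''$ with $\gamma\in\Irr(\mathcal{C}_{i|be})$ and invoking the induction hypothesis, then decomposing $\alpha\gamma$ over $\pi'\in\Irr(\mathcal{C}_{i|ae})$. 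Hypothesis~(iii) annihilates every term except $\pi'=\pi$ and $w''=w'$ (with one-dimensional endomorphism space), leaving $\bigoplus_{\gamma}(v,[\gamma]_iw')\otimes(\alpha\gamma,\pi)$; full faithfulness of $\beta\mapsto[\beta]_i$ (Proposition~\ref{thm:poor-mans-free-product}) identifies $(\alpha\gamma,\pi)$ with $([\alpha]_i[\gamma]_i,[\pi]_i)$, and reassembling the direct sum recovers $([\alpha]_iv,[\pi]_i:w')$, closing the induction. The main obstacle is the bookkeeping in this decomposition: one has to ensure that the reindexing over the 0-cells $e$, the irreducibles $\gamma$ and $\pi'$, and the tail words $w''$ collapses to exactly the intertwiner space computed inside $\mathcal{C}'$, with no spurious contributions --- and it is precisely hypothesis~(iii), the freeness of the $\mathcal{C}_i$ inside $\mathcal{C}$, that rules those out. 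With $\Psi$ shown to be unitary, faithful, full and essentially surjective on every hom-category, $(\psi,\Psi)$ is a unitary 2-equivalence, which completes the proof.
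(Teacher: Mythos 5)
Your proposal is correct and follows essentially the same route as the paper: invoke Theorem~\ref{thm:free-product-2cat-up} to obtain the universal 2-functor $(\psi,\Psi):\mathcal{C}'\to\mathcal{C}$, show $\psi$ is a bijection using (i) and (ii), and prove fullness of $\Psi$ by induction on word length, peeling off the first letter, using condition (iii) to eliminate all terms except $\pi'=\pi$, $w''=w'$, and full faithfulness of $\beta\mapsto[\beta]_i$ to identify $(\alpha\gamma,\pi)$ with $([\alpha]_i[\gamma]_i,[\pi]_i)$. The only (harmless) addition is your explicit remark that the universal property transports along the resulting 2-equivalence and a quasi-inverse, which the paper leaves implicit.
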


For rigid $C^*$-tensor categories, this reduces to the following simpler statement.
\begin{corollary}
\label{thm:tensor-cat-free-product-charact}
    Let $\mathcal{C}$ be a rigid $C^*$-tensor category with full rigid $C^*$-tensor subcategories $\mathcal{C}_i$ such that any tensor product of nontrivial irreducibles $\alpha_1,\ldots\alpha_n$ with $n\geq 0$, $\alpha_k\in\mathcal{C}_{i_k}$, $i_k\neq i_{k+1}$ remains irreducible in $\mathcal{C}$, and all these irreducibles are distinct.
    If these tensor products exhaust the irreducible objects in $\mathcal{C}$ up to isomorphism, then $\mathcal{C}$ together with the inclusions $\mathcal{C}_i\hookrightarrow \mathcal{C}$ satisfies the universal property of the free product of the rigid $C^*$-tensor categories $(\mathcal{C}_i)_{i\in I}$.
\end{corollary}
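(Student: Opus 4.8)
The plan is to obtain this as the single-0-cell special case of Proposition~\ref{thm:poor-mans-free-product-equivalence}. First I would set up the standard dictionary: a rigid $C^*$-tensor category $\mathcal{C}$ is the same thing as a rigid $C^*$-2-category with a one-point set of 0-cells $\{*\}$, with $\mathcal{C}$ placed in degree $(*,*)$; under this identification a full rigid $C^*$-tensor subcategory $\mathcal{C}_i$ becomes a 2-full sub-2-category with $B(\mathcal{C}_i)=\{*\}$, since such a subcategory is automatically closed under tensor products, conjugates, direct sums and subobjects. Invoking the fact recorded in Section~2 that every rigid $C^*$-2-category is unitarily 2-equivalent to a strict one, we may moreover assume $\mathcal{C}$ (and hence the $\mathcal{C}_i$) strict, the universal property being insensitive to unitary monoidal equivalence.

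Next I would verify the four hypotheses of Proposition~\ref{thm:poor-mans-free-product-equivalence}. Conditions (i) and (ii) are automatic because every $B(\mathcal{C}_i)$ equals the one-point set $B(\mathcal{C})$, so in particular $S:=\bigcap_{i\in I}B(\mathcal{C}_i)=\{*\}$ as well. Conditions (iii) and (iv) are word-for-word the irreducibility, distinctness and exhaustion hypotheses in the statement of the Corollary; here the case $n=0$ corresponds to the tensor unit, which is irreducible by the standing assumption that tensor units are simple. Proposition~\ref{thm:poor-mans-free-product-equivalence} then applies and yields that $\mathcal{C}$, together with the inclusions $\mathcal{C}_i\hookrightarrow\mathcal{C}$, satisfies the universal property of the free product of the 2-categories $(\mathcal{C}_i)_{i\in I}$ with respect to the unique injections of the one-point set $S$ into each $B(\mathcal{C}_i)$.

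It then remains to translate this back into tensor-category language, i.e.\@ to check that the universal property of Theorem~\ref{thm:free-product-2cat-up}, specialised to single-0-cell 2-categories with $S$ a singleton, coincides with the universal property of Theorem~\ref{thm:free-product-up}. This is the only point where a little care is needed, and it is entirely routine: when all 0-cell sets are singletons the map on 0-cells carries no information, the compatibility condition $\psi_if_i=\psi_jf_j$ is vacuous, a unitary 2-functor between one-0-cell 2-categories is the same as a unitary tensor functor, a unitary 2-embedding is the same as a fully faithful unitary tensor functor, and $\cong$ of unitary 2-functors reduces to $\cong$ of unitary tensor functors. Tracking these identifications through the conclusion of Proposition~\ref{thm:poor-mans-free-product-equivalence} produces exactly the assertion of the Corollary. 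There is thus no genuine obstacle: all the content sits in Proposition~\ref{thm:poor-mans-free-product-equivalence} and, behind it, in Theorem~\ref{thm:free-product-2cat-up}.
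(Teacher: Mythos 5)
Your proposal is correct and matches the paper's (implicit) argument: the paper gives no separate proof precisely because the corollary is the single-0-cell specialisation of Proposition~\ref{thm:poor-mans-free-product-equivalence}, with the routine translation between one-0-cell rigid $C^*$-2-categories and rigid $C^*$-tensor categories that you spell out. Your verification of conditions (i)--(iv) and of the dictionary between the universal properties of Theorem~\ref{thm:free-product-2cat-up} and Theorem~\ref{thm:free-product-up} is exactly what is needed.
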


\section{Relation to other constructions}
Throughout, we assume that the $C^*$-2-categories we deal with are strict.

\subsection{Free product of quantum groups}
For a compact quantum group $\mathbb{G}$, denote by $\Rep_f(\mathbb{G})$ the rigid $C^*$-tensor category of finite-dimensional unitary representations of $\mathbb{G}$.
Additionally, we denote the fibre functor $\Rep_f(\mathbb{G})\to \Hilb_f$ mapping a representation of $\mathbb{G}$ to its carrier space by $F_{\mathbb{G}}$.
Combining Proposition~\ref{thm:free-product-cqg} with Woronowicz' Tannaka--Krein duality theorem \cite{woron-tkdual}, we recover a purely categorical construction of the free product of compact quantum groups.
\begin{proposition}
\label{thm:free-product-cqg}
    Let $\mathbb{G}$ and $\mathbb{H}$ be compact quantum groups and let $\mathbb{G}*\mathbb{H}$ be their free product in the sense of Wang \cite{wang-free-products}.
    Then $\Rep_f(\mathbb{G}*\mathbb{H})$ satisfies the universal property of the free product of $\Rep_f(\mathbb{G})$ and $\Rep_f(\mathbb{H})$.
    Moreover, $F_{\mathbb{G}*\mathbb{H}}$ is naturally unitarily monoidally equivalent to the fibre functor on $\Rep_f(\mathbb{G})$ induced from $F_{\mathbb{G}}$ and $F_{\mathbb{H}}$ by the universal property of the free product.
\begin{proof}
    This immediately follows from Woronowicz' Tannaka--Krein duality (see e.g.\@ the formulation in \cite[Theorem~2.3.2]{neshveyev-tuset}) after applying \cite[Theorem~3.10]{wang-free-products} and Corollary~\ref{thm:tensor-cat-free-product-charact}.
\end{proof}
\end{proposition}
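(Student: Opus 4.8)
The plan is to deduce the first assertion directly from Corollary~\ref{thm:tensor-cat-free-product-charact}, using Wang's structure theorem to verify its hypotheses for $\Rep_f(\mathbb{G}*\mathbb{H})$, and to read off the statement about fibre functors from the uniqueness clause of that universal property. Woronowicz' Tannaka--Krein duality enters only at the end, to rephrase the conclusion as an honest reconstruction of the compact quantum group $\mathbb{G}*\mathbb{H}$ from the representation categories of $\mathbb{G}$ and $\mathbb{H}$ alone.

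First I would recall the relevant part of \cite[Theorem~3.10]{wang-free-products} together with the surrounding construction. The quantum groups $\mathbb{G}$ and $\mathbb{H}$ are quantum subgroups of $\mathbb{G}*\mathbb{H}$, so that inflation along the canonical inclusions defines unitary tensor functors $\Phi_{\mathbb{G}}:\Rep_f(\mathbb{G})\to\Rep_f(\mathbb{G}*\mathbb{H})$ and $\Phi_{\mathbb{H}}:\Rep_f(\mathbb{H})\to\Rep_f(\mathbb{G}*\mathbb{H})$. Wang's theorem asserts that every alternating tensor product of nontrivial irreducible representations taken from the two factors is again irreducible, that distinct such words yield inequivalent representations, and that these exhaust $\Irr(\Rep_f(\mathbb{G}*\mathbb{H}))$ up to isomorphism. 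Since inflation sends irreducibles to irreducibles and $\Rep_f(\mathbb{G}*\mathbb{H})$ is semisimple, $\Phi_{\mathbb{G}}$ and $\Phi_{\mathbb{H}}$ are automatically fully faithful, so $\Rep_f(\mathbb{G})$ and $\Rep_f(\mathbb{H})$ sit inside $\Rep_f(\mathbb{G}*\mathbb{H})$ as full $C^*$-tensor subcategories meeting precisely the hypotheses of Corollary~\ref{thm:tensor-cat-free-product-charact}. That corollary then gives that $\Rep_f(\mathbb{G}*\mathbb{H})$, together with $\Phi_{\mathbb{G}}$ and $\Phi_{\mathbb{H}}$, satisfies the universal property of the free product of $\Rep_f(\mathbb{G})$ and $\Rep_f(\mathbb{H})$.

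For the fibre functors I would exploit that inflation alters neither the carrier space nor the monoidal coherence data of a representation, so that $F_{\mathbb{G}*\mathbb{H}}\circ\Phi_{\mathbb{G}}\cong F_{\mathbb{G}}$ and $F_{\mathbb{G}*\mathbb{H}}\circ\Phi_{\mathbb{H}}\cong F_{\mathbb{H}}$ as unitary tensor functors. Because $\Hilb_f$ is itself a rigid $C^*$-tensor category, applying the universal property just established to $\mathcal{D}=\Hilb_f$ with the functors $F_{\mathbb{G}}$ and $F_{\mathbb{H}}$ produces a unitary tensor functor $F:\Rep_f(\mathbb{G}*\mathbb{H})\to\Hilb_f$, namely the fibre functor induced by the universal property, satisfying $F\circ\Phi_{\mathbb{G}}\cong F_{\mathbb{G}}$ and $F\circ\Phi_{\mathbb{H}}\cong F_{\mathbb{H}}$. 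Since $F_{\mathbb{G}*\mathbb{H}}$ enjoys the same property, the uniqueness clause forces $F\cong F_{\mathbb{G}*\mathbb{H}}$ as unitary tensor functors; this is the second assertion. Composing the monoidal equivalence between $\Rep_f(\mathbb{G}*\mathbb{H})$ and the abstract free product furnished by Theorem~\ref{thm:free-product-up} (compatible with the embeddings and, by uniqueness, with the induced fibre functors) with Woronowicz' reconstruction theorem \cite{woron-tkdual} (in the form of \cite[Theorem~2.3.2]{neshveyev-tuset}) then shows that the compact quantum group reconstructed from the free product of $\Rep_f(\mathbb{G})$ and $\Rep_f(\mathbb{H})$, equipped with its induced fibre functor, is exactly $\mathbb{G}*\mathbb{H}$, which is the purely categorical construction advertised before the statement.

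I expect the work to be organisational rather than mathematical. The one point needing care is matching the output of \cite[Theorem~3.10]{wang-free-products} --- usually phrased in terms of corepresentations of the free-product $C^*$-algebra --- to the precise hypotheses of Corollary~\ref{thm:tensor-cat-free-product-charact}, and checking that the inflation functors are genuinely fully faithful tensor embeddings, which (as noted) is forced by irreducibility of inflated irreducibles together with semisimplicity. The compatibility of $F_{\mathbb{G}*\mathbb{H}}$ with $F_{\mathbb{G}}$ and $F_{\mathbb{H}}$ is immediate from the definition of inflation, and no coherence computation or norm estimate is involved --- which is precisely why the proposition drops out at once from the two cited results.
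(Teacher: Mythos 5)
Your proposal is correct and follows essentially the same route as the paper, whose proof is exactly the citation chain you unpack: Wang's Theorem~3.10 verifies the hypotheses of Corollary~\ref{thm:tensor-cat-free-product-charact} for the inflation embeddings, the uniqueness clause of the resulting universal property (applied with $\mathcal{D}=\Hilb_f$) identifies the induced fibre functor with $F_{\mathbb{G}*\mathbb{H}}$, and Tannaka--Krein duality supplies the reconstruction interpretation. The added detail on full faithfulness of the inflation functors and the compatibility $F_{\mathbb{G}*\mathbb{H}}\circ\Phi_{\mathbb{G}}\cong F_{\mathbb{G}}$, $F_{\mathbb{G}*\mathbb{H}}\circ\Phi_{\mathbb{H}}\cong F_{\mathbb{H}}$ is exactly the bookkeeping the paper leaves implicit.
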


\subsection{Free composition and the free product of planar algebras}

We now return to the setting of pointed $C^*$-2-categories of section~\ref{sec:pointed-cats}.
In order to define the \textit{free product} or \textit{free composition} of pointed $C^*$-2-categories $\mathcal{P}_1=(\mathcal{C}_1,a_1,b_2,u_1)$ and $\mathcal{P}_2=(\mathcal{C}_2,a_1,b_2,u_2)$, we compose $u_1$ and $u_2$ inside an appropriate free product of $\mathcal{C}_1$ and $\mathcal{C}_2$.
Here, we need the extra flexibility in Theorem~\ref{thm:free-product-2cat-up} that allows ``amalgamation'' at the level of the 0-cells: the free product we need is \textit{not} the one given by identifying $a_1=a_2$ and $b_1=b_2$ in Theorem~\ref{thm:free-product-2cat-up}.
Indeed, in that setting, there is no canonical way to compose $u_1$ and $u_2$.

Instead, we identify $b_1$ with $a_2$.
Formally, we take $S=\{*\}$ with $f_1(*)=b_1$ and $f_2(*)=a_2$ in Theorem~\ref{thm:free-product-2cat-up}, and denote the resulting free product 2-category by $\mathcal{C}$.
To simplify the notation going forward, we will simply put $B(\mathcal{C})=\{a,b,c\}$ and view $B(\mathcal{C}_1)$ and $B(\mathcal{C}_2)$ as subsets of $B(\mathcal{C})$ by identifying $a_1=a$, $b_1=a_2=b$ and $b_2=c$.
The composition of $u_1$ and $u_2$ is meaningful in $\mathcal{C}$, and produces an object $u_1u_2\in\mathcal{C}_{ac}$.

Denoting the $C^*$-2-category obtained from $\mathcal{C}$ by forgetting the 0-cell $b$ by $\mathcal{C}'$, we obtain a pointed $C^*$-2-category $\mathcal{P}_1*\mathcal{P}_2=(\mathcal{C}',a,c,u_1u_2)$ that encodes the free product planar algebra.

\begin{remark}
    The resulting object $\mathcal{P}_1*\mathcal{P}_2$ is \textit{not} the coproduct in the category of pointed $C^*$-2-categories in any meaningful sense.
    In fact, there are no canonical 2-embeddings of $\mathcal{P}_1$ and $\mathcal{P}_2$ into $\mathcal{P}_1*\mathcal{P}_2$ and the operation is asymmetric in the factors, so the term ``free product'' may cause undue confusion.
    The terminology ``free composition'' of \cite{bisch-jones-interm-subfactors} is perhaps more apt; see also Remark~\ref{rem:free-composition}.
\end{remark}

\begin{remark}
\label{rem:morita}
The 0-cell $b$ we left out is still interesting from the point of view of representation theory.
Indeed, the rigid $C^*$-tensor category of 1-cells $\mathcal{C}_{bb}$ is a free product of $\mathcal{C}_{1|bb}$ and $\mathcal{C}_{2|bb}$.
This means that the free product of $\mathcal{C}_{1|bb}$ and $\mathcal{C}_{2|bb}$ as rigid $C^*$-tensor categories lies in the same Morita equivalence class as $\mathcal{C}'_{aa}=\mathcal{C}_{aa}$ and $\mathcal{C}'_{cc}=\mathcal{C}_{cc}$.
In particular, the respective unitary representation theories of these rigid $C^*$-tensor categories are all equivalent \cite{pv-repr-subfactors,neshveyev-yamashita,ghosh-jones,psv-cohom}.
\end{remark}

\begin{remark}
\label{rem:free-composition}
At the level of subfactors, this operation on pointed $C^*$-2-categories corresponds to the free composition in the sense of \cite{bisch-jones-interm-subfactors}.
More precisely, consider a tower of finite-index subfactors $N\subset M\subset P$ of type $\mathrm{II}_1$, and let $\mathcal{P}_{N\subset M}$, $\mathcal{P}_{M\subset P}$ be the associated pointed $C^*$-2-categories put forward in Example~\ref{exa:subfactor}.
Then $\mathcal{P}_{N\subset M}*\mathcal{P}_{M\subset P}$ can be viewed as the standard invariant of a free composition of $N\subset M$ and $M\subset P$.
\end{remark}

\subsection{Free wreath products of compact quantum groups}
In this section, we explicitly need to allow $C^*$-2-categories with nonsimple tensor units in some cases.
The definition of a pointed $C^*$-category with reducible tensor units and the associated morphisms remains the same.
The free product still is only defined for $C^*$-2-categories with irreducible tensor units, but the universal property of Theorem~\ref{thm:free-product-2cat-up} remains true if the target $C^*$-2-category $\mathcal{D}$ has reducible tensor units.

In this language, the results \cite[Theorem~B]{tarrago-wahl} and \cite[Theorem~6.4.3]{jonas-phd} on the free wreath product of compact quantum groups also take on a very appealing form.
Consider a finite-dimensional $C^*$-algebra $A$ equipped with its Markov\footnote{The Markov trace on $A$ is the unique trace $\tau_A$ on $A$ such that $\tau_A(z)=\dim(zA)/\dim(A)$ for every central projection $z$ in $A$ (see \cite[\S~6]{banica-kac-subfactors} for other characterisations).} trace $\tau_A$.
Then the planar algebra associated with $A$ corresponds to the pointed $C^*$-2-category $\mathcal{P}^A=(\mathcal{C}^A,\CC,A,A)$ where the 1-cells in $\mathcal{C}^A$ are given by $\mathcal{C}^A_{a-b}=\HilbBimod_{f,a-b}$\footnote{For finite-dimensional algebras $A$ and $B$ the bimodule category $\HilbBimod_{f,A-B}$ simply consists of all Hilbert bimodules that are finite-dimensional as Hilbert spaces.} for $a,b\in\{\CC,A\}$.
In particular $\mathcal{C}^A_{\CC-\CC}=\Hilb_f$.
The distinguished object is the algebra $A$ itself, viewed as a 1-cell in $\mathcal{C}^A_{\CC-A}$.
Note that the tensor unit of $\mathcal{C}^A_{A-A}$ is simple if and only if $A$ has trivial centre (i.e.\@ $A\cong M_n(\CC)$).

There is a correspondence between (sufficiently nice) actions of quantum groups on $A$ and morphisms of pointed $C^*$-2-categories to $\mathcal{P}^A$.
This result is due to Banica and Tarrago--Wahl \cite{banica-autq,banica-coaction-planar,tarrago-wahl}.
The planar algebra formulation of \cite[Theorem~A]{tarrago-wahl} can be restated in categorical language as follows (see \cite[Remark~6.1.7]{jonas-phd}).
\begin{theorem}[Banica, Tarrago--Wahl]
\label{thm:tarrago-wahl-correspondence}
    Let $A$ be a finite-dimensional $C^*$-algebra equipped with its Markov trace $\tau_A$.  
    Then
    \begin{enumerate}[(1)]
        \item for every Kac-type compact quantum group $\mathbb{G}$ and every centrally ergodic $\tau_A$-preserving action $\alpha:\mathbb{G}\curvearrowright A$, there exists a pointed $C^*$-2-category $\mathcal{P}(\alpha)=(\mathcal{C}(\alpha),a,b,u^\alpha)$ with irreducible tensor units, together with a dimension-preserving morphism $\Phi_\alpha:\mathcal{P}(\alpha)\to\mathcal{P}^A$ of pointed $C^*$-2-categories;
        \item for every pair $(\mathcal{P},\Phi)$ consisting of a pointed $C^*$-2-category $\mathcal{P}$ with irreducible tensor units and a dimension-preserving morphism $\Phi:\mathcal{P}\to\mathcal{P}^A$, there exists a Kac-type compact quantum group $\mathbb{G}$ and a centrally ergodic $\tau_A$-preserving action $\alpha:\mathbb{G}\curvearrowright A$ such that $(\mathcal{P}(\alpha),\Phi_\alpha)$ and $(\mathcal{P},\Phi)$ are conjugate.
    \end{enumerate}

    \noindent Given actions $\alpha:\mathbb{G}\curvearrowright A$ and $\beta:\mathbb{H}\curvearrowright A$ of the type described in (1), we additionally have the following:
    \begin{enumerate}[(a)]
        \item $\alpha$ is faithful if and only if $\mathcal{P}(\alpha)$ is nondegenerate;
        \item the pairs $(\mathcal{P}(\alpha),\Phi_\alpha)$ and $(\mathcal{P}(\beta),\Phi_\beta)$ are conjugate if and only if the actions $\alpha$ and $\beta$ are conjugate.
    \end{enumerate}
\end{theorem}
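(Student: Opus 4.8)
The plan is to obtain the theorem by transcribing the results of Banica and Tarrago--Wahl, whose natural setting is the theory of (subfactor) planar algebras, into the language of pointed $C^*$-2-categories. No new construction is required beyond the dictionary already alluded to in \cite[Remark~2.1]{arano-vaes}; everything reduces to matching up definitions.

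First I would pin down that dictionary in the form needed here. Following \cite{popa-lambda-lattices} and \cite[Remark~2.1]{arano-vaes}, assigning to a pointed $C^*$-2-category $(\mathcal{C},a,b,u)$ the box spaces $(\epsilon_a,(u\bar{u})^{\otimes n})$, $(\epsilon_b,(\bar{u}u)^{\otimes n})$, and the odd box spaces built from $u(\bar{u}u)^{\otimes n}$, gives an equivalence between pointed $C^*$-2-categories --- with the mild proviso, needed here, that the tensor unit of one distinguished 0-cell be allowed to be reducible --- and planar algebras; and this equivalence carries dimension-preserving morphisms to Markov-trace-preserving planar-algebra homomorphisms, and the relation ``conjugate'' introduced in Section~\ref{sec:pointed-cats} to the equivalence relation on planar-algebra morphisms used in \cite{tarrago-wahl, jonas-phd}. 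I would then record that $\mathcal{P}^A=(\mathcal{C}^A,\CC,A,A)$ corresponds under this dictionary to the planar algebra $\mathcal{P}_A$ associated with $(A,\tau_A)$ in \cite{banica-coaction-planar, tarrago-wahl}: the point is that $\mathcal{C}^A_{\CC-\CC}=\Hilb_f$ and $\mathcal{C}^A_{A-A}=\HilbBimod_{f,A-A}$ with distinguished object $A\in\mathcal{C}^A_{\CC-A}$ reproduce the box spaces of $\mathcal{P}_A$, that $\qdim(A)=\dim(A)$ evaluated with $\tau_A$ is the modulus, and that $\tau_A$ is the categorical trace. In particular a dimension-preserving morphism into $\mathcal{P}^A$ is precisely a $\tau_A$-preserving morphism of planar algebras into $\mathcal{P}_A$.

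With this established, parts (1) and (2) are transcriptions of \cite[Theorem~A]{tarrago-wahl} (the categorical restatement is \cite[Remark~6.1.7]{jonas-phd}): a centrally ergodic $\tau_A$-preserving action $\alpha\colon\mathbb{G}\curvearrowright A$ of a Kac-type compact quantum group has a ``representation planar algebra'' equipped with a Markov-trace-preserving morphism into $\mathcal{P}_A$, and conversely every such morphism arises, up to the relevant equivalence, from an essentially unique such action. Here central ergodicity is exactly what forces the relevant box spaces to be one-dimensional, i.e.\@ makes the tensor units of the associated 2-category $\mathcal{P}(\alpha)$ simple, and translating the equivalence of planar-algebra morphisms back through the dictionary yields the conjugacy asserted in (2). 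Item (a) is the statement that $\alpha$ is faithful iff its representation planar algebra is generated by its lowest box spaces, which under the dictionary is nondegeneracy of $\mathcal{P}(\alpha)$; item (b) is the remaining, ``injective on conjugacy classes'', half of the same theorem. The Kac-type hypothesis enters throughout, since it is what makes the ambient fibre functors unitary and the traces tracial, so that the $C^*$/unitary version of the dictionary applies without change.

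The hard part will be the first step: checking carefully that ``conjugate'' in the sense of the definition in Section~\ref{sec:pointed-cats} --- an isomorphism $(G,p)$ of pointed $C^*$-2-categories together with an isomorphism $\eta$ of unitary 2-functors making the square in that definition commute --- corresponds exactly, and not merely morally, to the equivalence of morphisms of planar algebras used in \cite{tarrago-wahl} and \cite{jonas-phd}, and likewise that the extension of the correspondence to a target with a reducible tensor unit (announced at the start of this subsection) causes no trouble. Once that bookkeeping is in place, each of (1), (2), (a), (b) follows by reading off the corresponding clause of \cite[Theorem~A]{tarrago-wahl}.
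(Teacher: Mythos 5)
Your proposal matches the paper's treatment: the paper gives no independent proof of this theorem, presenting it precisely as a categorical restatement of \cite[Theorem~A]{tarrago-wahl} (and Banica's results) via the dictionary between pointed $C^*$-2-categories and subfactor planar algebras from \cite[Remark~2.1]{arano-vaes} and \cite[Remark~6.1.7]{jonas-phd}, with the same identifications (central ergodicity $\leftrightarrow$ simple tensor units, dimension-preserving morphisms into $\mathcal{P}^A$ $\leftrightarrow$ trace-preserving planar-algebra morphisms, faithfulness $\leftrightarrow$ nondegeneracy, conjugacy $\leftrightarrow$ equivalence of morphisms). So your transcription plan, including the care about the reducible tensor unit at the 0-cell $A$, is essentially the paper's own approach.
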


The connection with Woronowicz' Tannaka--Krein duality theorem is immediately apparent in this language: if $\mathcal{P}=(\mathcal{C},a,b,u)$ is a pointed $C^*$-2-category together with a morphism $\Phi: \mathcal{P}\to \mathcal{P}^A$, looking at $\Phi$ on $\mathcal{C}_{aa}$ yields a unitary tensor functor to $\mathcal{C}^A_{\CC-\CC}=\Hilb_f$.
This is the fibre functor realising $\mathcal{C}_{aa}$ as the representation category of a quantum group $\mathbb{G}$.
The object $u\bar{u}$ in $\mathcal{C}_{aa}$ can then be viewed as a representation of $\mathbb{G}$ with carrier space $\Phi(u\bar{u})$.
Through the identifications
\[
    \Phi(u\bar{u}) \cong \Phi(u)\otimes_A \overline{\Phi(u)} \cong A\otimes_A A \cong A\,,
\]
this produces an action of $\mathbb{G}$ on $A$ with the desired properties (see also \cite[Remark~6.1.7]{jonas-phd}).

Under this correspondence, the action of the quantum automorphism group of $A$ (w.r.t.\@ the Markov trace $\tau_A$) is identified with the (essentially) canonical morphism into $\mathcal{P}^A$ from the pointed Temperley--Lieb--Jones 2-category with parameter $\delta=\sqrt{n}$ (see Example~\ref{exa:pointed-tlj}).

In this categorical language, \cite[Theorem~6.4.3]{jonas-phd} takes the following form.
\begin{theorem}[Tarrago--Wahl]
\label{thm:free-composition-wreath-product-ident}
    Let $A$, $B$ be finite-dimensional $C^*$-algebras and $\mathbb{F}$, $\mathbb{H}$ compact matrix quantum groups of Kac type.
    Consider faithful centrally ergodic $\tau_A$-preserving actions $\alpha:\mathbb{F}\curvearrowright A$ and $\beta:\mathbb{H}\curvearrowright B$.
    Then there is an associated action $\beta\wr_*\alpha$ of the free wreath product $\mathbb{G}=\mathbb{H}\wr_*\mathbb{F}$ on $A\otimes B$.
    This action is also faithful, centrally ergodic and $\tau_A$-preserving, and there is an isomorphism of pointed $C^*$-2-categories
    \begin{equation}
    \label{eqn:free-composition-wreath-product-ident}
         \mathcal{P}(\beta\wr_*\alpha) \cong \mathcal{P}(\alpha) * \mathcal{P}(\beta).
    \end{equation}
\end{theorem}
If $A$ is commutative, this reduces to \cite[Theorem~B]{tarrago-wahl}.

The isomorphism \eqref{eqn:free-composition-wreath-product-ident} conjugates the morphism of pointed $C^*$-2-categories $\Phi_{\beta\wr_*\alpha}:\mathcal{P}(\beta\wr_*\alpha)\to \mathcal{P}^{A\otimes B}$ into another morphism $\Phi:\mathcal{P}(\alpha) * \mathcal{P}(\beta)\to \mathcal{P}^{A\otimes B}$.
Using the universal property of the free product of $C^*$-2-categories, this $\Phi$ has a very concrete interpretation.
As before, write $\mathcal{P}(\alpha)=(\mathcal{C}(\alpha),a,b,u^\alpha)$ and $\mathcal{P}(\beta)=(\mathcal{C}(\beta),b,c,u^\beta)$.
Consider the $C^*$-2-category $\mathcal{D}$ with 0-cells $\{\CC,A,A\otimes B\}$, and 1-cells given by $\mathcal{D}_{a-b}=\HilbBimod_{f,a-b}$.
Then clearly $\mathcal{C}^A$ and $\mathcal{C}^{A\otimes B}$ embed into $\mathcal{D}$ in a natural way.
Similarly, $\mathcal{C}^B$ also embeds into $\mathcal{D}$, by applying the amplification functor $A\otimes-$ to objects in $\mathcal{C}^B$ (compare \cite[p.~178]{jonas-phd}).
Composing these with $\Phi_\alpha$ and $\Phi_\beta$, we obtain dimension-preserving unitary 2-functors from $\mathcal{C}(\alpha)$ and $\mathcal{C}(\beta)$ into $\mathcal{D}$.
The universal property of the free product then yields a unitary 2-functor $\Phi: \mathcal{C}(\alpha)*\mathcal{C}(\beta)\to \mathcal{D}$.
Since there are canonical unitary isomorphisms of right $A\otimes B$-modules
\[
    \Phi(u^\alpha u^\beta) \cong \Phi_\alpha(u^\alpha) \Phi_\beta(u^\beta) \cong A\otimes_A (A\otimes B) \cong A\otimes B\,,
\]
this 2-functor $\Phi$ can be viewed as a morphism of pointed 2-categories from $\mathcal{P}(\alpha)*\mathcal{P}(\beta)$ to $\mathcal{P}^{A\otimes B}$.
Stated in these terms, \cite[Theorem~6.4.3]{jonas-phd} asserts that this morphism is conjugate to $\Phi_{\beta\wr_*\alpha}: \mathcal{P}(\beta\wr_*\alpha)\to \mathcal{P}^{A\otimes B}$.

The corollary stated below is implicit in \cite{tarrago-wahl} and was also used as part of \cite[Theorem~5.2(iv)]{krvv}.
Based on the discussion in this section and Remark~\ref{rem:morita}, we can give a short, conceptual proof.

\begin{corollary}
    Let $A$ be a finite-dimensional $C^*$-algebra, equipped with its Markov trace $\tau_A$, and let $\mathbb{H}$ be a compact matrix quantum group of Kac type.
    Additionally, let $\mathbb{F}$ be a Kac-type quantum group together with a faithful, centrally ergodic and $\tau_A$-preserving action $\alpha:\mathbb{F}\curvearrowright A$.
    Put $\mathbb{G}=\mathbb{H}\wr_*\mathbb{F}$.
    Then the rigid $C^*$-tensor category $\Rep_f(\mathbb{G})$ is Morita equivalent to a free product of the form $\Rep_f(\mathbb{H})*\mathcal{C}$, where $\mathcal{C}$ is a rigid $C^*$-tensor category that is Morita equivalent to $\Rep_f(\mathbb{F})$.
\begin{proof}
    Fix a faithful, trace-preserving and centrally ergodic action $\beta$ of $\mathbb{H}$ on some finite-dimensional $C^*$-algebra $B$---such an action exists by \cite[Example~2.4]{tarrago-wahl}.

    Write $\mathcal{P}(\alpha)=(\mathcal{C}(\alpha),a,b,u^\alpha)$ and $\mathcal{P}(\beta)=(\mathcal{C}(\beta),b,c,u^\beta)$.
    As explained above, we may identify $\mathcal{P}(\beta\wr_*\alpha)$ with the free composition $\mathcal{P}(\alpha)*\mathcal{P}(\beta)$.
    In particular, this implies that $\Rep_f(\mathbb{G})$ is unitarily monoidally equivalent to $(\mathcal{C}(\alpha)*\mathcal{C}(\beta))_{aa}$, and therefore Morita equivalent to $(\mathcal{C}(\alpha)*\mathcal{C}(\beta))_{bb}$.
    At the same time, Remark~\ref{rem:morita} tells us that $(\mathcal{C}(\alpha)*\mathcal{C}(\beta))_{bb}\cong \mathcal{C}(\alpha)_{bb}*\mathcal{C}(\beta)_{bb}$ as rigid $C^*$-tensor categories.
    Moreover, $\mathcal{C}(\beta)_{bb}\cong\Rep_f(\mathbb{H})$, while $\mathcal{C}(\alpha)_{bb}$ is Morita equivalent to $\mathcal{C}(\alpha)_{aa}\cong\Rep_f(\mathbb{F})$.
    This proves the claim.
\end{proof}
\end{corollary}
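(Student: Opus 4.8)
The plan is to push everything through the free-composition identification of Theorem~\ref{thm:free-composition-wreath-product-ident} and then extract the rigid $C^*$-tensor category sitting at the ``middle'' $0$-cell of the relevant free product, exactly as in Remark~\ref{rem:morita}.

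First I would fix an auxiliary finite-dimensional $C^*$-algebra $B$ carrying a faithful, centrally ergodic and $\tau_B$-preserving action $\beta\colon\mathbb{H}\curvearrowright B$; such a $B$ and $\beta$ exist by \cite[Example~2.4]{tarrago-wahl}. (Since $\alpha$ is a faithful action of $\mathbb{F}$ on a finite-dimensional $C^*$-algebra, $\mathbb{F}$ is automatically a compact matrix quantum group, so Theorem~\ref{thm:free-composition-wreath-product-ident} applies.) Writing $\mathcal{P}(\alpha)=(\mathcal{C}(\alpha),a,b,u^\alpha)$ and $\mathcal{P}(\beta)=(\mathcal{C}(\beta),b,c,u^\beta)$ with the $0$-cells amalgamated as in the construction of the free composition of pointed $C^*$-2-categories discussed above, Theorem~\ref{thm:free-composition-wreath-product-ident} provides an isomorphism of pointed $C^*$-2-categories $\mathcal{P}(\beta\wr_*\alpha)\cong\mathcal{P}(\alpha)*\mathcal{P}(\beta)$, where $\mathbb{G}=\mathbb{H}\wr_*\mathbb{F}$ acts on $A\otimes B$ via $\beta\wr_*\alpha$ and the right-hand side is $(\mathcal{C}',a,c,u^\alpha u^\beta)$, with $\mathcal{C}'$ obtained from the free product $C^*$-2-category $\mathcal{C}(\alpha)*\mathcal{C}(\beta)$ on the $0$-cells $\{a,b,c\}$ by forgetting the $0$-cell $b$.

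Next I would read off the relevant representation categories using the fibre-functor interpretation recorded after Theorem~\ref{thm:tarrago-wahl-correspondence}. Restricting $\Phi_{\beta\wr_*\alpha}$ to the $(a,a)$-corner identifies $\mathcal{C}(\beta\wr_*\alpha)_{aa}$, and hence $\mathcal{C}'_{aa}=(\mathcal{C}(\alpha)*\mathcal{C}(\beta))_{aa}$ (forgetting the $0$-cell $b$ does not affect the $(a,a)$-corner), with $\Rep_f(\mathbb{G})$; in the same way $\mathcal{C}(\alpha)_{aa}\cong\Rep_f(\mathbb{F})$ and $\mathcal{C}(\beta)_{bb}\cong\Rep_f(\mathbb{H})$ as rigid $C^*$-tensor categories. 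Remark~\ref{rem:morita}, applied to $\mathcal{C}(\alpha)*\mathcal{C}(\beta)$, gives on the one hand that the middle corner $(\mathcal{C}(\alpha)*\mathcal{C}(\beta))_{bb}$ is Morita equivalent to $(\mathcal{C}(\alpha)*\mathcal{C}(\beta))_{aa}\cong\Rep_f(\mathbb{G})$, and on the other hand that $(\mathcal{C}(\alpha)*\mathcal{C}(\beta))_{bb}\cong\mathcal{C}(\alpha)_{bb}*\mathcal{C}(\beta)_{bb}$ as rigid $C^*$-tensor categories. Since $\alpha$ is faithful, $\mathcal{P}(\alpha)$ is nondegenerate by Theorem~\ref{thm:tarrago-wahl-correspondence}(a), so $u^\alpha$ is a generating $1$-cell linking the two $0$-cells of $\mathcal{C}(\alpha)$, and therefore $\mathcal{C}(\alpha)_{bb}$ is Morita equivalent to $\mathcal{C}(\alpha)_{aa}\cong\Rep_f(\mathbb{F})$ (just as the two standard invariants of an extremal finite-index subfactor are Morita equivalent; cf.\ Example~\ref{exa:subfactor}). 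Setting $\mathcal{C}=\mathcal{C}(\alpha)_{bb}$ and using that the free product of rigid $C^*$-tensor categories is symmetric in its factors and respects unitary monoidal equivalence (both immediate from the universal property of Theorem~\ref{thm:free-product-up}), we conclude that $\Rep_f(\mathbb{G})$ is Morita equivalent to $(\mathcal{C}(\alpha)*\mathcal{C}(\beta))_{bb}\cong\mathcal{C}(\beta)_{bb}*\mathcal{C}(\alpha)_{bb}\cong\Rep_f(\mathbb{H})*\mathcal{C}$, with $\mathcal{C}$ Morita equivalent to $\Rep_f(\mathbb{F})$, which is the claim.

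Each step is short, and I expect the only real difficulty to be bookkeeping: keeping track of which of the two $0$-cells of each pointed $C^*$-2-category carries which representation category, and checking that passing to $\mathcal{C}'$ (i.e.\ forgetting the middle $0$-cell) leaves the $(a,a)$-corner — and hence the Morita class of $\Rep_f(\mathbb{G})$ — unchanged, so that the Morita equivalence supplied by Remark~\ref{rem:morita} really can be transported back to $\Rep_f(\mathbb{G})$. The single genuinely external ingredient is Theorem~\ref{thm:free-composition-wreath-product-ident} itself (\cite[Theorem~6.4.3]{jonas-phd}); granting that, the corollary is a formal consequence of the free-product machinery developed in this note.
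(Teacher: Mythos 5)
Your proof is correct and takes essentially the same route as the paper: fix an auxiliary action $\beta$ of $\mathbb{H}$ via \cite[Example~2.4]{tarrago-wahl}, identify $\mathcal{P}(\beta\wr_*\alpha)$ with $\mathcal{P}(\alpha)*\mathcal{P}(\beta)$ through Theorem~\ref{thm:free-composition-wreath-product-ident}, and then pass to the middle $0$-cell $b$ using Remark~\ref{rem:morita} together with $\mathcal{C}(\beta)_{bb}\cong\Rep_f(\mathbb{H})$ and the Morita equivalence $\mathcal{C}(\alpha)_{bb}\sim\mathcal{C}(\alpha)_{aa}\cong\Rep_f(\mathbb{F})$. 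The additional points you spell out (invariance of the $(a,a)$-corner under forgetting $b$, nondegeneracy of $\mathcal{P}(\alpha)$ behind the Morita equivalence of the corners of $\mathcal{C}(\alpha)$, and symmetry of the free product) are harmless elaborations of steps the paper leaves implicit.
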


The appeal of this corollary stems from the fact that many representation-theoretic properties and invariants of rigid $C^*$-tensor categories are preserved under Morita equivalence.
In this way, one can transfer results about free products to free wreath products (see e.g.\@ \cite[Corollary~D]{tarrago-wahl} and \cite[Theorem~5.2(iv)]{krvv}).

\providecommand{\bysame}{\leavevmode\hbox to3em{\hrulefill}\thinspace}
\providecommand{\href}[2]{#2}
\setlength{\bibsep}{0.0pt}

\end{document}